\documentclass[11pt]{amsart} \textwidth=14.5cm \oddsidemargin=1cm
\usepackage{amsmath, amstext, amsbsy, amssymb, amscd, mathrsfs}
\usepackage{amsmath}
\usepackage{amsxtra}
\usepackage{amscd}
\usepackage{amsthm}
\usepackage{amsfonts}
\usepackage{amssymb}
\usepackage{eucal}
\usepackage{color}
\usepackage[all]{xy}
\usepackage{bbding}
\usepackage{pmgraph}
\usepackage{stmaryrd}
\usepackage{graphicx}
\usepackage{bm}
\usepackage[enableskew,vcentermath]{youngtab}

\newtheorem{theorem}{Theorem}[section]

\newtheorem{lemma}[theorem]{Lemma}
\newtheorem{proposition}[theorem]{Proposition}
\newtheorem{corollary}[theorem]{Corollary}

\theoremstyle{definition}
\newtheorem{definition}[theorem]{Definition}

\theoremstyle{remark}
\newtheorem{remark}[theorem]{Remark}

\definecolor{A}{rgb}{.75,1,.75}

\numberwithin{equation}{section}

\newcommand{\C}{\mathbb C}

\newcommand{\HCa}{{\mathcal{H}_{n,\bf A}^c}}

\newcommand{\ga}{\gamma}

\newcommand{\la}{\lambda}

\newcommand{\mc}{\mathcal}

\newcommand{\End}{{\rm End}}
\newcommand{\Hom}{{\rm Hom}}

\newcommand{\mHR}{\mathscr{H}_{n,R}(q)}
\newcommand{\mHA}{\mathscr{H}_{n,{\bf A}}(q)}
\newcommand{\mH}{\mathscr{H}_n(q)}

{\vskip-\lastskip\medskip
  \noindent
  {\em #1.}\enspace
  }%
{\qed\par\medskip
  }

\begin{document}

\title
[Mirabolic Hecke algebras]
{Mirabolic Hecke algebras, Schur-Weyl duality and Frobenius character formulas}
\author[Jinkui Wan]{Jinkui Wan}

\address{
School of Mathematical Sciences,
Shenzhen University,
Shenzhen, 518060, P.R. China. }
\email{wjk302@hotmail.com}


\maketitle

\begin{abstract}
We first introduce a new presentation for the mirabolic Hecke algebra $\mHR$ over an arbitrary commutative ring $R$ and derive a new basis. Based this presentation, specializing to the case of $\mH$ over the field $\mathbb{C}(q)$, we construct a basis for the cocenter of $\mH$, which facilitates the definition of its character table. We further establish a Schur--Weyl duality between $\mH$ and the quantum group $U_q(\mathfrak{gl}_r)$. As an application, we obtain Frobenius character formulas for the irreducible characters of $\mH$ within the ring of symmetric functions. Finally, we derive a recursive Murnaghan--Nakayama rule for the computation of the character table.
\end{abstract}

\setcounter{tocdepth}{1}
 \tableofcontents

\section{Introduction}

 \subsection{}

The systematic computation of symmetric group characters dates back to the remarkable work of Frobenius \cite{Fr} in 1900. His methodology was subsequently refined into the Murnaghan--Nakayama rule, providing a combinatorial framework for character values (cf. \cite[Chapter I, Example 9]{Mac}). In his study of the general linear groups, Schur \cite{Sch1, Sch2} established that the Frobenius construction can be recovered by a way of the fundamental reciprocity between the general linear group $GL_r(\mathbb{C})$ and the symmetric group $\mathfrak{S}_n$, now universally termed Schur--Weyl duality.

Decades later, Jimbo \cite{Ji} extended this duality to the quantum setting, relating $U_q(\mathfrak{gl}_r)$ with the Iwahori--Hecke algebra $\mc{H}_n(q)$. Motivated by Schur's work, Ram \cite{Ra} (see also King--Wybourne \cite{KW}) developed a ``Frobenius-type'' formula and a combinatorial rule for the characters of these Hecke algebras. Furthermore, the theory of character tables for Hecke algebras was systematically formulated by Geck and Pfeiffer \cite{GP1}. These investigations uncovered profound links between the representation theory of Hecke algebras, Hall--Littlewood symmetric functions as well as Kronecker products of symmetric group representations.  Ultimately, these results yield a $q$-analogue of the Murnaghan--Nakayama rule that smoothly recovers the classical character theory of $\mathfrak{S}_n$ in the limit as $q \to 1$.
 
 When $q$ is a power of a prime, the Iwahori--Hecke algebra $\mathcal{H}_n(q)$ associated with the symmetric group is realized as the centralizer algebra $e\mathbb{C}GL_n(q)e$, where $e=|\mathsf{B}|^{-1}\sum_{b\in \mathsf{B}}b$ is the idempotent corresponding to the Borel subgroup $\mathsf{B}$ of upper triangular matrices in $GL_n(q)$. 
The study of the mirabolic Hecke algebra $\mH$ was initiated by Solomon in \cite{So1, So2}. Conceptually, one considers the group $GA(V)$ of affine transformations on $V=\mathbb{F}_q^n$, which is isomorphic to the semidirect product $\mathbb{F}_q^n\rtimes GL_n(\mathbb{F}_q)$ (cf. \cite{WZ}). Equivalently, $GA_n(q)$ can be identified with the mirabolic subgroup of $GL_{n+1}(q)$ consisting of matrices of the form:
$$
GA_n(q)=\Bigg\{\begin{bmatrix}1&0\\ \alpha& g \end{bmatrix}\mid \alpha\in\mathbb{F}_q^n, g\in GL_n(q)\Bigg\}.
$$
By regarding the Borel subgroup $\mathsf{B} \subset GL_n(q)$ as a subgroup of $GA_n(q)$, the resulting algebra $e\mathbb{C}GA_n(q)e$ coincides with $\mH$ in the prime power case. Furthermore, Rosso \cite{Ro1} identified $\mH$ with the convolution algebra of $GL_n(q)$-equivariant complex-valued functions on the triple variety $GL_n(q)/\mathsf{B} \times GL_n(q)/\mathsf{B} \times \mathbb{F}_q^n$. This generalizes the classical realization of $\mathcal{H}_n(q)$ as the convolution algebra of $GL_n(q)$-equivariant functions on the double flag variety $GL_n(q)/\mathsf{B} \times GL_n(q)/\mathsf{B}$. Recently, the study of mirabolic structures has seen substantial progress; see, for instance, \cite{BFGT, FZM, FGT1, FGT2, FU, GR, Ro1, Ro2, Sh, SS, Tr}.

 \subsection{}
The primary objective of this article is to derive a ``Frobenius-type'' formula and an explicit combinatorial rule for the characters of $\mH$. To this end, we first establish a new presentation for the mirabolic Hecke algebra $\mHR$ over an arbitrary commutative ring $R$. Furthermore, we develop a Schur--Weyl duality between $\mH$ and the quantum group $U_q(\mathfrak{gl}_r)$. This duality is shown to be compatible with the classical duality between the Iwahori--Hecke algebra $\mathcal{H}_n(q)$ and the quantum group $U_q(\mathfrak{gl}_{r+1})$ via a see-saw principle (see \eqref{eq:see-saw} below). Building on the duality, we establish the Frobenius character formula and the recursive Murnaghan--Nakayama rule for the irreducible characters of $\mH$.

\subsection{}
Let us describe our results in further detail. Extending the presentations for $\mH$ over $\mathbb{C}(q)$ found in \cite{Si} and \cite{Ro1}, we first introduce a new presentation for $\mHR$ over arbitrary commutative ring $R$ in terms of generators $T_1,\dots, T_{n-1}$ and $P_1,\dots, P_n$, where the elements $P_i$ are idempotents for $1 \leq i \leq n$. Using this presentation, we obtain a new basis for $\mHR$ consisting of \emph{standard elements} $\{T_{(A,B,w)}\}$, where $A, B \subseteq \{1, 2, \dots, n\}$ with $|A|=|B|=k$, and $w \in \mathfrak{S}_{n-k}$ fixes the first $k$ indices. We prove that every trace function on the mirabolic Hecke algebra over the ring $R={\bf A}=\mathbb{Z}[v,v^{-1}]$ is uniquely determined by its values on the certain elements $\widehat{T}^{(n)}_{\mu}$ indexed by partitions of $k$ for all $0 \leq k \leq n$ and furthermore this construction yields a basis  for the cocenter $\mathscr{H}_{n,{\bf A}}(q)/[\mathscr{H}_{n,{\bf A}}(q), \mathscr{H}_{n,{\bf A}}(q)]$  (see Theorem~\ref{thm:basistrace} and Corollary~\ref{cor:spacetrace}). Specialization at $R=\mathbb{C}(q)$, this leads to well-defined notions of class polynomials and a character table for the mirabolic Hecke algebra, paralleling the framework for Hecke algebras developed by Geck and Pfeiffer \cite{GP1}. 

Building on this presentation, we establish a Schur--Weyl duality between $\mH$ and the quantum group $U_q(\mathfrak{gl}_r)$. Our construction is inspired by the framework developed by Halverson \cite{Ha} in the study of $q$-rook algebras. Let $V_r$ and $V_{r+1}$ be the natural representations of $U_q(\mathfrak{gl}_r)$ and $U_q(\mathfrak{gl}_{r+1})$ over $\mathbb{C}(q)$, respectively. The tensor space $V_{r+1}^{\otimes n}$ naturally admits a $U_q(\mathfrak{gl}_{r+1})$-module structure; by viewing $U_q(\mathfrak{gl}_r)$ as a subalgebra of $U_q(\mathfrak{gl}_{r+1})$, we obtain an action of $U_q(\mathfrak{gl}_r)$ on $V_{r+1}^{\otimes n}$. We construct explicit actions of the generators $T_1,T_2,\ldots,T_{n-1}$ and $P_1,P_2,\ldots,P_n$ on $V_{r+1}^{\otimes n}$ and prove that the actions of $U_q(\mathfrak{gl}_r)$ and $\mH$ satisfy the double centralizer property. This duality generalizes the classical Jimbo duality between $U_q(\mathfrak{gl}_r)$ and $\mathcal{H}_n(q)$ on $V_r^{\otimes n}$ and fits into the following commutative diagram satisfying the see-saw principle:
\begin{equation}\label{eq:see-saw}
\begin{array}{ccccc}
&U_q(\mathfrak{gl}_{r+1})\xrightarrow{\hspace{.5cm} \phi \hspace{.5cm}} &\End(V_{r+1}^{\otimes n} )&\xleftarrow{\hspace{.5cm} \psi \hspace{.5cm}} &\mathcal{H}_n(q)\\
&\rotatebox[origin=c]{90}{$\subset$} \qquad \qquad &\rotatebox[origin=c]{90}{$=$}  & &\rotatebox[origin=c]{-90}{$\subset$}\quad \\
 &U_q(\mathfrak{gl}_r)\xrightarrow{\hspace{.5cm} \Phi \hspace{.5cm}} &\End(V_{r+1}^{\otimes n} )&\xleftarrow{\hspace{.5cm} \Psi \hspace{.5cm}} &\mH\\
\end{array}
\end{equation}
where the first row represents the duality established in \cite{Ji}. This duality should be compared to the distinct geometric Schur--Weyl duality between mirabolic quantum groups and mirabolic Hecke algebras in \cite{FZM, Ro2}. We emphasize that while Rosso \cite{Ro1} established an abstract isomorphism between $\mH$ and the $q$-rook algebra $\mathscr{R}_n(q)$ via semisimplicity and dimension counting, a naive attempt to derive our duality by composing Rosso's isomorphism with Halverson's duality \cite{Ha} is hindered by the fact that Rosso's isomorphism is purely abstract; it remains a highly non-trivial task to define an isomorphism between $\mH$ and $\mathscr{R}_n(q)$ at the level of generators and relations, see Remark \ref{rem:isomorphism}.

By exploiting the Schur--Weyl duality on $V_{r+1}^{\otimes n}$, we derive a Frobenius-type character formula for $\mH$ (Theorem~\ref{thm:Frobenius}). Our approach is inspired by the work of Ram \cite{Ra}, who obtained a character formula for the type $A$ Hecke algebra via Schur--Jimbo duality. The symmetric functions appearing in our formula are certain Hall--Littlewood functions, analogous to those encountered in \cite{Ra}. Finally, we utilize this Frobenius-type formula to provide a recursive rule for computing characters by removing  strips from a partition. This rule serves as a mirabolic analogue of the Murnaghan--Nakayama rule, generalizing the results for $\mathcal{H}_n(q)$ found in \cite{Ra}.

Another feature of this duality is the introduction of \emph{mirabolic $q$-Schur algebras}. These algebras provide a natural analogue to the classical $q$-Schur algebras, which act as a bridge between the quantum group $U_q(\mathfrak{gl}_r)$ and the Hecke algebra $\mathcal{H}_n(q)$. While our construction is comparable to the mirabolic quantum groups and Schur algebras investigated geometrically in \cite{Ro2, GR}, the two frameworks remain essentially distinct. In forthcoming works, we shall address the cellular structure of both $\mHR$ and the mirabolic $q$-Schur algebras, alongside the module representation theory of $\mH$. 

\subsection{}
The article is organized as follows. In Section~\ref{sec:new-pre}, we introduce a new presentation and construct a basis for the mirabolic Hecke algebra $\mHR$ over an arbitrary commutative ring $R$. As applications in the case $R={\bf A}=\mathbb{Z}[q,q^{-1}]$, we provide a basis for the cocenter $\mathscr{H}_{n,{\bf A}}(q)/[\mathscr{H}_{n,{\bf A}}(q),\mathscr{H}_{n,{\bf A}}(q)]$ , which facilitates the definition of the character table for $\mH$ over the field $\mathbb{C}(q)$. In Section~\ref{sec:duality}, we develop a Schur--Weyl duality between $\mH$ and the quantum group $U_q(\mathfrak{gl}_r)$ acting on the tensor space $V_{r+1}^{\otimes n}$. Finally, we derive a Frobenius character formula in Section~\ref{sec:Frobenius} and a combinatorial rule for computing the irreducible characters of $\mH$ in Section~\ref{sec:MN}.

{\bf Acknowledgements.}  The author would like to thank Zhaobing Fan and Zhicheng Zhang for enlightening conversations and would also thank Halverson and Rosso for their work on $q$-rook algebras and mirabolic Hecke algebras over $\mathbb{C}(q)$ which helped lead to the presentation in  Theorem \ref{thm:new-pre-R}. 

\section{The mirabolic Hecke algebra $\mHR$}\label{sec:new-pre}
\subsection{The mirabolic Hecke algebra $\mHR$.} Let $R$ be a commutative ring and assume $q\in R$ is invertible. Following \cite{So1, So2}, we define the mirabolic Hecke algebra $\mHR$ as follows. 
\begin{definition} The mirabolic Hecke algebra $\mHR$ is defined to be the $R$-algebra generated by 
$T_0,T_1,\ldots, T_{n-1}$ subjection to the following relations:
\begin{align}
T_0^2&=(q-2)T_0+(q-1),\label{eq:mH-1}\\
  T_i^2&=(q-1)T_i+q,  \quad 1\leq i\leq n-1,\label{eq:mH-2}\\
T_iT_j&=T_jT_i,  \quad 0\leq i,j\leq n-1, |i-j|>1,\label{eq:mH-3}\\
T_iT_{i+1}T_i &=T_{i+1}T_iT_{i+1},   \quad 1\leq i\leq n-2,\label{eq:mH-4}\\
T_0T_1T_0T_1&=(q-1)(T_1T_0T_1+T_1T_0)-T_0T_1T_0,  \label{eq:mH-5}\\
T_1T_0T_1T_0&=(q-1) (T_1T_0T_1+T_0T_1)-T_0T_1T_0. \label{eq:mH-6}
\end{align}
\end{definition}
In the case $R=\mathbb{C}(q)$, we shall write $\mH=\mathscr{H}_{n,\mathbb{C}(q)}(q)$. 
For each $w\in\mathfrak{S}_n=\langle s_1,\ldots,s_{n-1}\rangle$, one can associate an element $T_w\in\mHR$. Let $\Omega_n$ be the Weyl group of type $B_n$, that is, $\Omega_n=C_2^n\rtimes\mathfrak{S}_n$ is the semidirect product of $C_2^n$ and $\mathfrak{S}_n$ with $C_2=\{1,z\}$ being the cyclic group of order $2$. Let $\mathsf{r}_1=(z,1,\ldots,1)\in C_2^n\subset C_2^n\rtimes\mathfrak{S}_n$ and $\mathsf{r}_i=s_{i-1}\mathsf{r}_{i-1}s_{i-1}$ for $2\leq i\leq n$. It is known that $\Omega_n$ is generated by $s_0:=\mathsf{r}_1,s_1,\ldots,s_{n-1}$ and each element $\omega\in\Omega_n$ can be uniquely written as $\omega=\mathsf{r}_1^{\alpha_1}\mathsf{r}_2^{\alpha_2}\cdots \mathsf{r}_n^{\alpha_n}w$ with $w\in\mathfrak{S}_n$ and $\alpha=(\alpha_1,\alpha_2,\ldots,\alpha_n)\in\mathbb{Z}_2^n$.  Accordingly
 set $M(\omega)=\{i\mid \alpha_i=1, 1\leq i\leq n\}\subseteq\{1,2,\ldots,n\}$ and $\omega^*=w$ for each $\omega=\mathsf{r}_1^{\alpha_1}\mathsf{r}_2^{\alpha_2}\cdots \mathsf{r}_n^{\alpha_n}w\in\Omega_n$.  Following \cite[Definition 2.23]{Si}, let $\Lambda_n$ be the subset of $\Omega_n$ defined via 
\begin{equation}\label{eq:Lambda}
\Lambda_n=\big\{\omega\in\Omega_n\mid \text{ if }i,j\in M(\omega) \text{ and }i<j, \text{then }(\omega^*)^{-1}(i)>(\omega^*)^{-1}(j)\big\}. 
\end{equation}
As $(\Omega_n, S)$ with $S=\{s_0,s_1,\ldots,s_{n-1}\}$ is a Coxeter system of type $B_n$,   each $\omega\in\Lambda_n\subset \Omega_n$ admits a reduced expression $\omega=s_{i_1}s_{i_2}\cdots s_{i_k}$ for $0\leq i_1,i_2,\ldots,i_k\leq n-1$ and $k\geq 0$ and then accordingly we set $T_{\omega}=T_{i_1}T_{i_2}\cdots T_{i_k}$. 
\begin{lemma}\cite[Proposition 2.25]{Si}\cite[Theorem 3.15, Proposition 6.7]{So1} \label{lem:old-basis}
For each $\omega\in\Lambda_n$, the element $T_\omega$ only depends on $\omega$. The set $\{T_{\omega}\mid \omega\in\Lambda_n\}$ is a $\mathbb{C}(q)$-basis of $\mH$ and moreover 
$$
\dim \mH=\sum_{k=0}^n \binom{n}{k}^2 k!.
$$
\end{lemma}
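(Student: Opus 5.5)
This lemma is a cited result of Siegel and Solomon. The plan is to reconstruct its proof in three stages: first well-definedness of $T_\omega$, then spanning, then linear independence via a faithful module whose dimension we count.

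\emph{Stage 1: well-definedness and the count.} We cannot simply use Matsumoto's theorem for type $B_n$: the relations \eqref{eq:mH-5}--\eqref{eq:mH-6} replace the braid relation $T_0T_1T_0T_1=T_1T_0T_1T_0$ by non-homogeneous ones. The first step is therefore to show that every $\omega\in\Lambda_n$ has reduced expressions that never contain $s_0s_1s_0s_1$ or $s_1s_0s_1s_0$ as a consecutive subword. Concretely, the defining condition of $\Lambda_n$ says the signed letters appear in decreasing order, which forbids two "sign flips" that would cross. Any two such reduced expressions are then connected using only the relations \eqref{eq:mH-3}--\eqref{eq:mH-4}, which hold verbatim in $\mH$, so $T_\omega$ is independent of the chosen expression. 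For the count, an element of $\Lambda_n$ with $|M(\omega)|=k$ is determined by:
\begin{itemize}
\item the set $M(\omega)$, with $\binom nk$ choices;
\item the set $(\omega^*)^{-1}(M(\omega))$, with $\binom nk$ choices;
\item a bijection between the remaining $n-k$ positions, with $(n-k)!$ choices.
\end{itemize}
The order on $M(\omega)$ is forced. This gives $|\Lambda_n|=\sum_k\binom nk^2(n-k)!=\sum_k\binom nk^2k!$ after substituting $k\mapsto n-k$.

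\emph{Stage 2: spanning.} Show that the span of $\{T_\omega\mid \omega\in\Lambda_n\}$ is a left ideal containing $1$. It suffices to check that $T_iT_\omega$ lies in the span for each $i$. Split into cases according to whether $s_i\omega$ is longer or shorter and whether it stays in $\Lambda_n$. In the shorter case use the quadratic relations \eqref{eq:mH-1}--\eqref{eq:mH-2}. When $s_i\omega$ leaves $\Lambda_n$, the product contains $T_0T_1T_0T_1$ or $T_1T_0T_1T_0$ up to commuting moves, and \eqref{eq:mH-5}--\eqref{eq:mH-6} rewrite it as a combination of shorter terms. Induction on length, together with a secondary induction on $|M|$, then closes the argument.

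\emph{Stage 3: independence.} Specialize to a prime power $q$ and realize the algebra as $e\,\mathbb{C}GA_n(q)\,e$. Its dimension equals the number of $\mathsf B$-double cosets in $GA_n(q)$, equivalently the number of $GL_n$-orbits on flag $\times$ flag $\times\ \mathbb F_q^n$. These orbits are parametrized by a permutation $w$ together with a vector orbit under $\mathsf B\cap w\mathsf B w^{-1}$, and the count again gives $\sum_k\binom nk^2k!$. Verify the relations \eqref{eq:mH-1}--\eqref{eq:mH-6} in the convolution algebra; $T_0$ corresponds to the double coset of a unipotent translation. This yields a surjection from the generic algebra onto $e\,\mathbb{C}GA_n(q)\,e$. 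To pass to $\mathbb{C}(q)$, work over $\mathbb{Z}[q,q^{-1}]$: the spanning set from Stage 2 is defined there. A linear dependence over $\mathbb{C}(q)$ can be cleared to polynomial coefficients that are not all divisible by a common factor, and specializing at infinitely many prime powers then contradicts the dimension count. Hence the spanning set is a basis of $\mH$ of the stated size.

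The main obstacle is Stage 2: controlling the straightening of words that violate the $\Lambda_n$ condition using \eqref{eq:mH-5}--\eqref{eq:mH-6}. A secondary difficulty is the combinatorial fact in Stage 1 that elements of $\Lambda_n$ admit braid-$B$-free reduced words connected by type $A$ moves only. I would first attempt both via explicit normal forms $T_\omega = T_{u}\,(T_0T_1\cdots T_{j_1-1})\cdots$, built from left coset representatives.
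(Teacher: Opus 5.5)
The paper gives no proof of this lemma; it is quoted from Siegel and Solomon, so your reconstruction has to stand on its own. Stages 1 and 2 are essentially right, provided the combinatorics is stated with the correct quantifiers.

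Well-definedness needs that \emph{no} reduced word of $\omega\in\Lambda_n$ contains $s_0s_1s_0s_1$ or $s_1s_0s_1s_0$, not merely that some reduced words avoid them. Given that, Matsumoto's theorem finishes the job, since a type $B_2$ braid move can never occur.

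The case $s_i\omega\notin\Lambda_n$ of Stage 2 needs the converse: every $\omega\notin\Lambda_n$ has some reduced word containing the factor. You should apply \eqref{eq:mH-5}--\eqref{eq:mH-6} at the \emph{first} word containing the factor along a Matsumoto chain that starts from $s_i$ times a reduced word of $\omega$. Then only moves valid in $\mH$ are used before it.

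Both directions hold. Regard $\omega$ as a signed permutation with $\omega(j)=\pm\omega^*(j)$, negative exactly when $\omega^*(j)\in M(\omega)$. Then $\omega\in\Lambda_n$ iff the negative entries of $[\omega(1),\ldots,\omega(n)]$ increase from left to right.
\begin{itemize}
\item A reduced word $u\,s_0s_1s_0s_1\,v$ makes $s_0,s_1$ right descents of $y=us_0s_1s_0s_1$, so $y(2)<y(1)<0$. Length-increasing right multiplications preserve such a decreasing pair of negative entries.
\item Conversely, if positions $1,2$ do not carry such a pair, some right descent preserves one, and induction on length applies.
\end{itemize}
Your double coset count is also correct. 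The $(\mathsf B\cap w\mathsf Bw^{-1})$-orbits on $\mathbb F_q^n$ are represented by $\sum_{i\in S}e_i$ with $S$ an antichain for the order $i\prec j\iff i<j,\ w^{-1}(i)<w^{-1}(j)$, and this is exactly the defining condition of $\Lambda_n$.

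The genuine gap is in Stage 3, at ``This yields a surjection''. Verifying \eqref{eq:mH-1}--\eqref{eq:mH-6} in $e\,\mathbb{C}GA_n(q)e$ only gives a homomorphism. Your independence argument needs it to be onto, equivalently that the images of the $T_\omega$, $\omega\in\Lambda_n$, are linearly independent there. That is the substantive part of Solomon's theorem, not a consequence of the relations.

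It is also not automatic as in the Iwahori--Matsumoto situation, because $T_\omega$ is not sent to a single double coset indicator. Write $(\alpha,g)$ for $\begin{bmatrix}1&0\\ \alpha&g\end{bmatrix}$ and take $n=2$, $T_0\mapsto 1_{\mathsf B(e_1,1)\mathsf B}$, $T_1\mapsto 1_{\mathsf Bs_1\mathsf B}$. Then $T_0T_1T_0\mapsto 1_{\mathsf B(e_1+e_2,s_1)\mathsf B}$, but $T_1T_0T_1\mapsto 1_{\mathsf B(e_2,1)\mathsf B}+1_{\mathsf B(e_1+e_2,s_1)\mathsf B}$. Indeed, the index character gives $q^2(q-1)$ for $T_1T_0T_1$, while $\mathsf B(e_2,1)\mathsf B$ contains only $q(q-1)$ cosets of $\mathsf B$.

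So you need an additional argument. One option is a triangularity statement: an order on $\mathsf B\backslash GA_n(q)/\mathsf B$, matched with $\Lambda_n$ via $\omega\mapsto\mathsf B(\sum_{i\in M(\omega)}e_i,\omega^*)\mathsf B$, for which the image of $T_\omega$ is a nonzero multiple of the indicator of the double coset of $\omega$ plus lower terms. Without some such argument, Stage 3 adds nothing to the bound $\dim\mH\le|\Lambda_n|$ already given by Stage 2. With it, your clearing of denominators and specialization at infinitely many prime powers does give linear independence over $\mathbb{C}(q)$.
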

Clearly by \eqref{eq:Lambda} we have $\mathfrak{S}_n\subset \Lambda_n$ and hence $\{T_w\mid w\in\mathfrak{S}_n\}$ is linearly independent in $\mH$. This implies that the subalgebra of $\mH$ generated by $T_1,T_2,\ldots, T_{n-1}$ is isomorphic to the Hecke algebra $\mathcal{H}_n(q)$ associated to the symmetric group $\mathfrak{S}_n$.  From now on, we shall regard $\mathcal{H}_n(q)$ as a subalgebra of $\mH$.  

It is known that $\mH$ is semisimple over $\mathbb{C}(q)$ and its irreducible representations have been constructed by Siegel in \cite{Si} (see \cite[Proposition 4.15]{Ro1} for an equivalent construction) as follows. For each $n\geq 0$, let $\mathcal{P}_n$ be the set of partitions of $n$. 
\begin{lemma}\cite[Proposition 3.25]{Si}\label{lem:Si-irred}
 For each
$(\lambda,k)$ with $\lambda\in\mathcal{P}_k$ and $0\leq k\leq n$, there exists an irreducible representation
$N_{(\la,k)}^{(n)}$ of $\mH$ with character $\chi_{(\la,k)}^{(n)}$ such that
$\{N_{(\la,k)}^{(n)}\mid \la\in\mc{P}_k, 0\leq k\leq n\}$ forms a complete set of
nonisomorphic irreducible $\mH$-modules. 
\end{lemma}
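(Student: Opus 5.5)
The plan is to build the modules inductively from Hecke algebra representations, using the Borel/mirabolic realization to guess the right answer and the dimension formula of Lemma~\ref{lem:old-basis} to confirm completeness.

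\textbf{Step 1: the modules for $k=n$.} First I would exhibit a surjection $\mH\to\mathcal{H}_n(q)$ that sends $T_i\mapsto T_i$ for $i\ge 1$ and $T_0\mapsto -1$. It has to be checked against the relations. Relation \eqref{eq:mH-1} holds, since $1=-(q-2)+(q-1)$. For relations \eqref{eq:mH-5}--\eqref{eq:mH-6}, substituting $T_0=-1$ turns the left side into $T_1^2$ and the right side into $(q-1)(T_1^2-T_1)+T_1$. These are equal by the quadratic relation: $T_1^2=(q-1)T_1+q$ gives $(q-1)T_1^2-(q-2)T_1=(q-1)^2T_1+(q-1)q-(q-2)T_1$, and this should reduce to $(q-1)T_1+q$; I will verify it carefully, and if it fails, try $T_0\mapsto q-1$ instead. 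Pulling back the Specht modules $S^\lambda$, $\lambda\in\mathcal{P}_n$, through this map gives $N^{(n)}_{(\lambda,n)}$.

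\textbf{Step 2: general $k$, by induction on $n$.} Set $e_k$ to be an idempotent built from $T_0,\dots,T_{n-k-1}$, namely the "fixing the vector" idempotent on the last $n-k$ coordinates. I would realize $N^{(n)}_{(\lambda,k)}$ as the module induced from $\mathscr{H}_k\otimes \mathcal{H}_{n-k}$, where $\lambda$ gives the $\mathscr{H}_k$-factor and the trivial module gives $\mathcal{H}_{n-k}$. Equivalently, following Siegel, I would take a basis indexed by pairs consisting of a $k$-subset of $\{1,\dots,n\}$ and a standard tableau of shape $\lambda$, and write explicit Young-type seminormal formulas for the action of the $T_i$ and $T_0$. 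One then checks the defining relations directly. This gives modules of dimension $\binom nk f^\lambda$.

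\textbf{Step 3: irreducibility and completeness.} The algebra $\mH$ is semisimple over $\mathbb{C}(q)$. A Tits deformation argument, or Maschke's theorem for $GA_n(q)$ when $q$ is a prime power combined with generic specialization, supports this. It then suffices to show two things:
\begin{itemize}
\item the modules are pairwise non-isomorphic and irreducible;
\item $\sum_{k}\sum_{\lambda\in\mathcal{P}_k}\big(\binom nk f^\lambda\big)^2=\sum_k\binom nk^2k!$.
\end{itemize}
The second is immediate from $\sum_\lambda (f^\lambda)^2=k!$ and matches Lemma~\ref{lem:old-basis}.

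For irreducibility, the plan is to restrict to $\mathcal{H}_n(q)$ and use the eigenvalues of the Jucys--Murphy type elements together with the idempotents $P_i$ (or $T_0$-eigenvalues) to separate the basis vectors. Every vector is then cyclic, so each module is irreducible. Distinct pairs $(\lambda,k)$ are separated by their joint spectra, which gives non-isomorphism.

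\textbf{Main obstacle.} The hard part will be verifying the relations \eqref{eq:mH-5}--\eqref{eq:mH-6} for the explicit action of $T_0$ in Step 2. If the direct check gets too heavy, I would instead specialize $q$ to prime powers. There the modules come from $\mathrm{Ind}$ of Borel invariants in $\mathbb{C}GA_n(q)$, decomposed via Clifford theory over the normal subgroup $\mathbb{F}_q^n$: the orbit of a nonzero character has stabilizer an affine group of smaller rank, which produces the index $k$. Generic-degree and Tits deformation arguments then transfer the conclusion back to $\mathbb{C}(q)$.
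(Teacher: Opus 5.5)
The paper does not prove this statement; it imports it from Siegel \cite{Si}. Your proposal therefore has to stand on its own, and as written it does not.

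\textbf{Step 1 (minor).} The map $T_0\mapsto -1$ is not a homomorphism onto $\mathcal{H}_n(q)$ for $n\ge 2$. Under it, the right-hand side of \eqref{eq:mH-5} is $-(q-1)T_1^2-qT_1$, not what you wrote. Equating this with $T_1^2$ forces $T_1=-1$. Equivalently, $T_0=-1$ means $P_1=1$, and then \eqref{eq:Pi} and \eqref{eq:new-pre-4} force $T_1=-1$. Your fallback $T_0\mapsto q-1$, i.e.\ $P_1\mapsto 0$, is the right map: both sides of \eqref{eq:mH-5} and \eqref{eq:mH-6} become $(q-1)^2T_1^2$. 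This settles the case $k=n$.

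\textbf{Step 2 (the genuine gap).} This is where all the content lies. Inducing from the subalgebra $\mathscr{H}_k(q)\otimes\mathcal{H}_{n-k}(q)$ does not produce modules of dimension $\binom{n}{k}f^\la$.
Already for $n=2$, $k=1$ that subalgebra is $\C(q)[T_0]=\C(q)P_1\oplus\C(q)(1-P_1)$. Inducing either of its one-dimensional modules gives $\mathscr{H}_2(q)P_1=\mathrm{span}\{P_1,T_1P_1,P_2\}$ or $\mathscr{H}_2(q)(1-P_1)$. These have dimensions $3$ and $4$, never $2$; indeed $\dim\mathscr{H}_2(q)=7$ is odd, so $\mathscr{H}_2(q)$ is not even free over this subalgebra.

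What is true is only that the \emph{restriction} of $N^{(n)}_{(\la,k)}$ to $\mathcal{H}_n(q)$ is induced from $\mathcal{H}_k(q)\otimes\mathcal{H}_{n-k}(q)$, namely from a Specht-type module tensored with a one-dimensional module. The whole difficulty is defining $T_0$ (equivalently $P_1$) on that space and verifying \eqref{eq:mH-5}--\eqref{eq:mH-6}. You defer exactly this (``one then checks the defining relations directly''), and it is the content of the cited result.

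\textbf{Step 3 and the fallback.} Step 3 inherits the gap. Its separation argument needs the unsupplied action of $P_1$, since restriction to $\mathcal{H}_n(q)$ cannot distinguish, e.g., $N^{(2)}_{(\emptyset,0)}$ from a $k=2$ module on which $T_1=-1$. Separating basis vectors also does not by itself make every vector cyclic; you would still need a connectivity argument. The finite-group fallback is likewise only a sketch. Tits deformation gives irreducibles over a splitting field, not over $\C(q)$ itself. It also needs Solomon's identification with $e\C GA_n(q_0)e$ and a Mackey computation of the constituents of $\mathrm{Ind}_{\mathsf B}^{GA_n(q_0)}1$.

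\textbf{A way to close the gap.} You can avoid explicit formulas entirely, using results already in the paper, none of which depend on this lemma. For a finite-dimensional algebra $A$ with idempotent $e$, two facts hold (Green's idempotent theorem):
\begin{itemize}
\item the simple $A$-modules killed by $e$ are exactly the simple $A/AeA$-modules;
\item on the remaining simple $A$-modules, $L\mapsto eL$ is a bijection onto the simple $eAe$-modules.
\end{itemize}
Take $A=\mH$ and $e=P_1$.
\begin{itemize}
\item By \eqref{eq:new-pre-1}--\eqref{eq:new-pre-2}, $P_j=P_jP_1\in AP_1A$. So $A/AP_1A$ is generated by the images of $T_1,\dots,T_{n-1}$, and the corrected Step~1 map shows $A/AP_1A\cong\mathcal{H}_n(q)$. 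This gives the simples indexed by $\mc{P}_n$.
\item Lemma~\ref{lem:rho-A}(2), base changed to $\C(q)$, gives $P_1AP_1\cong\mathscr{H}_{n-1}(q)$.
\end{itemize}
Induction on $n$, starting from $\mathscr{H}_1(q)\cong\C(q)\times\C(q)$, then yields exactly one simple module for each $\la\in\mc{P}_k$, $0\le k\le n$. The dimensions $\binom{n}{k}f^\la$ and explicit matrices are not needed for the statement. They are what a Siegel-style seminormal construction buys in addition.
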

\subsection{A new presentation for $\mHR$.} In \cite{Ro1}, Rosso provided an alternative presentation for $\mH$ over $\mathbb{C}(q)$ by showing that $\mH$ is a quotient algebra of the cyclotomic Hecke algebra with certain parameters, see \cite[Section 4]{Ro1} for details. Inspired by Harvelson's study on $q$-rook algebras in \cite{Ha}, we shall introduce another new presentation for $\mHR$ over an arbitrary commutative ring $R$  in the following. Due to \eqref{eq:mH-2}, we have $T_i^{-1}=q^{-1}(T_i-(q-1))$ for $1\leq i\leq n$. Let 
\begin{align}
P_1=&1-q^{-1}(T_0+1),\label{eq:P1}\\
 P_i=&-P_{i-1}T_{i-1}^{-1}P_{i-1}=-q^{-1}\big(P_{i-1}T_{i-1}P_{i-1}-(q-1)P^2_{i-1}\big),\quad 2\leq i\leq n \label{eq:Pi}. 
\end{align}
Clearly 
\begin{equation}\label{eq:Pi-recursive}
P_{i-1}T_{i-1}P_{i-1}=(q-1)P_{i-1}^2-qP_i
\end{equation}
\begin{lemma} \label{lem:new-rel}
The following relations hold in $\mHR$:
\begin{align}
P_i^2&=P_i,\quad 1\leq i\leq n,\label{eq:new-pre-1}\\
P_iP_j&=P_jP_i=P_i,\quad \forall 1\leq j<i\leq n,\label{eq:new-pre-2}\\
P_iT_j&=T_jP_i, \quad \forall 1\leq i<j\leq n,\label{eq:new-pre-3}\\
P_iT_j&=T_jP_i=-P_i,\quad \forall 1\leq j< i\leq n.\label{eq:new-pre-4}
\end{align}
\end{lemma}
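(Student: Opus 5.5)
The plan is to prove all four relations simultaneously by induction on $i$, and to handle the base case by rewriting the defining relations \eqref{eq:mH-1}, \eqref{eq:mH-5}, \eqref{eq:mH-6} in terms of $P_1$. By \eqref{eq:P1}, $T_0=q-1-qP_1$.

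\emph{Base case.} Substituting $T_0=q-1-qP_1$ into \eqref{eq:mH-1} gives $P_1^2=P_1$ after a direct expansion: the $T_0$-terms combine to $-qT_0+q(q-1)$, and dividing by $q^2$ returns $P_1$. Relation \eqref{eq:new-pre-3} for $i=1$ is immediate from \eqref{eq:mH-3}, since $P_1$ is a polynomial in $T_0$. Substituting the same expression into \eqref{eq:mH-5} and \eqref{eq:mH-6}, and using $P_1^2=P_1$ and \eqref{eq:mH-2}, I expect to obtain a pair of relations of the form
\[
P_1T_1P_1T_1=(q-1)P_1T_1P_1-qP_1T_1^{-1}P_1\cdot(\text{const}),
\qquad
T_1P_1T_1P_1=(\text{same right-hand side}).
\]
Equivalently, in terms of $P_2$ via \eqref{eq:Pi-recursive}, these should read $P_2T_1P_1=-P_2$ and $P_1T_1P_2=-P_2$. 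Once in this form they yield \eqref{eq:new-pre-4} for $i=2$: for example $P_2T_1=P_1P_2T_1$, and one rewrites $P_2T_1$ through $P_1T_1^{-1}P_1T_1$, applying the translated relation to get $-P_2$.

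\emph{Inductive step.} Assume the four relations hold for all indices up to $i-1$, and set $P_i=-P_{i-1}T_{i-1}^{-1}P_{i-1}$.
\begin{itemize}
\item $P_iP_{i-1}=P_{i-1}P_i=P_i$ follows at once from $P_{i-1}^2=P_{i-1}$. Combined with the induction hypothesis $P_{i-1}P_j=P_{i-1}$, this gives \eqref{eq:new-pre-2}.
\item For $j\ge i+1$, $T_j$ commutes with $T_{i-1}$ by \eqref{eq:mH-3} and with $P_{i-1}$ by hypothesis, so it commutes with $P_i$.
\item For $j\le i-2$, $T_j$ commutes with $T_{i-1}$, and $P_{i-1}T_j=T_jP_{i-1}=-P_{i-1}$, so $P_iT_j=T_jP_i=-P_i$.
\item Idempotence follows formally once \eqref{eq:new-pre-4} is known for $j=i-1$. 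Indeed $P_iT_{i-1}^{-1}=-P_i$, so
\[
P_i^2=-P_iT_{i-1}^{-1}P_{i-1}=P_iP_{i-1}=P_i.
\]
\end{itemize}

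\emph{Main obstacle.} Everything therefore reduces to proving $P_iT_{i-1}=T_{i-1}P_i=-P_i$. I would isolate the base-case identity as an abstract relation $(\ast)$ between an idempotent $P$ and a Hecke generator $T$, namely the translated forms of \eqref{eq:mH-5} and \eqref{eq:mH-6}. The first thing to try is to show by induction that the pair $(P_{i-1},T_{i-1})$ satisfies $(\ast)$. To do this, expand $P_{i-1}=-P_{i-2}T_{i-2}^{-1}P_{i-2}$. Then use that $P_{i-2}$ commutes with $T_{i-1}$ and that $P_{i-2}$ satisfies the analogue of \eqref{eq:new-pre-4}. The braid relation $T_{i-1}T_{i-2}T_{i-1}=T_{i-2}T_{i-1}T_{i-2}$ lets one move $T_{i-1}$ past the $T_{i-2}^{-1}$ factors. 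After that, $(\ast)$ for $(P_{i-2},T_{i-2})$ should collapse the resulting word, and $(\ast)$ for $(P_{i-1},T_{i-1})$ then gives $P_iT_{i-1}=-P_i$ as in the base case.

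This braid-and-collapse computation is the step I expect to be delicate. If it becomes unwieldy, the fallback is to prove the stronger statement $P_i=q^{-?}T_{w}P_{i-1}T_w^{-1}\cdots$. That is, realize $P_i$ as a suitably normalized symmetrization of $P_1$ over $\mathfrak S_i$, from which invariance under $T_1,\dots,T_{i-1}$ is visible.
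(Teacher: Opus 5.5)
You have the paper's overall architecture: induction on $i$, with $i=2$ coming from \eqref{eq:mH-5}--\eqref{eq:mH-6}. The step $i\ge 3$ comes from expanding the middle $P_{i-1}$ in $P_{i-1}T_{i-1}P_{i-1}T_{i-1}$ through $P_{i-2}$, commuting $P_{i-2}$ past $T_{i-1}$, applying the braid relation, and then using $P_{i-1}T_{i-2}=-P_{i-1}$. Your derivation of $P_i^2=P_i$ from $P_iT_{i-1}=-P_i$ is valid, and shorter than the paper's separate computation of $P_{k+1}^2$.

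\textbf{The gap.} It is in the one place you commit to specifics, the translated relation $(\ast)$. Substitute $T_0=q-1-qP_1$ into \eqref{eq:mH-5} and use $T_1^2=(q-1)T_1+q$. This gives exactly $P_1T_1P_1T_1=(q-1)P_1T_1+(q-1)P_1-P_1T_1P_1$, i.e. $(P_1T_1P_1-(q-1)P_1)(T_1+1)=0$, i.e. $P_2(T_1+1)=0$. Likewise \eqref{eq:mH-6} is the mirror statement $(T_1+1)P_2=0$.

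Your displayed pair does not have this shape. Its two right-hand sides are also not equal: the true ones differ by $(q-1)(P_1T_1-T_1P_1)$.

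More seriously, your ``equivalent'' forms $P_2T_1P_1=-P_2$ and $P_1T_1P_2=-P_2$ are only the $P_1$-compressed consequences, and they do not imply $P_2T_1=-P_2$. For example, on a $2$-dimensional space take $T_1e_1=qe_1$, $T_1e_2=-e_2$, and let $P_1$ be the projection onto $\langle e_2\rangle$ along $\langle e_1+e_2\rangle$. Then $P_1T_1P_1=-P_1$, hence $P_2=P_1$, and both compressed identities hold. Yet $P_2T_1e_1=-qe_2\neq e_2=-P_2e_1$.

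So the step ``they yield \eqref{eq:new-pre-4} for $i=2$'' fails from the forms you state. The same weakness would break the induction. The inductive computation needs $P_{i-1}T_{i-2}=-P_{i-1}$ in a position where $T_{i-2}$ is followed by $T_{i-1}$, not by $P_{i-2}$, so a compressed hypothesis cannot be applied there.

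\textbf{The repair.} Take $(\ast)$ to be the uncompressed relation $PTP(T+1)=(q-1)P(T+1)$ together with its mirror. Then everything you outline works. The base case even becomes simpler than the paper's, which computes $P_2^2$ and $P_2T_1-T_1P_2$ separately: \eqref{eq:mH-5} and \eqref{eq:mH-6} are literally $P_2T_1=-P_2=T_1P_2$.

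For the step you left as ``should collapse'', set $k=i-1\ge 2$ and expand the middle factor. The inputs are:
\begin{itemize}
\item $P_{k-1}T_k=T_kP_{k-1}$ and $P_kP_{k-1}=P_k$;
\item the braid relation $T_kT_{k-1}T_k=T_{k-1}T_kT_{k-1}$;
\item $P_kT_{k-1}=-P_k$;
\item $P_kT_kT_{k-1}P_{k-1}=P_kT_kP_{k-1}T_{k-1}P_{k-1}=(q-1)P_kT_k-qP_kT_kP_k$.
\end{itemize}
These give
\[
P_kT_kP_kT_k=-P_kT_kP_k+(q-1)P_kT_k+(q-1)P_k .
\]
This is $(\ast)$ for $(P_k,T_k)$, i.e. $P_{k+1}T_k=-P_{k+1}$ once $P_k^2=P_k$ is used. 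It is precisely the paper's computation of $P_{k+1}T_k$, so the fallback involving $T_wP_{i-1}T_w^{-1}$ is unnecessary.

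Note also that the ingredient you need is this instance of \eqref{eq:new-pre-4} for $P_{i-1}$ at the previous level. It is not a statement about $P_{i-2}$, as your text suggests.
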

\begin{proof}
Observe that \eqref{eq:new-pre-2} follows from \eqref{eq:Pi} and \eqref{eq:new-pre-1}. In addition, by \eqref{eq:mH-3} and \eqref{eq:Pi} we have $P_1T_j=T_jP_1$ for $j>1$ and then by induction on $i$ one can easily show \eqref{eq:new-pre-3} holds. Thus, it remains to show \eqref{eq:new-pre-1}, \eqref{eq:new-pre-2} and \eqref{eq:new-pre-4} hold and we will proceed by induction on $i$. 
By \eqref{eq:P1} we have $T_0=q(1-P_1)-1$ and hence $(T_0-(q-1))(T_0+1)=-q^2(1-P_1)P_1$. This means the relation \eqref{eq:mH-1} leads to 
\begin{equation}\label{eq:P1-idemp}
P_1^2=P_1
\end{equation}
and hence 
\begin{equation}\label{eq:P1P2}
P_2P_1=-P_1T_1^{-1}P_1^2=-P_1T_1^{-1}P_1=P_2=-P^2_1T_1^{-1}P_1=P_1P_2. 
\end{equation}
Meanwhile by \eqref{eq:mH-2} we have 
$$
(1+T_1)(1-q^{-1}T_1(1-P_1)T_1)=(1-q)+(1-q)T_1+P_1T_1+T_1P_1T_1
$$
and hence by \eqref{eq:P1-idemp}  and \eqref{eq:Pi-recursive}
\begin{align}
&P_1(1+T_1)(1-q^{-1}T_1(1-P_1)T_1)P_1\notag\\
&=(1-q)P_1+(2-q)P_1T_1P_1+P_1T_1P_1T_1P_1\label{eq:P2-idemp-1}\\
&=(1-q)P_1+(2-q)P_1T_1P_1+(P_1T_1P_1)^2\notag\\
&=(1-q)P_1+(2-q)((q-1)P_1-qP_2)+((q-1)P_1-qP_2)^2=q^2(P_2^2-P_2)\label{eq:P2-idemp-2}
\end{align}
One the other hand, by \eqref{eq:Pi} the term on the right hand side of \eqref{eq:P2-idemp-1} satisfies 
\begin{align*}
&(1-q)P_1+(2-q)P_1T_1P_1+P_1T_1P_1T_1P_1\\
=&((1-q)+(2-q)((1-q^{-1})T_1-q^{-1}T_0T_1))P_1\\
&+((1-q^{-1})T_1-q^{-1}T_0T_1)^2P_1\\
=&((1-q)+(2-q)((1-q^{-1})T_1-q^{-1}T_0T_1))P_1\\
&+\big((1-q^{-1})^2T_1^2-q^{-1}(1-q^{-1})T_1T_0T_1-q^{-1}(1-q^{-1})T_0T_1^2+q^{-2}T_0T_1T_0T_1\big)P_1\\
=&((1-q)+(2-q)((1-q^{-1})T_1-q^{-1}T_0T_1))P_1\\
&+\big((1-q^{-1})^2T_1^2-q^{-1}(1-q^{-1})T_0T_1^2+q^{-2}(q-1)T_1T_0-q^{-2}T_0T_1T_0\big)P_1\\
=&(1-q)P_1+(2-q)(1-q^{-1})T_1P_1-(2-q)q^{-1}T_0T_1P_1\\
&+(1-q^{-1})^2((q-1)T_1+q)P_1-q^{-1}(1-q^{-1})(q-1)T_0T_1P_1+q^{-1}(1-q^{-1})qP_1\\
&-q^{-2}(q-1)T_1P_1+q^{-2}T_0T_1P_1
=0
\end{align*}
where the third equality is due to \eqref{eq:mH-5} and the fourth equality is due to the fact $T_0T_1^2=(q-1)T_0T_1+qT_0$ and $T_0P_1=-P_1$ since $(T_0+1)P_1=0$ by \eqref{eq:mH-1}. This together with \eqref{eq:P2-idemp-2} means 
\begin{equation}\label{eq:P2-idemp-3}
P_2^2=P_2. 
\end{equation}
To prove \eqref{eq:new-pre-4} holds in the case $i=2,j=1$, we shall first show $P_2T_1=T_1P_2$.  Observe that by \eqref{eq:Pi} we have 
\begin{align*}
P_2T_1=-q^{-1}P_1T_1P_1T_1+q^{-1}(q-1)P_1T_1, \quad T_1P_2=-q^{-1}T_1P_1T_1P_1+q^{-1}(q-1)T_1P_1. 
\end{align*}
This together with \eqref{eq:P1} means 
\begin{equation*}
\begin{aligned}
P_2T_1-T_1P_2=&-q^{-1}(P_1T_1P_1T_1-T_1P_1T_1P_1)+q^{-1}(q-1)(P_1T_1-T_1P_1)\\
=&-q^{-1}\Big(\big((1-q^{-1})T_1-q^{-1}T_0T_1\big)^2-\big((1-q^{-1})T_1-q^{-1}T_1T_0\big)^2\Big)\\
&+q^{-2}(q-1)(T_1T_0-T_0T_1)\\
=&-q^{-3}(T_0T_1T_0T_1-T_1T_0T_1T_0)-q^{-1}(1-q^{-1})^2(T_1T_0-T_0T_1)\\
&+q^{-2}(q-1)(T_1T_0-T_0T_1)\\
=&-q^{-3}(T_0T_1T_0T_1-T_1T_0T_1T_0-(q-1)T_1T_0+(q-1)T_0T_1)=0,
\end{aligned}
\end{equation*}
where the last equality is due to \eqref{eq:mH-5} and \eqref{eq:mH-6}.  Therefore we obtain 
\begin{equation}\label{eq:P2T1-commute}
P_2T_1=T_1P_2
\end{equation}
Then by \eqref{eq:Pi}, \eqref{eq:P1P2} and \eqref{eq:P2-idemp-3}  we have 
\begin{align*}
P_2^2=&-P_2 P_1T_1^{-1}P_1=-q^{-1}P_2(T_1-(q-1))P_1\\
=&-q^{-1}P_2T_1P_1-q^{-1}(q-1)P_2P_1=-q^{-1}T_1P_2P_1-q^{-1}(q-1)P_2P_1\\
=&-q^{-1}T_1P_2-q^{-1}(q-1)P_2=P_2,
\end{align*}
which implies 
\begin{equation}\label{eq:T1P2}
P_2T_1=-P_2=T_1P_2. 
\end{equation} 
Thus by \eqref{eq:P1-idemp}, \eqref{eq:P1P2}, \eqref{eq:P2-idemp-3} and \eqref{eq:T1P2}, the relations \eqref{eq:new-pre-1}, \eqref{eq:new-pre-2} and \eqref{eq:new-pre-4} hold in the case $i=1,2$. By induction on $i$, we assume the relations \eqref{eq:new-pre-1}, \eqref{eq:new-pre-2} and \eqref{eq:new-pre-4} hold for $i=k\geq 2$. Now for the situation $i=k+1$, we first compute $P_{k+1}^2$ as follows 
\begin{align*}
P_{k+1}^2=&q^{-2}(P_kT_kP_k-(q-1)P_k)^2\\
=&q^{-2}(P_kT_kP_kT_kP_k-2(q-1)P_kT_kP_k+(q-1)^2P_k)\\
=&q^{-2}P_kT_k(-q^{-1}(P_{k-1}T_{k-1}P_{k-1}-(q-1)P_{k-1}))T_kP_k\\
&-2q^{-2}(q-1)P_kT_kP_k+q^{-2}(q-1)^2P_k\\
=&-q^{-3}P_kP_{k-1}T_kT_{k-1}T_kP_{k-1}P_k+q^{-3}(q-1)P_kP_{k-1}T_k^2P_k\\
&-2q^{-2}(q-1)P_kT_kP_k+q^{-2}(q-1)^2P_k\\
=&-q^{-3}P_kT_{k-1}T_kT_{k-1}P_k+q^{-3}(q-1)P_k((q-1)T_k+q)P_k\\
&-2q^{-2}(q-1)P_kT_kP_k+q^{-2}(q-1)^2P_k\\
=&-q^{-3}P_kT_kP_k+q^{-3}(q-1)^2P_kT_kP_k+q^{-2}(q-1)P_k\\
&-2q^{-2}(q-1)P_kT_kP_k+q^{-2}(q-1)^2P_k\\
=& -q^{-1}P_kT_kP_k+q^{-1}(q-1)P_k=-q^{-1}(P_kT_kP_k-(q-1)P_k)=P_{k+1}, 
\end{align*}
where the fourth equality is due to $T_kP_{k-1}=P_{k-1}T_k$ by \eqref{eq:new-pre-3}, the fifth equality is due to $P_kP_{k-1}=P_k$ as \eqref{eq:new-pre-2} holds in the case $i=k$ by induction on $i$ and \eqref{eq:mH-3}, and the sixth equality is due to 
$P_kT_{k-1}=-P_k$ as \eqref{eq:new-pre-4} holds in the case $i=k$ by induction on $i$. This proves \eqref{eq:new-pre-1} in the case $i=k+1$. 

In addition, 
$P_{k+1}P_k=-P_kT_k^{-1}P_k^2=-P_kT_k^{-1}P_k=P_{k+1}$ since  \eqref{eq:new-pre-1} holds in the case  $i=k$ by induction on $i$ and $P_kP_{k+1}=-P^2_kT_k^{-1}P_k=-P_kT_k^{-1}P_k=P_{k+1}$.  For $1\leq j<k$ we have $P_{k+1}P_j=-P_kT_k^{-1}P_kP_j=-P_kT_k^{-1}P_jP_k=-P_kP_jT_k^{-1}P_k=-P_jP_kT_k^{-1}P_k=P_jP_{k+1}$ as \eqref{eq:new-pre-2} holds in the case $i=k$ by induction on $i$. Putting together we obtain $P_{k+1}P_j=P_jP_{k+1}=P_{k+1}$ for $1\leq j\leq k$ and hence \eqref{eq:new-pre-2} holds in the case $i=k+1$. 

Finally,  for $1\leq j<k$, we have $P_{k+1}T_j=-P_kT_k^{-1}P_k T_j=P_kT_k^{-1}P_k=-P_{k+1}, T_jP_{k+1}=-T_jP_kT_k^{-1}P_k=P_kT_k^{-1}P_k=-P_{k+1}$ as $P_kT_j=T_jP_k=-P_k$ since \eqref{eq:new-pre-4} holds in the case $i=k$ by induction. For $j=k$, similar to the above calculation for $P^2_{k+1}$ we have 
\begin{align*}
P_{k+1}T_k=&-q^{-1}(P_kT_kP_kT_k-(q-1)P_kT_k)\\
=&-q^{-1}P_kT_k(-q^{-1}(P_{k-1}T_{k-1}P_{k-1}-(q-1)P_{k-1}))T_k+q^{-1}(q-1)P_kT_k\\
=&q^{-2}P_kT_kT_{k-1}T_kP_{k-1}-q^{-2}(q-1)P_kT_k^2+q^{-1}(q-1)P_kT_k\\
=&q^{-2}P_kT_{k-1}T_kT_{k-1}P_{k-1}+q^{-2}(q-1)P_kT_k-q^{-1}(q-1)P_k\\
=&-q^{-2}P_kT_kT_{k-1}P_{k-1}+q^{-2}(q-1)P_kT_k-q^{-1}(q-1)P_k\\
=&-q^{-2}P_kT_k(P_{k-1}T_{k-1}P_{k-1}-(q-1)P_{k-1})-q^{-1}(q-1)P_k\\
=&q^{-1}P_kT_kP_k-q^{-1}(q-1)P_k=-P_{k+1}, 
\end{align*}
where the sixth equality is due to $P_kT_k=P_kP_{k-1}T_k=P_kT_kP_{k-1}$ as \eqref{eq:new-pre-2} and \eqref{eq:new-pre-3} hold in the case $i=k$ while the last equality is due to \eqref{eq:Pi} and $P_k^2=P_k$ as 
\eqref{eq:new-pre-1}  hold in the case $i=k$ by induction on $i$. Similarly, one can show $T_kP_{k+1}=-P_{k+1}$. Thus, \eqref{eq:new-pre-4} holds in the case $i=k+1$. This proves the lemma. 
\end{proof}
We shall apply Lemma \ref{lem:new-rel} to provide a new presentation as well as a new basis for $\mHR$. Let $\mathscr{M}_{n,R}(q)$ be the algebra generated by $P_1,\ldots,P_n, T_1,\ldots,T_{n-1}$ subject to the relations \eqref{eq:mH-2}-\eqref{eq:mH-4}, \eqref{eq:Pi} and \eqref{eq:new-pre-1}-\eqref{eq:new-pre-4}. Here we abuse the notation by using the same notations $T_i, P_j$ in both $\mHR$ and $\mathscr{M}_{n,R}(q)$. By Lemma \ref{lem:new-rel} there exists a surjective algebra homomorphism satisfying 
\begin{equation} \label{eq:map-Phi}
\begin{aligned}
\Theta_R: &\mathscr{M}_{n,R}(q)\rightarrow \mHR, \\
P_1& \mapsto 1-q^{-1}(T_0+1),\quad T_i\mapsto T_i, \quad \text{ for }1\leq i\leq n-1. 
\end{aligned}
\end{equation} 
and accordingly 
\begin{equation} \label{eq:map-Phi-1}
\mathscr{M}_{n,R}(q)/\ker(\Theta_R)\cong \mHR.
\end{equation} 
Analogous to \cite{Ha}, we introduce the following spanning set for $\mathscr{M}_{n,R}(q)$. For each $0\leq k\leq n$, denote by $\mathfrak{S}'_{n-k}$ the Young subgroup of $\mathfrak{S}_n$ corresponding to the composition $\mu=(1^k,n-k)=(1,\ldots, 1,n-k)$. For $1\leq i\leq j\leq n-1$, set $T_{j,i}=T_{j-1}T_{j-2}\cdots T_i$, where we use the convention $T_{i,i}=1$.  Then for each subset $A=\{1\leq a_1<a_2<\cdots<a_k\leq n\}\subseteq\{1,2,\ldots,n\}$, define 
$$
T_A:=T_{a_1,1}T_{a_2,2}\cdots T_{a_k,k}\in\mHR. 
$$
For each $0\leq k\leq n$, set 
$$
\Gamma_k=\{(A,B,w)\mid A, B\subseteq\{1,2,\ldots,n\}, |A|=|B|=k, w\in\mathfrak{S}'_{n-k}\}
$$
and let $\Gamma^{(n)}=\cup_{k=0}^n \Gamma_k$. For each $(A,B,w)\in\Gamma^{(n)}$, define 
\begin{equation}\label{eq:TABw}
T_{(A,B,w)}:=T_AP_kT_wT_{B}^{-1}. 
\end{equation}
\begin{lemma}\label{lem:new-basis}
The algebra $\mathscr{M}_{n,R}(q)$ is spanned by the set $\{T_{(A,B,w)}\mid (A,B,w)\in\Gamma^{(n)}\}$. 
\end{lemma}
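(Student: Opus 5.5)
Let $\mathcal{S}$ denote the $R$-span of $\{T_{(A,B,w)}\mid (A,B,w)\in\Gamma^{(n)}\}$. Since $1=T_\emptyset P_0T_{1}T_\emptyset^{-1}$ (with the convention $P_0=1$, $\Gamma_0$ giving the elements $T_w$, $w\in\mathfrak{S}_n$), we have $1\in\mathcal{S}$. It therefore suffices to show that $\mathcal{S}$ is stable under right multiplication by every generator $T_j$ ($1\le j\le n-1$) and $P_i$ ($1\le i\le n$). The plan is modelled on Halverson's argument for $q$-rook algebras \cite{Ha}.

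\textbf{Step 1: the structure of $P_k$ and the $T_B$.} First I would record the commutation rules of $P_k$ from Lemma \ref{lem:new-rel}. $P_k$ commutes with $T_j$ for $j>k$, so it commutes with $\mathcal{H}(\mathfrak{S}'_{n-k})$. Also $P_kT_j=T_jP_k=-P_k$ for $j<k$. Hence $P_k\mathcal{H}_n(q)P_k$-type products can be reduced using \eqref{eq:Pi-recursive}.

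Next, the elements $T_B^{-1}$ with $|B|=k$ are the inverses of the minimal-length coset representatives for $(\mathfrak{S}_k\times\mathfrak{S}'_{n-k})\backslash\mathfrak{S}_n$, up to the standard identification. So $\mathcal{H}_n(q)$ is spanned by $T_uT_wT_B^{-1}$ with $u\in\mathfrak{S}_k$ and $w\in\mathfrak{S}'_{n-k}$. Since $P_kT_u=(-1)^{\ell(u)}P_k$, this yields
\[
P_k\,\mathcal{H}_n(q)=\mathrm{span}\{P_kT_wT_B^{-1}\}.
\]
Similarly $\mathcal{H}_n(q)P_k=\mathrm{span}\{T_AT_wP_k\}$. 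Consequently $\mathcal{H}_n(q)P_k\mathcal{H}_n(q)\subseteq\mathcal{S}$.

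\textbf{Step 2: right multiplication by $T_j$.} The span $\mathcal{S}$ equals $\sum_k \mathcal{H}_n(q)P_k\mathcal{H}_n(q)$, where the inclusion $\supseteq$ comes from Step 1 and $\subseteq$ is clear. So $\mathcal{S}$ is closed under right multiplication by the $T_j$ automatically.

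\textbf{Step 3: right multiplication by $P_i$.} This is the main obstacle. It amounts to showing that $P_k h P_i\in\sum_m \mathcal{H}_n(q)P_m\mathcal{H}_n(q)$ for every $h\in\mathcal{H}_n(q)$. By Step 1, write $h=T_wT_B^{-1}$. Moving the factors of $T_w$ that commute with $P_i$, or that are absorbed as $-1$, reduces us to handling $P_kT_xP_i$ with $x$ a distinguished double coset representative for $(\mathfrak{S}_k\times\mathfrak{S}'_{n-k},\ \mathfrak{S}_i\times\mathfrak{S}'_{n-i})$.

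I would then induct on $\ell(x)$ and on $k+i$, assuming $k\le i$ by symmetry (apply the anti-involution fixing the generators). If $x=1$, then $P_kP_i=P_i$ by \eqref{eq:new-pre-2}. Otherwise $x$ is determined by the overlap of the sets $\{1,\dots,k\}$ and $x\{1,\dots,i\}$, and $T_x$ factors through some $T_m$ sandwiched between $P_m$ on both sides, after writing $P_k=P_kP_m$ and $P_i=P_mP_i$ for $m\le k$. The relation \eqref{eq:Pi-recursive},
\[
P_mT_mP_m=(q-1)P_m-qP_{m+1},
\]
then lowers the length of $x$, with the $P_{m+1}$ term increasing the index. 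Both resulting terms lie in $\mathcal{S}$ by induction. Making this bookkeeping of double coset representatives precise, using the braid relations and \eqref{eq:new-pre-3}--\eqref{eq:new-pre-4}, is where I expect the real work to be.
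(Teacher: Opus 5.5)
Your Steps 1 and 2 are sound, and they streamline the paper, which defers both closure checks to case computations in Halverson's proof. Once the span $\mathcal{S}$ is identified with $\sum_k\mathcal{H}_n(q)P_k\mathcal{H}_n(q)$, closure under the $T_j$ is automatic. Two small justifications are missing there:
\begin{itemize}
\item Since $T_B^{-1}\neq T_{d_B^{-1}}$, spanning $\mathcal{H}_n(q)$ by the $T_uT_wT_B^{-1}$ needs $T_d^{-1}\in q^{-\ell(d)}T_{d^{-1}}+\sum_{\ell(y)<\ell(d)}R\,T_y$ plus an induction on length.
\item The inclusion $\mathcal{H}_n(q)P_k\mathcal{H}_n(q)\subseteq\mathcal{S}$ uses $hP_kh'=(hP_k)(P_kh')$ together with the fact that $P_k$ commutes with $T_w$ for $w\in\mathfrak{S}'_{n-k}$.
\end{itemize}

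The genuine gap is Step 3, which is the real content of the lemma. For $P_m$ to pass the outer factors, you need $T_x=T_uT_mT_v$ with, say, $u,v\in\langle s_{m+1},\dots,s_{n-1}\rangle$. Then \eqref{eq:Pi-recursive} produces the term $-q\,P_kT_uP_{m+1}T_vP_i$. Because $P_{m+1}$ does not commute with $T_{m+1}$, this is a word with three idempotents, not an element $P_{k'}T_{x'}P_{i'}$. Your induction on $(\ell(x),k+i)$ therefore does not cover it, and the claim that ``the $P_{m+1}$ term increases the index'' is unjustified. Already for $k=i=2$ and $x=s_2s_1s_3s_2$, the only possible choice is $m=1$. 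Taking, e.g., $u=s_2s_3$ and $v=s_2$, you get $-q\,P_2T_2T_3P_2T_2P_2$. This needs further reductions that your scheme does not organise.

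The fix is to drop general $P_i$. By \eqref{eq:Pi}, $\mathscr{M}_{n,R}(q)$ is generated by $P_1,T_1,\dots,T_{n-1}$ (as the paper uses), so only $\mathcal{S}P_1\subseteq\mathcal{S}$ is needed.

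Let $H_j$ be the subalgebra generated by $T_j,\dots,T_{n-1}$, with $H_n=R$. The stabiliser $\langle s_{j+1},\dots,s_{n-1}\rangle$ of $j$ has two double cosets in $\langle s_j,\dots,s_{n-1}\rangle$, which gives $H_j=H_{j+1}+H_{j+1}T_jH_{j+1}$. Hence, by \eqref{eq:new-pre-3} and \eqref{eq:Pi-recursive},
\[
P_jH_jP_j\subseteq H_{j+1}P_j+H_{j+1}P_{j+1}H_{j+1}\qquad(j<n).
\]
Now prove $P_kH_jP_j\subseteq\mathcal{S}$ for $1\le j\le k$ by induction on $k-j$; the case $j=k=n$ is trivial. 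Write $P_k=P_kP_j$ and treat the two pieces:
\begin{itemize}
\item The first piece contributes $P_kH_{j+1}P_j=P_kH_{j+1}\subseteq\mathcal{S}$.
\item If $j=k$, the second piece contributes $P_kH_{k+1}P_{k+1}H_{k+1}=H_{k+1}P_{k+1}H_{k+1}\subseteq\mathcal{S}$.
\item If $j<k$, the second piece contributes $(P_kH_{j+1}P_{j+1})H_{j+1}$, which lies in $\mathcal{S}$ by induction and Step 2.
\end{itemize}
Taking $j=1$ gives $P_k\mathcal{H}_n(q)P_1\subseteq\mathcal{S}$ for $k\ge1$. Together with the trivial inclusion $\mathcal{H}_n(q)P_1\subseteq\mathcal{S}$, this yields $\mathcal{S}P_1\subseteq\mathcal{S}$, and the lemma follows.
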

\begin{proof}
Let $ \mathscr{M}'_{n,R}(q)$ be the $R$-submodule of $\mathscr{M}_{n,R}(q)$ spanned by the set $\{T_{(A,B,w)}\mid (A,B,w)\in\Gamma^{(n)}\}$. 
Since $\mathscr{M}_{n,R}(q)$ is generated by $P_1, T_1, \dots, T_{n-1}$, each element of $\mathscr{M}_{n,R}(q)$ can be expressed as a linear combination of monomials in these generators.  
Observe that $T_1, \dots, T_{n-1}, P_1 \in \mathscr{M}'_{n,R}(q)$, because $T_i = T_{(\varnothing,\varnothing,s_i)}$ for $1 \le i \le n-1$, and $P_1 = T_{(\{1\},\{1\},1)}$.  
Then by induction on the number of factors in monomials in $P_1, T_1, \dots, T_{n-1}$, it suffices to show that
\[
\mathscr{M}'_n(q) T_i \subseteq \mathscr{M}'_{n,R}(q) \quad (1 \le i \le n-1),\qquad 
\mathscr{M}'_n(q) P_1 \subseteq \mathscr{M}'_{n,R}(q),
\]
in order to prove $\mathscr{M}_{n,R}(q) \subseteq\mathscr{M}'_{n,R}(q)$. 

Using a strategy parallel to the proof of \cite[Theorem 2.1]{Ha}, one can verify $\mathscr{M}'_{n,R}(q) T_i \subseteq \mathscr{M}'_{n,R}(q)$ for $1 \le i \le n-1$.  
The computation of $T_B^{-1} T_i^{-1}$ treated in four cases in \cite{Ha} applies here as well; we only note that in the case $i, i+1 \in B$, the relation $P_k T_j^{-1} = -P_k$ (for $j < k$) from \eqref{eq:new-pre-4} replaces the relation ``$P_k T_j^{-1} = q^{-1} P_k$'' appearing in \cite[Theorem 2.1]{Ha}.  
Similarly, the method of computing $T_{(A,B,w)} P_1$ in the proof of \cite[Theorem 2.1]{Ha} can be adapted, using \eqref{eq:new-pre-1}--\eqref{eq:new-pre-4}, to show $\mathscr{M}'_{n,R}(q) P_1 \subseteq \mathscr{M}_{n,R}(q)$.  
To save space, we omit the details. In summary, we obtain $\mathscr{M}_{n,R}(q) \subseteq\mathscr{M}'_{n,R}(q)$ and hence $\mathscr{M}_{n,R}(q)=\mathscr{M}'_{n,R}(q)$. Then the lemma is proved. 
\end{proof}
\begin{proposition}\label{prop:new-basis-R}
The algebra $\mathscr{M}_{n,R}(q)$ is a free $R$-module and  the set $\{T_{(A,B,w)}\mid (A,B,w)\in\Gamma^{(n)}\}$ is a $R$-basis for $\mathscr{M}_{n,R}(q)$. 
\end{proposition}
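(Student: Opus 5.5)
Lemma \ref{lem:new-basis} already shows that the elements $T_{(A,B,w)}$ span $\mathscr{M}_{n,R}(q)$, so the only remaining task is linear independence over an arbitrary $R$. The count of $\Gamma^{(n)}$ is $\sum_{k=0}^n\binom{n}{k}^2(n-k)!$, and this equals $\sum_k\binom{n}{k}^2k!$ of Lemma \ref{lem:old-basis} after substituting $k\mapsto n-k$. Since $R$ is arbitrary, a dimension comparison is not enough. Instead I will build a representation of $\mathscr{M}_{n,R}(q)$ on a free $R$-module with basis indexed by $\Gamma^{(n)}$, in which the spanning set acts independently.

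\textbf{Step 1: reduce to a universal ring.} Work first over $\mathbf{A}=\mathbb{Z}[q,q^{-1}]$. The presentation of $\mathscr{M}_{n,R}(q)$ is defined over $\mathbf{A}$, so $\mathscr{M}_{n,R}(q)\cong R\otimes_{\mathbf{A}}\mathscr{M}_{n,\mathbf{A}}(q)$, and the elements $T_{(A,B,w)}$ are compatible with base change. It therefore suffices to find a left $\mathscr{M}_{n,R}(q)$-module $F_R$ that is free over $R$ with a basis $\{v_\gamma\}_{\gamma\in\Gamma^{(n)}}$, together with a vector $v_0$, such that $T_{(A,B,w)}v_0$ expands in the $v_\gamma$ basis with a unitriangular transition matrix with respect to some order on $\Gamma^{(n)}$. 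This can equally be done directly over $R$. Then any relation $\sum c_\gamma T_\gamma=0$ applied to $v_0$ forces all $c_\gamma=0$.

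\textbf{Step 2: construct the module.} The natural candidate is a ``regular-type'' module. Take $F_R$ to be the free $R$-module on $\Gamma^{(n)}$, let $v_0=v_{(\varnothing,\varnothing,1)}$, and define the action of $T_i$ and $P_1$ by the rewriting rules from the proof of Lemma \ref{lem:new-basis}. These are the four cases for $T_i$ and the formula for left multiplication by $P_1$, transposed to left multiplication by using the anti-involution fixing the generators; the relations are symmetric, so such an anti-involution exists. Concretely, $T_i\cdot v_{(A,B,w)}$ and $P_1\cdot v_{(A,B,w)}$ are defined as the normal-form expansions of $T_iT_{(A,B,w)}$ and $P_1T_{(A,B,w)}$. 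With this definition, $T_{(A,B,w)}v_0=v_{(A,B,w)}$ holds essentially by construction, which gives unitriangularity.

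\textbf{Step 3: check the defining relations (main obstacle).} One must verify that these operators satisfy \eqref{eq:mH-2}--\eqref{eq:mH-4}, \eqref{eq:Pi} and \eqref{eq:new-pre-1}--\eqref{eq:new-pre-4} on $F_R$. Here $P_i$ acts through the operator defined recursively by \eqref{eq:Pi}. This is a lengthy case analysis, parallel to Halverson's verification for the $q$-rook monoid algebra. Halverson's case $i,i+1\in B$ uses $P_kT_j^{-1}=q^{-1}P_k$; here it must be replaced by $-P_k$, consistent with \eqref{eq:new-pre-4}.

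There are two ways to make this verification manageable. The first is a tensor-space model, in the spirit of the Schur--Weyl action on $V_{r+1}^{\otimes n}$ announced in the introduction. For $r\ge n$ this action should be faithful on the span of the $T_{(A,B,w)}$: choose the weight vector $v_0=e_1\otimes\cdots\otimes e_n$ with distinct indices, and let the extra basis vector $e_{r+1}$ record the positions where $P$ acts. The images of the $T_{(A,B,w)}$ then have distinct leading terms. The second is to check the braid relations only on the Hecke part, where $F_R$ decomposes as a sum of induced modules $\mathcal{H}_n\otimes_{\mathcal{H}_k\otimes\mathcal{H}'_{n-k}}(\text{sign}\otimes\text{reg})$, and to handle the relations involving $P_i$ directly.

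I would try the tensor-space model first, since verifying relations on $V_{r+1}^{\otimes n}$ reduces to local computations on two tensor factors. Freeness then follows from the unitriangularity of the leading terms.
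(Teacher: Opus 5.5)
\textbf{Your proposal has a genuine gap: it never proves linear independence.}

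The step that carries all the content is Step 3, checking that your operators on $F_R$ satisfy \eqref{eq:mH-2}--\eqref{eq:mH-4}, \eqref{eq:Pi} and \eqref{eq:new-pre-1}--\eqref{eq:new-pre-4}. You announce this step but do not carry it out. The same problem affects Step 2. Before independence is known, ``the normal-form expansion of $T_iT_{(A,B,w)}$'' is not well defined; you would have to fix explicit rewriting formulas. The claim $T_{(A,B,w)}v_0=v_{(A,B,w)}$ ``by construction'' then presupposes that these formulas define a module, which is exactly the diamond-lemma-type verification you leave open.

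\textbf{The tensor-space shortcut fails as stated.} If $v_0=u_1\otimes\cdots\otimes u_n$ has no factor $u_{r+1}$, then $T_wT_B^{-1}v_0$ also has none, because $\breve R$ and $\breve R^{-1}$ preserve the multiset of indices. So every $\breve e_k$ with $k\geq 1$ kills it, and $T_{(A,B,w)}v_0=0$ whenever $|A|\geq 1$. To make that route work you would need several further ingredients, none of which you supply:
\begin{itemize}
\item for each $k$, test vectors carrying exactly $k$ copies of $u_{r+1}$;
\item the block-triangularity in $k$;
\item a leading-term statement on each such weight space;
\item a renormalization of $\breve R$, since $R$ need not contain a square root of $q$.
\end{itemize}

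\textbf{The construction is unnecessary.} Your remark that a dimension comparison is not enough is mistaken once it is combined with your own Step 1. Over $\mathbf A=\mathbb Z[q,q^{-1}]$, suppose $\sum c_\gamma T_\gamma=0$ with $c_\gamma\in\mathbf A$. This relation persists in $\mathbb C(q)\otimes_{\mathbf A}\mathscr M_{n,\mathbf A}(q)\cong\mathscr M_{n,\mathbb C(q)}(q)$. There the spanning set of Lemma \ref{lem:new-basis} has $|\Gamma^{(n)}|=\sum_k\binom nk^2k!=\dim\mH$ elements (Lemma \ref{lem:old-basis}). Since $\Theta_{\mathbb C(q)}$ maps onto $\mH$, it must be a basis. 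Hence every $c_\gamma$ vanishes in $\mathbb C(q)$, and so in $\mathbf A\subset\mathbb C(q)$. Thus $\mathscr M_{n,\mathbf A}(q)$ is free on the $T_{(A,B,w)}$. Base change, $\mathscr M_{n,R}(q)\cong R\otimes_{\mathbf A}\mathscr M_{n,\mathbf A}(q)$, then gives freeness over every $R$.

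This is precisely the paper's proof. The inputs you were missing are the known generic dimension of $\mH$ and the surjection $\Theta$; with them no module needs to be constructed.
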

\begin{proof}
By Lemma \ref{lem:new-basis}, it suffices to show that the set $\{T_{(A,B,w)}\mid (A,B,w)\in\Gamma^{(n)}\}$ is $R$-linearly independent. Firstly, in the case $R={\bf A}=\mathbb{Z}[q,q^{-1}]$ with $q$ be an indeterminate and by base change to the fraction field $\mathbb{C}(q)$ of $\mathbb{Z}[q,q^{-1}]$ we obtain that the algebra $\mathscr{M}_n(q)$ over $\mathbb{C}(q)$ is spanned by the set $\{T_{(A,B,w)}\mid (A ,B,w)\in\Gamma^{(n)}\}$ and hence $\dim\mathscr{M}_n(q)\leq |\Gamma^{(n)}|=\dim\mH$.  This together with \eqref{eq:map-Phi-1} leads to  $\ker(\Theta_{\mathbb{C}(q)})=0$ and hence $\dim \mH\leq \dim\mathscr{M}_n(q)=\dim \mH=\sharp\Gamma^{(n)}$. Thus the set $\{T_{(A,B,w)}\mid (A ,B,w)\in\Gamma^{(n)}\}$ is actually a basis for $\mathscr{M}_n(q)$ over $\mathbb{C}(q)$ and hence $\{T_{(A,B,w)}\mid (A ,B,w)\in\Gamma^{(n)}\}$ is linearly independent over ${\bf A}:=\mathbb{Z}[q,q^{-1}]$. Then $\{1\otimes T_{(A,B,w)}\mid (A ,B,w)\in\Gamma^{(n)}\}$ is a $R$-basis for the $R$-module $R\otimes_{{\bf A}}\mathscr{M}_{n,{\bf A}}(q)$, where we regard $R$ as ${\bf A}$-module by specializing $q$ to the invertible element still denoted by $q\in R$.  Then the canonical map $\mathscr{M}_{n,R}(q)\rightarrow R\otimes_{{\bf A}}\mathscr{M}_{n,{\bf A}}(q)$ sending $P_j, T_i$ to $1\otimes P_j, 1\otimes T_i$ for $1\leq i,\leq n-1,1\leq j\leq n$ maps the set $\{T_{(A,B,w)}\mid (A,B,w)\in\Gamma^{(n)}\}$ to a basis of $R\otimes_{\bf A}\mathscr{M}_{n,{\bf A}}(q)$ and hence it is linearly independent. This proves the proposition. 
\end{proof}
\begin{corollary}\label{cor:R-base}
Let $R$ be a commutative ring which is a ${\bf A}$-module via the map ${\bf A}\rightarrow R, q\mapsto q$. Then $\mathscr{M}_{n,R}(q)\cong R\otimes_{{\bf A}}\mathscr{M}_{n,{\bf A}}(q)$. 
\end{corollary}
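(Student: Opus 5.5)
The isomorphism will come from the universal property of the presentation together with the freeness established in Proposition~\ref{prop:new-basis-R}. Since $\mathscr{M}_{n,R}(q)$ is defined by generators $P_1,\ldots,P_n,T_1,\ldots,T_{n-1}$ and the relations \eqref{eq:mH-2}--\eqref{eq:mH-4}, \eqref{eq:Pi}, \eqref{eq:new-pre-1}--\eqref{eq:new-pre-4}, and all coefficients in these relations are images of elements of ${\bf A}$ under ${\bf A}\to R$, the same relations hold for the elements $1\otimes P_j$ and $1\otimes T_i$ in $R\otimes_{\bf A}\mathscr{M}_{n,{\bf A}}(q)$. Hence there is a well-defined $R$-algebra homomorphism
$$\alpha:\mathscr{M}_{n,R}(q)\to R\otimes_{\bf A}\mathscr{M}_{n,{\bf A}}(q),\qquad P_j\mapsto 1\otimes P_j,\ T_i\mapsto 1\otimes T_i,$$
which is the canonical map already used in the proof of Proposition~\ref{prop:new-basis-R}.

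Next I will check that $\alpha$ sends basis to basis. The element $T_{(A,B,w)}=T_AP_kT_wT_B^{-1}$ is a monomial in the generators and in the inverses $T_i^{-1}=q^{-1}(T_i-(q-1))$. These inverse formulas are polynomial in the generators with coefficients coming from ${\bf A}$, so they are compatible with $\alpha$, and therefore $\alpha(T_{(A,B,w)})=1\otimes T_{(A,B,w)}$. By Proposition~\ref{prop:new-basis-R} applied to ${\bf A}$, the module $\mathscr{M}_{n,{\bf A}}(q)$ is free with basis $\{T_{(A,B,w)}\}$. Base change then makes $\{1\otimes T_{(A,B,w)}\}$ an $R$-basis of $R\otimes_{\bf A}\mathscr{M}_{n,{\bf A}}(q)$. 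By Proposition~\ref{prop:new-basis-R} applied to $R$, the set $\{T_{(A,B,w)}\}$ is an $R$-basis of $\mathscr{M}_{n,R}(q)$. An $R$-linear map carrying a basis bijectively onto a basis is an isomorphism, so $\alpha$ is an isomorphism of $R$-algebras.

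As an alternative, I could avoid Proposition~\ref{prop:new-basis-R} for $R$ and build an inverse directly. The map $\mathscr{M}_{n,{\bf A}}(q)\to\mathscr{M}_{n,R}(q)$ sending generators to generators is ${\bf A}$-bilinear in the appropriate sense, so it extends to $\beta:R\otimes_{\bf A}\mathscr{M}_{n,{\bf A}}(q)\to\mathscr{M}_{n,R}(q)$ with $\beta(r\otimes x)=r\bar x$. Since $\alpha$ and $\beta$ agree with the identity on generators, both composites are the identity.

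The only real obstacle is a logical one. The proof of Proposition~\ref{prop:new-basis-R} for general $R$ uses this very canonical map, so the argument must not be circular. The linear independence of $\{T_{(A,B,w)}\}$ over ${\bf A}$, which comes from comparing dimensions with $\mH$ over $\mathbb{C}(q)$, is proved independently of the present statement. For the corollary itself, the spanning property (Lemma~\ref{lem:new-basis}) together with surjectivity of $\alpha$ and the basis argument above suffices.
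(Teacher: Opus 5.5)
Your main argument is correct and matches what the paper does implicitly. The paper reads the corollary off from the proof of Proposition~\ref{prop:new-basis-R}, where the canonical map $P_j\mapsto 1\otimes P_j$, $T_i\mapsto 1\otimes T_i$ carries the spanning set $\{T_{(A,B,w)}\}$ of $\mathscr{M}_{n,R}(q)$ onto the basis $\{1\otimes T_{(A,B,w)}\}$. You are right that this is not circular, since that step only uses the ${\bf A}$-linear independence coming from the dimension count over $\mathbb{C}(q)$.

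Your alternative, building the inverse $\beta(r\otimes x)=r\bar{x}$ from the ${\bf A}$-balanced map, is also valid. It has the advantage of needing no freeness at all: it is just the compatibility of a presentation over ${\bf A}$ with base change.
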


\begin{theorem}\label{thm:new-pre-R} 
Suppose $R$ is a commutative ring with $q\in R$ invertible. Then $\mHR\cong \mathscr{M}_{n,R}(q)$. Hence $\mHR$ is the $R$-algebra generated by $P_1,\ldots,P_n, T_1,\ldots,T_{n-1}$ subject to the relations \eqref{eq:mH-2}-\eqref{eq:mH-4}, \eqref{eq:Pi} and \eqref{eq:new-pre-1}-\eqref{eq:new-pre-4} and moreover $\mHR$ is a free $R$-module with a basis $\{T_{(A,B,w)}\mid (A,B,w)\in\Gamma^{(n)}\}$. 
\end{theorem}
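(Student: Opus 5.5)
Since $\Theta_R$ of \eqref{eq:map-Phi} is already a surjective algebra homomorphism, the plan is to produce an inverse homomorphism $\Xi_R\colon\mHR\to\mathscr{M}_{n,R}(q)$ instead of computing $\ker(\Theta_R)$ directly. Define it on generators by
$$
\Xi_R(T_0)=q(1-P_1)-1,\qquad \Xi_R(T_i)=T_i\quad(1\le i\le n-1),
$$
which is the formula obtained by solving \eqref{eq:P1} for $T_0$. Then $\Theta_R\circ\Xi_R$ fixes $T_0,\dots,T_{n-1}$. In the other direction, $\Xi_R\circ\Theta_R$ fixes $P_1$ and every $T_i$. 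By \eqref{eq:Pi}, each $P_i$ with $i\ge2$ is a polynomial in $P_1,T_1,\dots,T_{i-1}$, so $\Xi_R\circ\Theta_R$ fixes all the generators of $\mathscr{M}_{n,R}(q)$. The two maps are therefore mutually inverse, and the last two claims of the theorem (the presentation and the free basis $\{T_{(A,B,w)}\}$) follow at once from Proposition \ref{prop:new-basis-R}.

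The work is in checking that $\Xi_R$ is well defined, i.e. that the images satisfy \eqref{eq:mH-1}--\eqref{eq:mH-6} in $\mathscr{M}_{n,R}(q)$.
\begin{itemize}
\item \eqref{eq:mH-2} and \eqref{eq:mH-4} hold because they are imposed in $\mathscr{M}_{n,R}(q)$.
\item \eqref{eq:mH-1} is the computation $(T_0-(q-1))(T_0+1)=-q^2(1-P_1)P_1$ from the proof of Lemma \ref{lem:new-rel} run backwards, and it vanishes by $P_1^2=P_1$.
\item \eqref{eq:mH-3} with $i=0$ and $j\ge2$ follows from $P_1T_j=T_jP_1$, which is \eqref{eq:new-pre-3}.
\end{itemize}

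The main obstacle is \eqref{eq:mH-5} and \eqref{eq:mH-6}. Substitute $T_0=q-1-qP_1$ and expand both sides as noncommutative polynomials in $P_1$ and $T_1$. Reduce them using $T_1^2=(q-1)T_1+q$, $P_1^2=P_1$, and the rewriting $P_1T_1P_1=(q-1)P_1-qP_2$ from \eqref{eq:Pi-recursive}. The remaining words $P_1T_1P_1T_1$, $T_1P_1T_1P_1$ and $P_1T_1P_1T_1P_1$ are reduced further by $P_2T_1=T_1P_2=-P_2$ (\eqref{eq:new-pre-4}) and $P_2P_1=P_1P_2=P_2$ (\eqref{eq:new-pre-2}). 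The target is that both sides of \eqref{eq:mH-5}, and likewise of \eqref{eq:mH-6}, reduce to the same combination of $1,T_1,P_1,T_1P_1,P_1T_1,T_1P_1T_1,P_2$. This is essentially the computation in the proof of Lemma \ref{lem:new-rel} read in reverse, with \eqref{eq:P2T1-commute} playing the key role, and I expect it to be routine but tedious bookkeeping.

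Should this direct verification become unwieldy, a fallback is to argue through ranks. By Proposition \ref{prop:new-basis-R} and Corollary \ref{cor:R-base}, $\mathscr{M}_{n,R}(q)\cong R\otimes_{\bf A}\mathscr{M}_{n,{\bf A}}(q)$, and $\mHR\cong R\otimes_{\bf A}\mathscr{H}_{n,{\bf A}}(q)$ directly from the presentations, so $\Theta_R=\mathrm{id}_R\otimes\Theta_{\bf A}$. The proof of Proposition \ref{prop:new-basis-R} showed that $\Theta_{\mathbb{C}(q)}$ is an isomorphism. Hence $\Theta_{\bf A}$ is injective, since $\mathscr{M}_{n,{\bf A}}(q)$ is ${\bf A}$-free and so torsion-free, and it is surjective. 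Therefore $\Theta_{\bf A}$ is an isomorphism, and base change gives that $\Theta_R$ is an isomorphism as well.
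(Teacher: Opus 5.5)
Your proposal is correct. Your fallback paragraph is essentially the paper's own proof. The paper never checks \eqref{eq:mH-1}--\eqref{eq:mH-6} in $\mathscr{M}_{n,R}(q)$. Instead it gets $\ker\Theta_{\mathbb{C}(q)}=0$ from the dimension count in Proposition~\ref{prop:new-basis-R} and deduces that $\Theta_{\bf A}$ is an isomorphism. It then obtains $\Xi_R$ as the base change of $\Xi_{\bf A}=\Theta_{\bf A}^{-1}$.

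Your main route is genuinely different: you verify the defining relations of $\mHR$ directly for $T_0\mapsto q(1-P_1)-1$. The computation you left as routine bookkeeping does close. Write $c=q-1$, $P=P_1$, $T=T_1$. The only inputs needed are $P^2=P$, $T^2=cT+q$, $PTP=cP-qP_2$ (from \eqref{eq:Pi}) and $P_2T=TP_2=-P_2$ (from \eqref{eq:new-pre-4}). With these, both sides of \eqref{eq:mH-5} reduce to
\[
c^3T+c^2q-cq\,TPT+cq\,PT-cq^2P+q^3P_2 .
\]
Both sides of \eqref{eq:mH-6} reduce to the same expression with $PT$ replaced by $TP$. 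Two minor corrections to your sketch: the word $PTPTP$ never occurs, since each side contains $T_0$ at most twice, and $P_2P_1=P_2$ is not needed.

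What your route buys is that $\mHR\cong\mathscr{M}_{n,R}(q)$ holds for every $R$ by a finite, self-contained calculation. It does not depend on the dimension formula of Lemma~\ref{lem:old-basis} or on any base change. That input is then needed only for the freeness and basis assertion, which you correctly import from Proposition~\ref{prop:new-basis-R}.

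The paper's route avoids the \eqref{eq:mH-5}/\eqref{eq:mH-6} bookkeeping altogether. The price is that even the presentation statement depends on the known dimension of $\mH$ over $\mathbb{C}(q)$. It also needs torsion-freeness of $\mathscr{M}_{n,{\bf A}}(q)$ to descend from $\mathbb{C}(q)$ to ${\bf A}$.
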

\begin{proof}
By Proposition \ref{prop:new-basis-R} and  \eqref{eq:map-Phi-1}, 
it suffices to prove that $\Theta_R$ is injective. 
By the proof of Proposition \ref{prop:new-basis-R} we obtain that $\ker{\Theta_{{\bf A}}}=0$ as $\ker{\Theta_{{\bf A}}}\subset \ker{\Theta_{\mathbb{C}(q)}}$. Then $\Theta_{\bf A}: \mathscr{M}_{n,{\bf A}}(q)\rightarrow \mathscr{H}_{n,{\bf A}}(q)$ is an isomorphism and hence there exists the inverse homomorphism $\Xi_{{\bf A}}: \mathscr{H}_{n,{\bf A}}(q)\rightarrow \mathscr{M}_{n,{\bf A}}(q)$ satisfying  
\begin{equation}\label{eq:Psi-A}
\Xi_{{\bf A}}(T_0)=q(1-P_1)-1, \quad \Xi_{{\bf A}}(T_i)=T_i, \quad 1\leq i\leq n-1
\end{equation}
Then $\Xi_{{\bf A}}$ induces a homomorphism $\Xi_R: \mathscr{H}_{n,R}(q)\rightarrow R\otimes_{\bf A}\mathscr{H}_{n,{\bf A}}(q)\rightarrow R\otimes_{\bf A}\mathscr{M}_{n,{\bf A}}(q)\cong \mathscr{M}_{n,R}(q)$ by Corollary \ref{cor:R-base} which satisfies $\Xi_R(T_0)=q(1-P_1)-1$ and  $\Xi_{R}(T_i)=T_i$ due to \eqref{eq:Psi-A}. Then clearly $\Xi_R\Theta_R=\text{id}$ and hence $\Theta_R$ is injective. This proves the assertion that $\Theta_R$ is an isomorphism. Then the theorem is proved. 
\end{proof}

\begin{remark}\label{rem:isomorphism}
Over the field $\mathbb{C}(q)$, the mirabolic Hecke algebra $\mH$ is semisimple. Rosso \cite{Ro1} established an abstract isomorphism between $\mH$ and the $q$-rook algebra $\mathscr{R}_n(q)$ by decomposing both into direct sums of matrix algebras and comparing the dimensions of their respective irreducible representations. Theorem \ref{thm:new-pre-R} provides a new presentation for the mirabolic Hecke algebra $\mathscr{H}_{n,R}(q)$ over an arbitrary commutative ring $R$. By comparing the relations \eqref{eq:Pi}--\eqref{eq:new-pre-4} with the presentation of $\mathscr{R}_n(q)$ given in \cite[(2.1)(A4)--(A7)]{Ha}, one observes that the two sets of relations are related by interchanging the parameters $q$ and $-1$. Within the Hecke algebra $\mathcal{H}_n(q)$---viewed as a subalgebra of both $\mH$ and $\mathscr{R}_n(q)$---this correspondence is realized via the Mullineux involution $T_i \mapsto -q^{-1}T_i^{-1}$ for $1 \leq i \leq n-1$. 
However, it appears that this involution cannot be extended to an isomorphism between $\mHR$ and $\mathscr{R}_{n,R}(q)$ that recovers Rosso's isomorphism over $\mathbb{C}(q)$. We note that by \cite[Theorem 4.12]{Ro1}, By \cite[Theorem 4.12]{Ro1}, $\mH$ is isomorphic to the quotient of cyclotomic Hecke algebra $\mathcal{H}_n(q;0,1)$ with the cyclotomic condition $X_1(X_1-1)=0$. Inspired by \cite{JL}, a potential remedy may involve a systematic change of parameters; we shall address this construction as well as the cellular structure of $\mathscr{H}_{n,R}(q)$ in a forthcoming paper.
\end{remark}

\subsection{The space of trace functions and character table of $\mH$.} Let $R$ be a commutative ring with $q\in R$ invertible. For a
$R$-algebra $\mc{H}$ which is a free $R$-module, a {\em trace
function} on $\mc{H}$ is an $R$-linear map $\phi: \mc{H}\rightarrow
R$ satisfying
$$
\phi(hh')=\phi(h'h) \text{ for } h, h'\in\mc{H}. 
$$
For $h, h'\in\mc H$, define their commutator by $[h,h']=hh'-h'h$,
and let $[\mc H,\mc H]$ be the $R$-submodule of $\mc H$ spanned by
all commutators (not super-commutators!). Observe that a linear map
$\phi:\mc H\rightarrow R$ is a trace function if and only if ${[\mc H,\mc H]}\subseteq {\rm Ker}\phi$.
Thus, the space of trace functions on $\mc H$ is canonically
isomorphic to the dual space $\Hom_{R}({(\mc H/[\mc H,\mc H])},
R)$ of ${\mc H}/{[\mc H,\mc H]}$.

Recall ${\bf A}=\mathbb{Z}[q,q^{-1}]\subseteq\C(q)$, and  $\mH$ is $\bf A$-free and
$\mH=\C(q)\otimes_{\bf A}\mathscr{H}_{n,\bf A}(q)$. For a composition $\ga=(\ga_1,\ldots,\ga_{\ell})$ of $n$, let
\begin{align}
T_{{\ga,j}}&= T_{\ga_1+\ldots+\ga_{j-1}+1}
T_{\ga_1+\ldots+\ga_{j-1}+2} \cdots T_{\ga_1+\cdots+\ga_{j-1}+\ga_j-1},
 \quad 1\leq j\leq \ell,
   \notag \\
T_{w_{\ga}}&=T_{{\ga,1}}T_{{\ga,2}}\cdots T_{{\ga,{\ell}}}.
  \label{eq:Twga}
\end{align}
Equivalently, $T_{w_{\ga}}$ is the element corresponding to the permutation $w_\ga\in\mathfrak{S}_n$ with 
\begin{equation}  \label{eq:wha}
w_{\ga}=(1,\ldots,\ga_1)(\ga_1+1,\ldots, \ga_1+\ga_2)
\cdots(\ga_1+\cdots+\ga_{\ell-1}+1,\ldots, \ga_1+\cdots+\ga_{\ell}).
\end{equation}
If $\mu=(\mu_1,\ldots,\mu_\ell)$ is a composition with $0\leq |\mu|=k\leq n$, then ${\mu}^{\uparrow n}=(\underbrace{1,\ldots, 1}_{n-k}, \mu_1,\ldots,\mu_\ell)$ is a composition of $n$ 
and accordingly we have 
\begin{align*}
w_{\mu^{\uparrow n}} &= (1)(2)\cdots(n-k) \cdot (n-k+1,\ldots,n-k+\mu_1)(n-k+\mu_1+1,\ldots,n-k+\mu_1+\mu_2) \cdot \notag \\
&\quad \cdots (n-k+\mu_1+\cdots+\mu_{\ell-1}+1,\ldots, n-k+\mu_1+\cdots\mu_{\ell-1}+\mu_\ell), \notag \\
\end{align*}
and according
\begin{align}
T_{w_{\mu^{\uparrow n}}} &= T_{\mu^{\uparrow n}, 1}T_{\mu^{\uparrow n}, 2}\cdots T_{\mu^{\uparrow n}, n-k} \cdot T_{\mu^{\uparrow n}, n-k+1}T_{\mu^{\uparrow n},n-k+2}\cdots T_{\mu^{\uparrow n},n-k+\ell} \notag \\
&= T_{\mu^{\uparrow n}, n-k+1}T_{\mu^{\uparrow n},n-k+2}\cdots T_{\mu^{\uparrow n},n-k+\ell} \label{eq:w-mu-T}
\end{align}

as $T_{{\mu^{\uparrow n}}, a}=1$ for $1\leq a\leq n-k$. 
Set 
\begin{equation}\label{eq:hat-T-mu}
\widehat{T}^{(n)}_\mu=P_{n-|\mu|}T_{w_{{\mu^{\uparrow n}}}}=P_{n-|\mu|}T_{{\mu^{\uparrow n}}, n-k+1}T_{{\mu^{\uparrow n}},n-k+2}\cdots T_{{\mu^{\uparrow n}},n-k+\ell}, 
\end{equation}
where we use the convention $P_0=1$. 
\begin{lemma}\label{lem:T-mu-la}
Suppose $\mu$ is a composition with $0\leq |\mu|\leq n$. Let $\la$ be the partition obtained by rearrangement of the parts of $\mu$. Then 
$$
\widehat{T}^{(n)}_{\mu}\equiv\widehat{T}^{(n)}_\la\mod [\mathscr{H}_{n,\bf A}(q), \mathscr{H}_{n,\bf A}(q)]. 
$$
\end{lemma}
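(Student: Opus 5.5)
The plan is to reduce to swapping two adjacent blocks of $\mu$, since any rearrangement is a composition of adjacent transpositions of parts. Let $k=|\mu|$ and $m=n-k$. By \eqref{eq:w-mu-T}, $\widehat{T}^{(n)}_\mu=P_m\,T_{\mu^{\uparrow n},m+1}\cdots T_{\mu^{\uparrow n},m+\ell}$, and every factor $T_{\mu^{\uparrow n},m+j}$ is a product of generators $T_i$ with $i\ge m+1$. By \eqref{eq:new-pre-3}, $P_m$ commutes with all $T_i$ for $i>m$. Hence $P_m$ commutes with the whole subalgebra $\mathcal{H}'$ generated by $T_{m+1},\dots,T_{n-1}$. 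This subalgebra is a copy of the Hecke algebra $\mathcal{H}_{k}(q)$ on the letters $m+1,\dots,n$. We write $\widehat{T}^{(n)}_\mu=P_m\,T'_{w_\mu}$, where $T'_{w_\mu}\in\mathcal{H}'$ is the standard element of the Coxeter element $w_\mu$ of the parabolic subgroup of $\mathfrak{S}_{\{m+1,\dots,n\}}$ for the composition $\mu$.

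The key step is the classical fact from the Geck--Pfeiffer theory \cite{GP1}. In the Hecke algebra $\mathcal{H}_k(q)$ over ${\bf A}$, if $\mu$ and $\lambda$ are compositions with the same multiset of parts, then $T_{w_\mu}-T_{w_\lambda}\in[\mathcal{H}_k(q),\mathcal{H}_k(q)]$. Concretely, for two adjacent parts $(a,b)$ the element $w_{(a,b)}$ is conjugate to $w_{(b,a)}$ by a distinguished element $x$ with $\ell(x w_{(a,b)})=\ell(x)+\ell(w_{(a,b)})$ and $x w_{(a,b)}x^{-1}=w_{(b,a)}$, length preserved. Then $T_xT_{w_{(a,b)}}=T_{w_{(b,a)}}T_x$, which gives $T_{w_{(b,a)}}\equiv T_x^{-1}T_{w_{(b,a)}}T_x=T_{w_{(a,b)}}$ modulo commutators. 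Invertibility of $T_x$ over ${\bf A}$ is fine since $q$ is a unit. Multiplying by the other blocks, which commute with everything involved because the supports are disjoint, handles a general adjacent swap inside $\mu$. I take this either as known from \cite{GP1} or verify it via this conjugation.

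Next I transfer the congruence into $\mathscr{H}_{n,{\bf A}}(q)$ while keeping the factor $P_m$. We have
\[
P_m[h,h']=[P_mh,h']
\]
for $h,h'\in\mathcal{H}'$, since $P_m$ commutes with $h'$. So if $T'_{w_\mu}-T'_{w_\lambda}=\sum [h_i,h'_i]$ with $h_i,h_i'\in\mathcal{H}'$, then $\widehat{T}^{(n)}_\mu-\widehat{T}^{(n)}_\lambda=\sum[P_mh_i,h'_i]\in[\mathscr{H}_{n,{\bf A}}(q),\mathscr{H}_{n,{\bf A}}(q)]$. More directly, one can write $P_mT'_{w_{(b,a)}}T_x=T_xP_mT'_{w_{(a,b)}}$ with $T_x\in\mathcal{H}'$ invertible. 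Then $\widehat{T}^{(n)}_{\mu'}=T_x(P_mT'_{w_\mu})T_x^{-1}$, and $[T_x,P_mT'_{w_\mu}T_x^{-1}]$ is a single commutator.

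The main obstacle is checking the conjugating element for an adjacent block swap carefully. The aim is that $T_{w_{(a,b)}}$ and $T_{w_{(b,a)}}$ are genuinely conjugate by some $T_x$, not merely conjugate up to lower terms. I would first try $x$ equal to the minimal-length permutation shifting the block of size $a$ past the block of size $b$, and check the length-additivity condition. Should it fail, the fallback is to cite \cite[Section 8.2]{GP1}. That result gives $T_w\equiv T_{w'}$ modulo commutators whenever $w,w'$ are cuspidal-class elements of minimal length in the same conjugacy class, and it applies over ${\bf A}$. With that fact, the rest follows from the commutation $P_mT_i=T_iP_m$ for $i>m$ as above.
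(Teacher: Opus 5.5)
Your proposal is correct and follows the paper's proof. The paper reduces to the Hecke-algebra congruence $T_{w_{\mu^{\uparrow n}}}\equiv T_{w_{\la^{\uparrow n}}}$ modulo commutators of the subalgebra generated by $T_{n-k+1},\dots,T_{n-1}$, citing Ram and Geck--Pfeiffer for it, and then transfers it using $P_{n-k}[z_1,z_2]=[P_{n-k}z_1,z_2]$. Your own block-swap argument also goes through: the block-interchanging permutation $x$ is increasing on each block, hence minimal in $xW_J$, so $T_xT_{w_{(a,b)}}=T_{w_{(b,a)}}T_x$, and this makes the cited fact self-contained.
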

\begin{proof}
Let $|\mu|=k=|\la|$. 
By \eqref{eq:w-mu-T} we observe that both ${T}_{w_{{\mu^{\uparrow n}}}}$ and ${T}_{w_{{\la}^{\uparrow n}}}$ belong to the subalgebra $\mathcal{H}'_{k,{\bf A}}(q)$ of $\mathscr{H}_{n,{\bf A}}(q)$ generated by $T_{n-k+1},T_{n-k+2},\ldots, T_{n-1}$. As $\mathcal{H}'_{k,{\bf A}}(q)\cong \mathcal{H}_{k,{\bf A}}(q)$, it is known (see \cite[Theorem 5.1]{Ra} or \cite[Section 8.2]{GP2}) that 
${T}_{w_{{\mu^{\uparrow n}}}}\equiv {T}_{w_{{\la}^{\uparrow n}}}\mod [\mathcal{H}'_{k,{\bf A}}(q),\mathcal{H}'_{k,{\bf A}}(q)]$. This means there exists $z\in [\mathcal{H}'_{k,{\bf A}}(q),\mathcal{H}'_{k,{\bf A}}(q)]$ 
such that ${T}_{w_{{\mu^{\uparrow n}}}}={T}_{w_{{\la}^{\uparrow n}}}+z$. Then 
\begin{equation}\label{eq:T-mu-la}
P_{n-k}{T}_{w_{{\mu^{\uparrow n}}}}=P_{n-k}{T}_{w_{{\la}^{\uparrow n}}}+P_{n-k}z
\end{equation} 
On the other hand, by \eqref{eq:new-pre-3} observe that $P_{n-k}$ commutes with $T_{n-k+1},T_{n-k+2},\ldots, T_{n-1}$ and hence $P_{n-k}$ commutes with every element in $\mathcal{H}'_{k,{\bf A}}(q)$. Then 
for $z_1,z_2\in \mathcal{H}'_{k,{\bf A}}(q)$ we obtain $P_{n-k}[z_1,z_2]=P_{n-k}(z_1z_2-z_2z_1)=P_{n-k}z_1z_2-z_2P_{n-k}z_1=[P_{n-k}z_1,z_2]$ and hence $P_{n-k}[z_1,z_2]\in [\mathscr{H}_{n,{\bf A}}(q), \mathscr{H}_{n,{\bf A}}(q)]$. This together with \eqref{eq:T-mu-la} leads to $P_{n-k}{T}_{w_{{\mu^{\uparrow n}}}}-P_{n-k}{T}_{w_{{\la}^{\uparrow n}}}=P_{n-k}z\in [\mathscr{H}_{n,{\bf A}}(q), \mathscr{H}_{n,{\bf A}}(q)]$. 
Then by \eqref{eq:hat-T-mu} the lemma is proved. 
\end{proof}
Let $\mathscr{H}'_{n-1,{\bf A}}(q)$ be the subalgebra of $\mathscr{H}_{n,{\bf A}}(q)$ generated by $P_2,\ldots, P_n$ and $T_2,\ldots, T_{n-1}$ and let $\iota: \mathscr{H}_{n-1,{\bf A}}(q)\longrightarrow\mathscr{H}'_{n-1,{\bf A}}(q)$ be the natural isomorphism: 
\begin{equation}\label{eq:iota}
\begin{aligned}
\iota: \mathscr{H}_{n-1,{\bf A}}(q)&\longrightarrow\mathscr{H}'_{n-1,{\bf A}}(q) \\
T_i&\mapsto T_{i+1}, P_j\mapsto P_{j+1}, \quad 1\leq i\leq n-2, 1\leq j\leq n-1
\end{aligned}
\end{equation}
Then by \eqref{eq:w-mu-T}  and \eqref{eq:TABw} it is straightforward to verify
\begin{equation}\label{eq:iota-Tw}
\iota(T_{w_{\mu^{\uparrow n-1}}})=T_{w_{\mu^{\uparrow n}}}. 
\end{equation}
as well as 
\begin{equation}\label{eq:iota-image}
\iota(T_{(A,B,w)})=
\left\{
\begin{array}{ll}
T_{w'},&\text{ if }|A|=|B|=0,\\
T_{(A',B',w')}, &\text{ if }1\leq |A|=|B|\leq n-1
\end{array}
\right.
\end{equation}
where $A'=\{1\leq a'_1<a'_2<\cdots<a'_{k+1}\leq n\},B'=\{1\leq b'_1<b'_2<\cdots<b'_{k+1}\leq n\}, w'=s_{i_1+1}\cdots s_{i_t+1}$ with $a'_1=1,a'_j=a_j+1, b'_1=1,b'_j=b_j+1$ if $A=\{1\leq a_1<a_2<\cdots a_k\leq n-1\}$, 
$B=\{1\leq b_1<b_2<\cdots b_k\leq n-1\}$ and $w=s_{i_1}s_{i_2}\cdots s_{i_t}\in \mathfrak{S}_{n-1}$. 
\begin{lemma}\label{lem:rho-A}
(1)  $P_1 \mathscr{H}_{n,{\bf A}}(q)P_1=P_1\mathscr{H}'_{n-1,{\bf A}}(q)$ and moreover $P_1\mathscr{H}'_{n-1,{\bf A}}(q)$ is a free $R$-module with basis $\{P_1\iota(T_{(A,B,w)})\mid (A,B,w)\in\Gamma^{(n-1)}\}$.\\

(2)  The map $\rho_{\bf A}: \mathscr{H}_{n-1,{\bf A}}(q)\rightarrow P_1 \mathscr{H}_{n,{\bf A}}(q)P_1$ defined via 
\begin{equation}\label{eq:rho-A}
\rho_{\bf A}(T_i)=P_1T_{i+1}=T_{i+1}P_1,\quad \rho_{\bf A}(P_j)=P_{j+1}, \text{ for }1\leq i\leq n-2, 1\leq j\leq n-1. 
\end{equation}
is an isomorphism. 

\end{lemma}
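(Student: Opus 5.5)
We use the basis $\{T_{(A,B,w)}\}$ of Theorem~\ref{thm:new-pre-R} together with the relations \eqref{eq:new-pre-1}--\eqref{eq:new-pre-4}. For (1), the inclusion $P_1\mathscr{H}'_{n-1,{\bf A}}(q)\subseteq P_1\mathscr{H}_{n,{\bf A}}(q)P_1$ holds because $P_1$ commutes with every generator of $\mathscr{H}'_{n-1,{\bf A}}(q)$. Indeed, $P_1T_j=T_jP_1$ for $j\ge2$ by \eqref{eq:new-pre-3}, and $P_1P_j=P_jP_1$ by \eqref{eq:new-pre-2}. Hence $P_1h=P_1hP_1$ for $h\in\mathscr{H}'_{n-1,{\bf A}}(q)$.

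For the reverse inclusion it suffices to compute $P_1T_{(A,B,w)}P_1=P_1T_AP_kT_wT_B^{-1}P_1$ for every $(A,B,w)\in\Gamma_k$ and show it lies in $P_1\mathscr{H}'_{n-1,{\bf A}}(q)$. We split on whether $1\in A$.

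If $1\in A$, then $a_1=1$ and $T_A=T_{a_2,2}\cdots T_{a_k,k}$. These factors only involve $T_j$ with $j\ge2$, so they commute with $P_1$. Since $P_1P_k=P_k$ for $k\ge1$, the left factor becomes $T_AP_k$ with $T_A\in\mathscr{H}'_{n-1}$.

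If $1\notin A$, then $T_{a_1,1}=T_{a_1-1}\cdots T_1$. Here we use $P_1T_{a_1-1}\cdots T_2T_1P_k$ and move $P_1$ across $T_j$ for $j\ge2$. We are left with $P_1T_1$ adjacent to a factor that eventually meets $P_k$, or $P_1$ itself when $k=0$. In the case $k=0$ we have $P_0=1$, and $P_1T_{w}P_1$ must be reduced via \eqref{eq:Pi-recursive}. For a generic $w\in\mathfrak{S}_n$, write $T_w=T_{w_1}T_1T_{w_2}$ or $T_w\in\langle T_2,\dots\rangle$ using the parabolic double coset decomposition with respect to $\mathfrak{S}'_{n-1}=\langle s_2,\dots,s_{n-1}\rangle$. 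The double cosets $\mathfrak{S}'_{n-1}\backslash\mathfrak{S}_n/\mathfrak{S}'_{n-1}$ have representatives $1$ and $s_1$, so every $T_w$ is either in $\mathscr{H}'$ or of the form $hT_1h'$ with $h,h'\in\mathcal{H}'$. Then $P_1hT_1h'P_1=hP_1T_1P_1h'=h((q-1)P_1-qP_2)h'\in P_1\mathscr{H}'$.

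The same double-coset strategy handles $k\ge1$. Write the element as $P_1xP_kyP_1$. Since $P_k=P_1P_k=P_kP_1$, this equals $(P_1xP_1)P_k(P_1yP_1)$. Reduce each of $x,y\in\mathcal{H}_n(q)$ via the double coset decomposition as above. This is cleaner than case analysis on $A$ and $B$, and it gives $P_1\mathscr{H}_nP_1=P_1\mathscr{H}'_{n-1}$.

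For freeness, the elements $P_1\iota(T_{(A,B,w)})$ are, by \eqref{eq:iota-image} and $P_1P_{k+1}=P_{k+1}$, exactly $T_{(A',B',w')}$ for $k\ge1$. For $k=0$ they are $P_1T_{w'}=T_{(\{1\},\{1\},w')}$. These are distinct members of the basis of Theorem~\ref{thm:new-pre-R}, namely those with $1\in A'\cap B'$, so they are linearly independent. They span $P_1\mathscr{H}'_{n-1}$ because $\iota$ is onto $\mathscr{H}'_{n-1}$ and $\mathscr{H}_{n-1}$ is spanned by its standard basis.

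For (2), we first check that $\rho_{\bf A}$ is a well-defined homomorphism by verifying the defining relations of $\mathscr{M}_{n-1,{\bf A}}(q)\cong\mathscr{H}_{n-1,{\bf A}}(q)$ (Theorem~\ref{thm:new-pre-R}) on the images. The identity of $P_1\mathscr{H}_nP_1$ is $P_1$, and the relations hold for the shifted generators $T_{i+1},P_{j+1}$ in $\mathscr{H}_n$. Multiplying by the central idempotent $P_1$ of $\mathscr{H}'$ preserves them. One point needs care: in the quadratic relation, the constant $q$ becomes $qP_1$. In \eqref{eq:Pi} for $j=1$, the relation reads $P_2=-P_1T_1^{-1}P_1$, and we must check that this holds shifted, i.e. $P_3=-P_2T_2^{-1}P_2$, which is \eqref{eq:Pi} itself. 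So $\rho_{\bf A}=P_1\cdot\iota$. Its image is $P_1\mathscr{H}'_{n-1}=P_1\mathscr{H}_nP_1$ by (1), and it sends the standard basis bijectively onto the basis found in (1). Hence $\rho_{\bf A}$ is an isomorphism.

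The main obstacle is the double-coset reduction step in (1). We must confirm that $T_w$ factors as $hT_1h'$ with $h,h'$ in the Hecke algebra of $\langle s_2,\ldots\rangle$, which is a standard consequence of reduced expressions for double coset representatives.
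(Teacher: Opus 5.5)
Your proof is correct. For the freeness statement and for part (2) it matches the paper's argument: the elements $P_1\iota(T_{(A,B,w)})$ are exactly the distinct standard basis elements $T_{(A',B',w')}$ of $\mathscr{H}_{n,{\bf A}}(q)$ with $1\in A'\cap B'$, and $\rho_{\bf A}$ carries the standard basis of $\mathscr{H}_{n-1,{\bf A}}(q)$ bijectively onto them.

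The real difference is in the first assertion of (1). The paper does not prove it; it only points to a strategy parallel to [DHP, Lemma 5.3] for the $q$-rook monoid algebra. You give a self-contained argument instead:
\begin{itemize}
\item You write a spanning element as $P_1xP_kyP_1=(P_1xP_1)P_k(P_1yP_1)$, with $x,y$ in the Iwahori--Hecke subalgebra.
\item The two double cosets in $\mathfrak{S}'_{n-1}\backslash\mathfrak{S}_n/\mathfrak{S}'_{n-1}$ have representatives $1$ and $s_1$. The length-additive factorization $w=us_1v$ then gives $T_w=T_uT_1T_v$, with $T_u,T_v$ commuting with $P_1$.
\item Together with $P_1T_1P_1=(q-1)P_1-qP_2$, this puts each $P_1xP_1$ in $P_1\mathscr{H}'_{n-1,{\bf A}}(q)$.
\end{itemize}
This argument is elementary and works over ${\bf A}$ directly.

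You also make explicit something the paper takes for granted: $\rho_{\bf A}$ is well defined, because it factors as $\iota$ followed by $h\mapsto P_1h$, and the latter is an algebra map since $P_1$ is an idempotent commuting with $\mathscr{H}'_{n-1,{\bf A}}(q)$.

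One presentational point: your exploratory case split on whether $1\in A$ is incomplete as written. The double-coset argument supersedes it, so you should simply delete it.
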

\begin{proof}
By a strategy parallel to the proof of \cite[Lemma 5.3]{DHP}, it is straightforward to verify the first assertion of part (1). Clearly $P_1\mathscr{H}'_{n-1,{\bf A}}(q)$ is a $R$-module spanned by the set $\{P_1\iota(T_{(A,B,w)})\mid (A,B,w)\in\Gamma^{(n-1)}\}$. Meanwhile by \eqref{eq:iota-image} we have 
\begin{equation*}
\begin{aligned}
P_1\iota(T_{(A,B,w)})=&P_1T_{(A',B',w')}\\
=&\left\{
\begin{array}{ll}
P_1T_{w'}=T_{(\{1\},\{1\}, w')},&\text{ if } |A|=|B|=0,\\
P_1T_{A'}P_{k+1}T_{w'}T^{-1}_{B'}=T_{A'}P_1P_{k+1}T_{w'}T^{-1}_{B'}&\\
=T_{A'}P_{k+1}T_{w'}T^{-1}_{B'}=T_{(A',B',w)},&\text{ if }|A|=|B|=k>0
\end{array}
\right.
\end{aligned}
\end{equation*}
for each $(A,B,w)\in\Gamma^{(n-1)}$. This implies the set $\{P_1\iota(T_{(A,B,w)})\mid (A,B,w)\in\Gamma^{(n-1)}\}$ is $R$-linearly independent by Theorem \ref{thm:new-pre-R}.  This proves the second assertion of (1). 

By the first assertion of (1), it is obvious that  $\rho_{\bf A}$ is a surjective homomorphism. In addition, by \eqref{eq:rho-A} and \eqref{eq:iota-image} we observe that 
$$
\rho_{\bf A}(T_{(A,B,w)})=P_1\iota(T_{(A,B,w)}) \,\quad \text{ for }(A,B,w)\in\Gamma^{(n-1)}
$$
Then by the second assertion of (1), the homomorphism $\rho_{\bf A}$ sends a basis to a basis and hence it is injective. 
This proves the lemma. 
\end{proof}
\begin{theorem}\label{thm:classpoly}
For each $(A,B,w)\in \Gamma^{(n)}$, there exists $f_{(A,B,w)}^\la\in {\bf A}$ such that 
\begin{equation}\label{eq:commutator}
T_{(A,B,w)}\equiv \sum_{k=0}^n\sum_{\la\in\mc{P}_k}f^\la_{(A,B,w)}\widehat{T}^{(n)}_\la \mod [\mathscr{H}_{n,\bf A}(q), \mathscr{H}_{n,\bf A}(q)]. 
\end{equation}
\end{theorem}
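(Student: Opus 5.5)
Every basis element $T_{(A,B,w)}$ will be reduced modulo commutators to an element of the form $P_kh$ with $h$ in the Hecke subalgebra on the last $n-k$ strands; the Hecke case is then handled by the classical theory and transported through $\rho_{\bf A}$. We argue by induction on $n$; the case $n=0$ is trivial. Fix $(A,B,w)\in\Gamma_k$, so $T_{(A,B,w)}=T_AP_kT_wT_B^{-1}$.

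\emph{Step 1: moving $T_A$ around.} Since $T_A$ is invertible,
$$T_AP_kT_wT_B^{-1}\equiv P_kT_wT_B^{-1}T_A \mod [\mathscr{H}_{n,\bf A}(q),\mathscr{H}_{n,\bf A}(q)].$$
Thus it suffices to treat elements $P_kh$ with $h\in\mathcal{H}_{n,\bf A}(q)$.

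\emph{Step 2: the case $k=0$.} Here $P_0=1$ and $h$ lies in the Hecke algebra $\mathcal{H}_{n,\bf A}(q)$. By the classical result of Geck--Pfeiffer and Ram (\cite[Section 8.2]{GP2}, \cite[Theorem 5.1]{Ra}), $h$ is congruent, modulo commutators in $\mathcal{H}_{n,\bf A}(q)$, to an ${\bf A}$-combination of the elements $T_{w_\la}=\widehat{T}^{(n)}_\la$ with $\la\in\mc{P}_n$.

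\emph{Step 3: reduction to $P_1\mathscr{H}_{n,\bf A}(q)P_1$ when $k\ge1$.} Since $P_k=P_kP_1=P_1P_k$ by \eqref{eq:new-pre-2},
$$P_kh=P_1(P_kh)\equiv P_kh P_1=P_1P_khP_1.$$
This element lies in $P_1\mathscr{H}_{n,\bf A}(q)P_1$. By Lemma~\ref{lem:rho-A}, it equals $\rho_{\bf A}(x)$ for some $x\in\mathscr{H}_{n-1,\bf A}(q)$.

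\emph{Step 4: applying the induction hypothesis.} By induction, $x\equiv\sum_{\la}f_\la\widehat{T}^{(n-1)}_\la$ modulo $[\mathscr{H}_{n-1,\bf A}(q),\mathscr{H}_{n-1,\bf A}(q)]$, where $\la$ ranges over partitions with $|\la|\le n-1$. The map $\rho_{\bf A}$ is an algebra homomorphism, so it sends commutators to commutators $[\rho(a),\rho(b)]$, which lie in $[\mathscr{H}_{n,\bf A}(q),\mathscr{H}_{n,\bf A}(q)]$. It remains to compute the images of the basic elements. From \eqref{eq:rho-A} and \eqref{eq:iota-Tw},
$$\rho_{\bf A}(\widehat{T}^{(n-1)}_\la)=P_{n-|\la|}\,T_{w_{\la^{\uparrow n}}}=\widehat{T}^{(n)}_\la,$$
using $P_1P_j=P_j$. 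One must check that the convention $P_0=1$ is compatible: for $|\la|=n-1$ we get $\rho(T_{w_{\la^{\uparrow n-1}}})=P_1T_{w_{\la^{\uparrow n}}}=\widehat{T}^{(n)}_\la$. This completes the induction, with all coefficients in ${\bf A}$.

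The only delicate points are that Step 3 must genuinely land in $P_1\mathscr{H}_{n,\bf A}(q)P_1$, which it does, and that $\rho_{\bf A}$ preserves the commutator submodule, which is immediate. I expect the main obstacle to be the bookkeeping in the identification $\rho_{\bf A}(\widehat{T}^{(n-1)}_\la)=\widehat{T}^{(n)}_\la$, especially the index shift in $T_{w_{\mu^{\uparrow n}}}$ and the edge case $|\la|=n-1$. Lemma~\ref{lem:T-mu-la} is not needed here, since the classical reduction already produces partitions.
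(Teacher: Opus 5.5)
Your proposal is correct and follows the paper's proof: you cyclically move $T_A$ to obtain $P_kh$ with $h\in\mathcal{H}_{n,\bf A}(q)$, use the Geck--Pfeiffer/Ram result when $k=0$, and for $k\geq 1$ rewrite $P_kh$ as $P_1P_khP_1$ modulo commutators. You then pull back the induction hypothesis through $\rho_{\bf A}$ via $\rho_{\bf A}(\widehat{T}^{(n-1)}_\la)=\widehat{T}^{(n)}_\la$; the only differences are cosmetic, namely your base case $n=0$ in place of the paper's explicit checks for $n=1,2$, and your explicit treatment of the edge case $|\la|=n-1$.
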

\begin{proof}
We shall prove the theorem by induction $n$. In the case $n=1$, the algebra $\mathscr{H}_{1,{\bf A}}(q)$ admits the basis $\{T_{(\emptyset,\emptyset, 1)},T_{(\{1\},\{1\},1)} \}$ and moreover $T_{(\{1\},\{1\},1)}=P_1=\widehat{T}^{(1)}_\mu$ with the composition $\mu=(0)$ and $T_{(\emptyset,\emptyset, 1)}=1=\widehat{T}^{(1)}_{\ga}$ with the composition $\ga=(1)$. So the theorem holds in the case $n=1$. 

In the case $n=2$, the algebra $\mathscr{H}_{2,{\bf A}}(q)$ admits the basis consisting of the following elements 
\begin{align}
T_{(\emptyset,\emptyset, 1)}=1, \quad T_{(\emptyset,\emptyset, s_1)}=T_1,\quad T_{(\{1\},\{1\}, 1)}=P_1,\quad T_{(\{1\},\{2\}, 1)}=P_1T_1^{-1},\notag\\
T_{(\{2\},\{1\}, 1)}=T_1P_1, \quad T_{(\{2\},\{2\}, 1)}=T_1P_1T_1^{-1},\quad T_{(\{1,2\},\{1,2\}, 1)}=T_1P_2T_1^{-1}. \notag
\end{align}
A direct calculation using \eqref{eq:hat-T-mu} and \eqref{eq:w-mu-T} gives rise to
$$
1=\widehat{T}^{(2)}_{\la}, \quad T_1=\widehat{T}^{(2)}_\mu,\quad P_1=\widehat{T}^{(2)}_\nu,\quad P_2=\widehat{T}^{(2)}_\gamma
$$
corresponding the four partitions $\la=(1,1), \mu=(2), \nu=(1), \gamma=(0)$ belonging to the set $\cup_{k=0}^2\mathcal{P}_k$. 
In addition, $P_1T_1^{-1}=P_1^2T_1^{-1}\equiv P_1T_1^{-1}P_1\equiv -P_2\equiv -\widehat{T}^{(2)}_{\gamma} \mod [\mathscr{H}_{2,\bf A}(q), \mathscr{H}_{2,\bf A}(q)]$ and similarly $T_1P_1=T_1P_1^2\equiv P_1T_1P_1\equiv (q-1)P_1-qP_2\equiv (q-1) \widehat{T}^{(2)}_{\nu}-q\widehat{T}^{(2)}_{\gamma} \mod [\mathscr{H}_{2,\bf A}(q), \mathscr{H}_{2,\bf A}(q)]$ by \eqref{eq:Pi-recursive}. Finally $T_1P_1T_1^{-1}\equiv P_1 \equiv \widehat{T}^{(2)}_{\nu}\mod [\mathscr{H}_{2,\bf A}(q), \mathscr{H}_{2,\bf A}(q)]$ and $T_1P_2T_1^{-1}\equiv P_2\equiv \widehat{T}^{(2)}_{\gamma}\mod [\mathscr{H}_{2,\bf A}(q), \mathscr{H}_{2,\bf A}(q)]$. Putting together we obtain that the theorem holds in the case $n=2$. 

Now assume the theorem holds for $n\leq m$. We shall show it also holds in the case $n=m+1$. By \eqref{eq:TABw}, if $|A|=|B|=0$, then $T_{(A,B,w)}=T_w$ with $w\in\mathfrak{S}_n$. Then it is known (see \cite[Theorem 5.1]{Ra} or \cite[Section 8.2]{GP2}) that there exists $f_w^\la\in{\bf A}$ such that $T_w\equiv \sum_{\la\in\mathcal{P}_n}f_w^\la T_{w_\la}\equiv \sum_{\la\in\mathcal{P}_n}f_w^\la \widehat{T}^{(n)}_{\la} \mod [\mathscr{H}_{n,\bf A}(q), \mathscr{H}_{n,\bf A}(q)]$ due to $\widehat{\la}=\la$ for $\la\in\mathcal{P}_n$ and \eqref{eq:hat-T-mu}. So the theorem holds in this case. Now assume $|A|=|B|=k>0$. Then 
\begin{align*}
T_{(A,B,w)}=&(T_AT_w)(P_kT_B^{-1})\equiv P_kT_B^{-1}T_AT_w \equiv P_1(P_1P_kT_B^{-1}T_AT_w)\\
&\equiv P_1P_kT_B^{-1}T_AT_wP_1\mod [\mathscr{H}_{n,\bf A}(q), \mathscr{H}_{n,\bf A}(q)] 
\end{align*}
since $P_1P_k=P_k$ due to \eqref{eq:new-pre-2}. Then by induction on $n$ as well Lemma \ref{lem:rho-A} (2) we obtain 
\begin{align}\label{eq:A-B-nonemp}
P_1P_kT_B^{-1}T_AT_wP_1\equiv \sum_{t=0}^{n-1}\sum_{\mu\in\mathcal{P}_t}g^\mu_{(A,B,w)}\rho_{\bf A}(\widehat{T}^{(n-1)}_\mu) \mod [\mathscr{H}_{n,\bf A}(q), \mathscr{H}_{n,\bf A}(q)]
\end{align}
for some $g^\mu_{(A,B,w)}\in {\bf A}$.  Clearly by \eqref{eq:hat-T-mu}, we have 
$$
\widehat{T}^{(n-1)}_\mu=P_{n-1-|\mu|}T_{w_{{\mu}^{\uparrow n-1}}}\in \mathscr{H}_{n-1,A}(q), 
$$
where we recall $P_0=1$. 
Then by  \eqref{eq:rho-A}, \eqref{eq:iota-image} and \eqref{eq:iota-Tw} we obtain 
\begin{equation}
\rho_{{\bf A}}(\widehat{T}^{(n-1)}_\mu)=P_1\iota(P_{n-1-|\mu|}T_{w_{{\mu}^{\uparrow n-1}}})=P_1P_{n-|\mu|}T_{w_{{\mu}^{\uparrow n}}}=P_{n-|\mu|}T_{w_{{\mu}^{\uparrow n}}}=\widehat{T}^{(n)}_{\mu}
\end{equation}
due to the fact $P_1P_{n-|\mu|}=P_{n-|\mu|}$ for each $\mu\in\mathcal{P}_k$ with $0\leq k\leq n-1$ by \eqref{eq:new-pre-2}. 
Putting together, the theorem is proved. 
\end{proof}

Theorem~\ref{thm:classpoly} has the following implication. 

\begin{theorem}\label{thm:basistrace}
$\mHA/[\mHA,\mHA]$ is a free $\bf A$-module, with a basis
consisting of the images of $\widehat{T}^{(n)}_{\la}$ for $\la\in\cup_{k=0}^n\mc{P}_k$
under the canonical projection $\mHA\rightarrow\mHA/[\mHA,\mHA]$.
\end{theorem}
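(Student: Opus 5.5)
Spanning is exactly Theorem~\ref{thm:classpoly}. By Theorem~\ref{thm:new-pre-R}, $\mHA$ is spanned over ${\bf A}$ by the $T_{(A,B,w)}$. Each of these is congruent modulo $[\mHA,\mHA]$ to an ${\bf A}$-combination of the elements $\widehat{T}^{(n)}_\la$ with $\la\in\cup_{k=0}^n\mc{P}_k$. Hence their images generate $\mHA/[\mHA,\mHA]$ as an ${\bf A}$-module. What remains is linear independence over ${\bf A}$. Freeness then follows, since an ${\bf A}$-module generated by linearly independent elements is free on them.

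For independence I would pass to $\mathbb{C}(q)$ and use the semisimplicity of $\mH$. Suppose $\sum_\la c_\la \widehat{T}^{(n)}_\la\in[\mHA,\mHA]$ with $c_\la\in{\bf A}$. Apply the irreducible characters $\chi^{(n)}_{(\nu,m)}$ from Lemma~\ref{lem:Si-irred}; these are trace functions on $\mH\supseteq\mHA$, so they vanish on $[\mHA,\mHA]$. This gives $\sum_\la c_\la\chi^{(n)}_{(\nu,m)}(\widehat{T}^{(n)}_\la)=0$ for all $(\nu,m)$. The number of irreducibles equals $\sum_{k=0}^n|\mc{P}_k|$, which is exactly the number of elements $\widehat{T}^{(n)}_\la$. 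Because $\mH$ is split semisimple over $\mathbb{C}(q)$, or at least semisimple with absolutely irreducible modules after a finite extension, $\dim_{\mathbb{C}(q)}\mH/[\mH,\mH]$ equals the number of irreducibles and the characters form a basis of the space of trace functions. The spanning statement also holds over $\mathbb{C}(q)$ by base change. So $\sum_k|\mc{P}_k|$ spanning vectors span a space of exactly that dimension, and they are therefore linearly independent over $\mathbb{C}(q)$. In particular the $c_\la$ all vanish, and the images are ${\bf A}$-linearly independent.

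The key step is the dimension count $\dim\mH/[\mH,\mH]=\#\{\text{irreducibles}\}$. Dually, the square character table $\big(\chi^{(n)}_{(\nu,m)}(\widehat{T}^{(n)}_\la)\big)$ must be invertible. For a semisimple algebra over a field of characteristic zero, $\mathcal{H}/[\mathcal{H},\mathcal{H}]$ is isomorphic to the product of the cocenters of the simple components $\End_{D_i}(V_i)$. Each component contributes $D_i/[D_i,D_i]$, which is one-dimensional when $D_i$ is the ground field. So I need the irreducibles in Lemma~\ref{lem:Si-irred} to be absolutely irreducible. I would check this from the fact that $\dim\mH=\sum_k\binom{n}{k}^2k!$ (Lemma~\ref{lem:old-basis}) equals $\sum_{(\la,k)}(\dim N^{(n)}_{(\la,k)})^2$. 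With Siegel's dimensions $\dim N^{(n)}_{(\la,k)}=\binom{n}{k}f^\la$, this is $\sum_k\binom nk^2\sum_{\la\vdash k}(f^\la)^2$, so every $D_i$ is the ground field. Any leftover subtlety about ${\bf A}$ versus $\mathbb{C}(q)$ needs no extra work, because independence over the field implies independence over the subring ${\bf A}$.
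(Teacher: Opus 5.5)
Your proposal is correct and follows the paper's own proof: spanning comes from Theorem~\ref{thm:classpoly}, you base change to $\mathbb{C}(q)$, semisimplicity makes $\dim\mH/[\mH,\mH]$ equal to the number $\sum_k|\mc{P}_k|$ of irreducibles, so the spanning set is independent over $\mathbb{C}(q)$ and hence over ${\bf A}$. Your extra step, checking that every simple component is split via $\dim\mH=\sum_{(\la,k)}(\dim N^{(n)}_{(\la,k)})^2$ with $\dim N^{(n)}_{(\la,k)}=\binom{n}{k}f^\la$, fills in a point the paper takes for granted; note that this dimension formula is an external fact from Siegel's construction that the paper does not state.
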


\begin{proof}
By Theorem~\ref{thm:classpoly} $\mHA/[\mHA,\mHA]$ is
spanned by the images of $\widehat{T}^{(n)}_{\la}$ with
$\la\in\cup_{k=0}^n\mathcal{P}_k$ under the projection
$\mHA\rightarrow\mHA/[\mHA,\mHA]$. Passing to the fraction field
$\mathbb{C}(q)$ for mirabolic Hecke algebra $\mH=\mathscr{H}_{n,\mathbb{C}(q)}(q)$, the images of the elements
$\widehat{T}^{(n)}_{\la}$ with
$\la\in\cup_{k=0}^n\mathcal{P}_k$ remain to be a spanning set for
$\mH/[\mH,\mH]$. It is known that $\mH$ is
semisimple and its non-isomorphic irreducible characters are
parametrized by the set $\{(\la,k)\mid \la\in\mc{P}_k,0\leq k\leq n\}$ which has the same cardinality as $\cup_{k=0}^n\mathcal{P}_k$ by Lemma \ref{lem:Si-irred}. It follows that the dimension of the
space of trace functions on $\mH$ is
$$
\dim_{\mathbb{C}(q)} \mH/[\mH,\mH] = |\cup_{k=0}^n\mathcal{P}_k|. 
$$
This implies that the images of $\widehat{T}^{(n)}_{\la}$ with
$\la\in\cup_{k=0}^n\mathcal{P}_k$ are
linearly independent in the $\mH/[\mH,\mH] $ as well as in
$\mHA/[\mHA,\mHA]$. This proves the theorem.
\end{proof}

\begin{corollary}\label{cor:spacetrace}
Every trace function $\chi:\HCa\rightarrow {\bf A}$ is uniquely
determined by its values on the elements $\widehat{T}^{(n)}_{\la}$ with
$\la\in\cup_{k=0}^n\mathcal{P}_k$. Moreover, the polynomials $f_{(A,B,w)}^\la$ in
\eqref{eq:commutator} are uniquely determined by $(A,B,w)$ and
$\la$.
\end{corollary}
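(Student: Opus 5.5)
The corollary is a formal consequence of Theorem~\ref{thm:basistrace} combined with the identification of trace functions with linear forms on the cocenter, recalled at the start of this subsection. Write $\pi:\mHA\rightarrow\mHA/[\mHA,\mHA]$ for the canonical projection. The plan has two steps: first the uniqueness of $\chi$, then the uniqueness of the coefficients $f^\la_{(A,B,w)}$.

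For the first assertion, let $\chi:\HCa\rightarrow{\bf A}$ be a trace function. Since $[\mHA,\mHA]\subseteq\ker\chi$, the map $\chi$ factors as $\chi=\bar\chi\circ\pi$ for a unique ${\bf A}$-linear map $\bar\chi:\mHA/[\mHA,\mHA]\rightarrow{\bf A}$. By Theorem~\ref{thm:basistrace}, the elements $\pi(\widehat{T}^{(n)}_\la)$ with $\la\in\cup_{k=0}^n\mc{P}_k$ form an ${\bf A}$-basis of the cocenter. An ${\bf A}$-linear map on a free module is determined by its values on a basis, so $\bar\chi$, and hence $\chi$, is determined by the values $\chi(\widehat{T}^{(n)}_\la)=\bar\chi(\pi(\widehat{T}^{(n)}_\la))$.

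Freeness also gives the converse, which is worth recording. Any assignment of values in ${\bf A}$ to the elements $\widehat{T}^{(n)}_\la$ extends to a linear form on the cocenter, and composing with $\pi$ gives a trace function. The space of trace functions is therefore free of rank $|\cup_{k=0}^n\mc{P}_k|$.

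For the second assertion, fix $(A,B,w)\in\Gamma^{(n)}$. Suppose there are two families $f^\la$ and $f'^\la$ in ${\bf A}$, both satisfying \eqref{eq:commutator}. Subtracting the two congruences gives $\sum_\la (f^\la-f'^\la)\,\pi(\widehat{T}^{(n)}_\la)=0$ in the cocenter. The linear independence part of Theorem~\ref{thm:basistrace} then forces $f^\la=f'^\la$ for all $\la$.

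Equivalently, one can take the dual basis $\{\delta_\la\}$ of trace functions, defined by $\delta_\la(\widehat{T}^{(n)}_\mu)=\delta_{\la\mu}$, which exists by the freeness just noted. Then $f^\la_{(A,B,w)}=\delta_\la(T_{(A,B,w)})$, which depends only on $(A,B,w)$ and $\la$.

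There is no real obstacle here. The only point needing care is that Theorem~\ref{thm:basistrace} gives a basis over ${\bf A}$, and not merely over $\C(q)$. This is exactly what that theorem provides: spanning over ${\bf A}$ comes from Theorem~\ref{thm:classpoly}, and independence over ${\bf A}$ follows from independence after base change to $\C(q)$.
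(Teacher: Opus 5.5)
Your proposal is correct and follows the paper's route. The paper gives no separate proof of this corollary; it treats it as an immediate consequence of Theorem~\ref{thm:basistrace}, using the identification of trace functions with ${\bf A}$-linear forms on $\mHA/[\mHA,\mHA]$, which is exactly your argument (uniqueness of $\chi$ from the basis, uniqueness of the $f^\la_{(A,B,w)}$ from ${\bf A}$-linear independence). Your observation that ${\bf A}$-independence is inherited from $\C(q)$-independence, because $[\mHA,\mHA]$ maps into $[\mH,\mH]$, is precisely the step on which the paper's proof of Theorem~\ref{thm:basistrace} rests.
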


For $(A,B,w)\in\Gamma$ and
$\la\in \cup_{k=0}^n\mathcal{P}_k$. $f^\la_{(A,B,w)}$ are called {\em class
polynomials}, and they are mirabolic Hecke analogues of the class polynomials
introduced by Geck-Pfeiffer \cite[Definition 1.2(2)]{GP1} (cf.
\cite[Section 8.2]{GP2}) for Hecke algebras associated to finite
Weyl groups.
The square matrix
$$
\big[\chi^{(n)}_{\la,k}(\widehat{T}^{(n)}_{\nu}) \big]_{\la\in\mc{P}_k,0\leq k\leq n, \nu\in\cup_{t=0}^n\mc{P}_t}
$$
is called the {\em character table} of the mirabolic Hecke algebra
$\mH$. By Corollary~\ref{cor:spacetrace} and the linear
independence of irreducible characters $\chi^{(n)}_{(\la,k)}$ for
$\la\in\mc{P}_k$ and $0\leq k\leq n$, the square matrix $\big[\chi^{(n)}_{\la,k}(\widehat{T}^{(n)}_{\nu}) \big]_{\la\in\mc{P}_k,0\leq k\leq n, \nu\in\cup_{t=0}^n\mc{P}_t}$ is invertible in $\mathbb{C}(q)$.

\section{Schur-Weyl duality between $U_q(\mathfrak{gl}_r)$ and $\mH$}\label{sec:duality}
In this section, we construct a Schur--Weyl duality between \(U_q(\mathfrak{gl}_r)\) and \(\mH\) over the field \(\mathbb{C}(v)\) with $v^2=q$.  

Let $r\geq 1$. It is known that the quantum group $U_q(\mathfrak{gl}_{r+1})$ with Chevalley generators $\mathsf{K}^{\pm 1}_i, \mathsf{E}_j, \mathsf{F}_j, 1\leq i\leq r+1,1\leq i\leq r$ admits a natural representation $V_{r+1}=\mathbb{C}({v})$-span$\{u_1,u_2,\ldots,u_r,u_{r+1}\}$ satisfying 
$$
\mathsf{K}_i u_a=
\left\{
\begin{array}{cc}
{v} u_a,&\text{ if }a=i, \\
u_a,&\text{ if }a\neq i. 
\end{array}
\right.
\mathsf{E}_i u_a=
\left\{
\begin{array}{cc}
u_{a+1},&\text{ if }a=i, \\
0,&\text{ if }a\neq i. 
\end{array}
\right.
\mathsf{F}_i u_a=
\left\{
\begin{array}{cc}
u_{a-1},&\text{ if }a=i+1, \\
0,&\text{ if }a\neq i+1. 
\end{array}
\right.
$$
and moreover 
\begin{equation}\label{eq:restriction}
\text{res}^{U_q(\mathfrak{gl}_{r+1})}_{U_q(\mathfrak{gl}_r)}V_{r+1}=\mathbb{C}({v})\text{-span}\{u_1,u_2,\ldots,u_r\}\oplus\mathbb{C}({v})\text{-span}\{u_{r+1}\}\cong V_r\oplus V_0, 
\end{equation}
where $V_{r}$ is the natural representation of $U_q(\mathfrak{gl}_r)$ and $V_0$ is the trivial representation satisfying $\mathsf{K}_iu_{r+1}=u_{r+1}, \mathsf{E}_ju_{r+1}=0=\mathsf{F}_ju_{r+1}$ for the Chevalley generators $\mathsf{E}_j, \mathsf{F}_j, 1\leq i\leq r, 1\leq j\leq r-1$ of  $U_q(\mathfrak{gl}_r)$.  
Let $\Phi: U_q(\mathfrak{gl}_r)\rightarrow \End_{\mathbb{C}({v})}(V_{r+1}^{\otimes n})$ the algebra homomorphism which defines the $U_q(\mathfrak{gl}_r)$-module $V_{r+1}^{\otimes n}$.  

Denote by $\breve{\pi}: V_{r+1}\rightarrow V_{r+1}$ be the projection to $V_0$. That is, 
\begin{equation}\label{eq:proj}
\breve{\pi}(u_i)=0, 1\leq i\leq r,\quad \breve{\pi}(u_{r+1})=u_{r+1}. 
\end{equation}
Then by \eqref{eq:restriction} the map $\breve{e}_i:=\underbrace{\breve{\pi}\otimes\breve{\pi}\otimes\cdots\otimes\breve{\pi}}_i\otimes\text{id}\otimes\cdots\otimes\text{id}:V_{r+1}^{\otimes n}\rightarrow V_{r+1}^{\otimes n}$ is actually a $U_q(\mathfrak{gl}_r)$-module homomorphism, thus 
\begin{equation}\label{eq:breve-e}
\breve{e}_i:=\underbrace{\breve{\pi}\otimes\breve{\pi}\otimes\cdots\otimes\breve{\pi}}_i\otimes\text{id}\otimes\cdots\otimes\text{id}\in\End_{U_q(\mathfrak{gl}_r)}(V_{r+1}^{\otimes n}). 
\end{equation}
for $1\leq i\leq n$. Meanwhile define the operator $\breve{R}$ on $V_{r+1}^{\otimes 2}$ via 
\begin{equation}\label{eq:operatorR}
\breve{R}(u_i\otimes u_j)=
\left\{
\begin{array}{ll}
-u_i\otimes u_i,&\text{ if }i=j,\\
-{q}^{\frac{1}{2}}u_j\otimes u_i+(q-1)u_i\otimes u_j,&\text{ if }i>j,\\
-{q}^{\frac{1}{2}}u_j\otimes u_i,&\text{ if }i<j. 
\end{array}
\right. 
\end{equation}
We remark that the operator $\breve{R}$ corresponds to $-q\breve{R}^{-1}_{\rho}$ in \cite[page 466]{Ra} which was originally defined in \cite[Proposition 3, (7)]{Ji}. In addition define 
\begin{equation}\label{eq:operatorRi}
\breve{R}_i=\underbrace{\text{id}\otimes\cdots\otimes\text{id}}_{i-1}\otimes\breve{R}\otimes\underbrace{\text{id}\otimes\cdots\otimes\text{id}}_{n-i-1}\in\End_{\mathbb{C}(q)}(V_{r+1}^{\otimes n})
\end{equation} for $1\leq i\leq n-1$ with $\breve{R}$ acting on the $i$-th and $(i+1)$-th factors. 
It is known \cite{Ji} that 
\begin{equation}\label{eq:R-hom}
\breve{R}_1,\breve{R}_2,\ldots,\breve{R}_{n-1}\in \End_{U_q(\mathfrak{gl}_{r+1})}(V_{r+1}^{\otimes n})
\end{equation}
and moreover they satisfy 
\begin{equation}\label{eq:breveR}
(\breve{R}_i-q)(\breve{R}_i+1)=0,\quad \breve{R}_i\breve{R}_{i+1}\breve{R}_i=\breve{R}_{i+1}\breve{R}_i\breve{R}_{i+1},\quad  \breve{R}_i\breve{R}_j=\breve{R}_j\breve{R}_i
\end{equation}
for $1\leq i\leq n-2, |i-j|>1$
since the map $\psi: \mathcal{H}_n(q)\rightarrow \End_{\mathbb{C}({v})}(V_{r+1}^{\otimes n})$ given by $\psi(T_i)=\breve{R}_i$ for $1\leq i\leq n-1$ defines the $\mathcal{H}_n(q)$-module $V^{\otimes n}_{r+1}$ due to \cite{Ji}. 

\begin{proposition}\label{prop:action-Psi}
The map $\Psi: \mH\rightarrow \End_{\mathbb{C}({v})}(V_{r+1}^{\otimes n})$ defined via 
$$
\Psi(T_i)=\breve{R}_i, \Psi(P_j)=\breve{e}_j,\quad 1\leq i\leq n-1, 1\leq j\leq n
$$
is an algebra homomorphism and moreover $\Psi(\mH)\subseteq \End_{U_q(\mathfrak{gl}_r)}(V_{r+1}^{\otimes n})$. 
\end{proposition}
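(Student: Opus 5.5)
By Theorem~\ref{thm:new-pre-R} (with $R=\mathbb{C}(v)$, which contains $q=v^2$ invertible), $\mH$ is presented by the generators $P_1,\ldots,P_n,T_1,\ldots,T_{n-1}$ subject to \eqref{eq:mH-2}--\eqref{eq:mH-4}, \eqref{eq:Pi} and \eqref{eq:new-pre-1}--\eqref{eq:new-pre-4}. To show $\Psi$ is an algebra homomorphism it therefore suffices to check that the operators $\breve{R}_i$ and $\breve{e}_j$ satisfy exactly these relations.

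The Hecke relations \eqref{eq:mH-2}--\eqref{eq:mH-4} hold for the $\breve{R}_i$ by \eqref{eq:breveR}, which is Jimbo's result. The remaining relations reduce to short computations on basis tensors $u_{a_1}\otimes\cdots\otimes u_{a_n}$.

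\begin{itemize}
\item Relations \eqref{eq:new-pre-1} and \eqref{eq:new-pre-2}: these are immediate, since $\breve{\pi}$ is an idempotent, so $\breve{e}_i\breve{e}_j=\breve{e}_{\max(i,j)}$.
\item Relation \eqref{eq:new-pre-3}: for $i<j$, $\breve{R}_j$ acts on tensor factors $j,j+1$, which are disjoint from factors $1,\ldots,i$, so $\breve{e}_i$ and $\breve{R}_j$ commute.
\item Relation \eqref{eq:new-pre-4}: for $j<i$, the operator $\breve{e}_i$ maps into vectors whose factors $j$ and $j+1$ both equal $u_{r+1}$, and $\breve{R}(u_{r+1}\otimes u_{r+1})=-u_{r+1}\otimes u_{r+1}$. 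This gives $\breve{R}_j\breve{e}_i=-\breve{e}_i$. For $\breve{e}_i\breve{R}_j=-\breve{e}_i$, note that $\breve{R}$ preserves the multiset of indices and the only tensor of the form $u_a\otimes u_b$ whose image has a nonzero $u_{r+1}\otimes u_{r+1}$ component is $u_{r+1}\otimes u_{r+1}$ itself. Hence $(\breve{\pi}\otimes\breve{\pi})\breve{R}=-(\breve{\pi}\otimes\breve{\pi})$.
\item Relation \eqref{eq:Pi}, i.e. $\breve{e}_i=-q^{-1}\bigl(\breve{e}_{i-1}\breve{R}_{i-1}\breve{e}_{i-1}-(q-1)\breve{e}_{i-1}\bigr)$: this is where a genuine computation is needed. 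The first $i-2$ factors are killed or fixed identically on both sides, so it reduces to an identity on the two factors $i-1,i$:
\[
(\breve{\pi}\otimes \mathrm{id})\,\breve{R}\,(\breve{\pi}\otimes\mathrm{id})=(q-1)(\breve{\pi}\otimes\mathrm{id})-q(\breve{\pi}\otimes\breve{\pi}).
\]
I would verify this on $u_{r+1}\otimes u_b$. If $b=r+1$, the left side is $-u_{r+1}\otimes u_{r+1}$ and the right side is $(q-1-q)u_{r+1}\otimes u_{r+1}$, so they agree. If $b\le r$, then $r+1>b$, so $\breve{R}(u_{r+1}\otimes u_b)=-v\,u_b\otimes u_{r+1}+(q-1)u_{r+1}\otimes u_b$. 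Applying $\breve{\pi}\otimes\mathrm{id}$ kills the first term and leaves $(q-1)u_{r+1}\otimes u_b$, which matches the right side. If the first factor is $u_a$ with $a\le r$, both sides vanish. Only the case $i>j$ in \eqref{eq:operatorR} contributes here, so the sign and placement of the $(q-1)$ term there are exactly what make the identity hold. This is the step I expect to be the main obstacle, since it is the one place where the particular normalization of $\breve{R}$ matters.
\end{itemize}

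Finally, for $\Psi(\mH)\subseteq \End_{U_q(\mathfrak{gl}_r)}(V_{r+1}^{\otimes n})$, it suffices to check the generators, since the centralizer is a subalgebra. Each $\breve{R}_i$ commutes with $U_q(\mathfrak{gl}_{r+1})$ by \eqref{eq:R-hom}, hence with the subalgebra $U_q(\mathfrak{gl}_r)$. Each $\breve{e}_j$ lies in the centralizer by \eqref{eq:breve-e}. The underlying reason is that $\breve{\pi}$ is the projection onto the summand $V_0$ in the $U_q(\mathfrak{gl}_r)$-decomposition \eqref{eq:restriction}, and the coproduct on $U_q(\mathfrak{gl}_r)$ acts factorwise through group-likes and generators that preserve that decomposition.
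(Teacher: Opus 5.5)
Your proposal is correct and follows essentially the same route as the paper. You invoke the presentation of Theorem~\ref{thm:new-pre-R}, take the Hecke relations from \eqref{eq:breveR}, verify \eqref{eq:Pi} and \eqref{eq:new-pre-1}--\eqref{eq:new-pre-4} on basis tensors, and deduce the centralizer statement from \eqref{eq:breve-e} and \eqref{eq:R-hom}. Two differences are only packaging: your two-factor identity $(\breve{\pi}\otimes\mathrm{id})\breve{R}(\breve{\pi}\otimes\mathrm{id})=(q-1)(\breve{\pi}\otimes\mathrm{id})-q(\breve{\pi}\otimes\breve{\pi})$ repackages the paper's case check for \eqref{eq:Pi}, and you verify \eqref{eq:new-pre-2} directly rather than deducing it from \eqref{eq:new-pre-1} and \eqref{eq:Pi}.
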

\begin{proof}
Clearly the second assertion follows from \eqref{eq:breve-e} and \eqref{eq:R-hom}. Then by Theorem \ref{thm:new-pre-R} and \eqref{eq:breveR} it suffices to show the operators $\breve{R}_1,\ldots,\breve{R}_{n-1}, \breve{e}_1,\ldots,\breve{e}_n$ satisfy the relations \eqref{eq:Pi} and \eqref{eq:new-pre-1}-\eqref{eq:new-pre-4}  with $T_i,P_j$ being replaced by $\breve{R}_i,\breve{e}_j$, respectively. It is obvious that $\breve{e}_j^2=\breve{e}_j$ for $1\leq j\leq n$ by \eqref{eq:proj} and \eqref{eq:breve-e}. Hence \eqref{eq:new-pre-1} holds.

Fix $1\leq i<j\leq n$. For each $u=u_{k_1}\otimes u_{k_2}\otimes\cdots\otimes u_{k_n}\in V_{r+1}^{\otimes n}$ with $1\leq k_1,\ldots,k_m\leq r+1$, by \eqref{eq:proj}, \eqref{eq:breve-e}, \eqref{eq:operatorR} and \eqref{eq:operatorRi} a direct calculation gives rise to 
$\breve{R}_i\breve{e}_j(u)=\breve{e}_j\breve{R}_i(u)=-u=-\breve{e}_j(u)$ if $k_1=k_2=\cdots=k_j=r+1$ and $\breve{R}_i\breve{e}_j(u)=\breve{e}_j\breve{R}_i(u)=0=-\breve{e}_j(u)$ otherwise. Hence \eqref{eq:new-pre-4} holds. Similarly one can show \eqref{eq:new-pre-3} holds. 

Finally, it remains to verify \eqref{eq:Pi}  as \eqref{eq:new-pre-2} follows from \eqref{eq:new-pre-1} and \eqref{eq:Pi}. Fix $1\leq i\leq n-1$. For each $u=u_{k_1}\otimes u_{k_2}\otimes\cdots\otimes u_{k_n}\in V_{r+1}^{\otimes n}$ with $1\leq k_1,\ldots,k_m\leq r+1$ a direct calculation by \eqref{eq:operatorR} and \eqref{eq:breve-e} gives rise to 
$-q^{-1}(\breve{e}_i\breve{R}_i\breve{e}_i-(q-1)\breve{e}_i)(u)=0=\breve{e}_{i+1}(u)$ if one of $k_1,\ldots,k_i$ is not equal to $r+1$. Now assume $k_1=k_2=\cdots=k_i=r+1$, then 
\begin{align}
-q^{-1}(\breve{e}_i\breve{R}_i\breve{e}_i-(q-1)\breve{e}_i)(u)
=\left\{
\begin{array}{ll}
u,&\text{ if }k_{i+1}=r+1,\\
0,&\text{ otherwise}
\end{array}
\right.
\end{align}
again by  by \eqref{eq:operatorR} and \eqref{eq:breve-e}. 
Clearly $\breve{e}_{i+1}(u)=u$ if $k_1=k_2=\cdots=k_i=k_{i+1}=r+1$ and $\breve{e}_{i+1}(u)=0$ otherwise. Hence putting together we obtain that \eqref{eq:Pi}  holds. This proves the proposition. 
\end{proof}

\begin{theorem}\label{thm:dulaity}
The algebra homomorphisms $\Psi: \mH\rightarrow \End_{\mathbb{C}({v})}(V_{r+1}^{\otimes n})$ and $\Phi: U_q(\mathfrak{gl}_r)\rightarrow \End_{\mathbb{C}({v})}(V_{r+1}^{\otimes n})$ satisfy the double centralizer property: 
\begin{equation}
\Psi(\mH)= \End_{U_q(\mathfrak{gl}_r)}(V_{r+1}^{\otimes n}), \quad \Phi(U_q(\mathfrak{gl}_r))=\End_{\mH}(V_{r+1}^{\otimes n}). 
\end{equation}
Hence we have the decomposition 
\begin{equation}\label{eq:decompose}
V_{r+1}^{\otimes n}\cong\oplus_{k=0}^n\oplus_{\la\in\mc{P}_k,  \ell(\la)\leq r} L(\la)\otimes N^{(n)}_{(\la,k)}. 
\end{equation}
as $(U_q(\mathfrak{gl}_r),\mH)$-bimodules, where $L(\la)$ is the irreducible $U_q(\mathfrak{gl}_r)$-module of highest weight $\la$. 
\end{theorem}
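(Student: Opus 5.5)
Since $U_q(\mathfrak{gl}_r)$ acts semisimply on $V_{r+1}^{\otimes n}$ (it is a finite-dimensional type~1 module over $\mathbb{C}(v)$) and $\mH$ is semisimple, the double centralizer theorem reduces the statement to the single equality $\Psi(\mH)=\End_{U_q(\mathfrak{gl}_r)}(V_{r+1}^{\otimes n})$. Once this holds, the second equality $\Phi(U_q(\mathfrak{gl}_r))=\End_{\mH}(V_{r+1}^{\otimes n})$ and a multiplicity-free bimodule decomposition $V_{r+1}^{\otimes n}\cong\bigoplus L(\la)\otimes N_\la$ follow from the general theory. We already have the inclusion $\subseteq$ by Proposition~\ref{prop:action-Psi}. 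It therefore suffices to show that the image of $\Psi$ has dimension equal to $\dim\End_{U_q(\mathfrak{gl}_r)}(V_{r+1}^{\otimes n})$, or, more concretely, that the two sides have the same isotypic structure.

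To compute the centralizer, I use \eqref{eq:restriction}. As a $U_q(\mathfrak{gl}_r)$-module,
\[
V_{r+1}^{\otimes n}\cong (V_r\oplus V_0)^{\otimes n}\cong\bigoplus_{k=0}^n \binom{n}{k} V_r^{\otimes k},
\]
where the summand for a given subset $S\subseteq\{1,\dots,n\}$ with $|S|=k$ is spanned by the basis tensors having $u_{r+1}$ exactly in the positions outside $S$. Jimbo's duality then gives
\[
V_{r+1}^{\otimes n}\cong\bigoplus_{k}\bigoplus_{\la\in\mc{P}_k,\ \ell(\la)\le r} L(\la)^{\oplus \binom{n}{k} f^\la},
\]
where $f^\la=\dim S^\la$. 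Hence
\[
\dim\End_{U_q(\mathfrak{gl}_r)}(V_{r+1}^{\otimes n})=\sum_{\la,\ \ell(\la)\le r}\Big(\binom{n}{|\la|}f^\la\Big)^2 .
\]
It remains to identify $\binom{n}{k}f^\la$ with $\dim N^{(n)}_{(\la,k)}$ and to show that $\Psi$ realizes each of these multiplicity spaces irreducibly.

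For the second step I plan to use the see-saw \eqref{eq:see-saw}. The key claim is that the $\Psi(\mH)$-module $M_\la:=\Hom_{U_q(\mathfrak{gl}_r)}(L(\la),V_{r+1}^{\otimes n})$ is irreducible. I would prove this by filtering $M_\la$ by the number $n-k$ of $u_{r+1}$-factors, using the idempotents $\breve e_j$ together with their conjugates under $\breve R$. The top piece, corresponding to $u_{r+1}^{\otimes(n-k)}\otimes V_r^{\otimes k}$, is cut out by $\breve e_{n-k}$, and on this piece the $T_{n-k+1},\dots,T_{n-1}$ act by Jimbo's $\mathcal H_k(q)$-action, so it is the irreducible Specht module $S^\la$ (the mirabolic analogue of Lemma~\ref{lem:rho-A}, using $P_{n-k}\mH P_{n-k}$). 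The elements $T_A$ and $T_B^{-1}$ of \eqref{eq:TABw} then move the $u_{r+1}$-positions around, so $T_AP_{n-k}(\cdot)$ generates all $\binom nk$ position-summands from this block. Irreducibility follows because every nonzero submodule, after multiplication by suitable $T_B^{-1}$ and then $P_{n-k}$, meets the irreducible block $S^\la$. Consequently the $M_\la$ are pairwise non-isomorphic irreducibles: they are distinguished by $(\la,k)$, for instance via $\chi(P_{n-k})$ and the $\mathcal H_k$-character. By the double centralizer theorem, $\Psi(\mH)$ is then the full centralizer.

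Finally, I identify $M_\la\cong N^{(n)}_{(\la,k)}$ using Lemma~\ref{lem:Si-irred}. The labelling has to match Siegel's, and I would match it by the restriction to $P_{n-k}\mH P_{n-k}\supseteq\mathcal H_k$, or via Rosso's description \cite[Proposition~4.15]{Ro1}. At the same time one gets $\dim N^{(n)}_{(\la,k)}=\binom nk f^\la$, which is consistent with $\dim\mH=\sum_k\binom nk^2k!$ when $r\ge n$. I expect the main obstacle to be the irreducibility and generation argument in the previous step, and in particular the precise check that the operators $\breve R_i$ mixing $u_{r+1}$ with $u_a$ (the $(q-1)$ term) act compatibly with the filtration. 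I would handle this by working with the associated graded module and comparing dimensions.
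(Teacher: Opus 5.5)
Your argument is correct in outline, but it follows a genuinely different route from the paper's.

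The paper never examines the multiplicity spaces. It specializes at $q=1$, where $\breve R_i$ becomes minus the flip and, together with the $\breve e_j$, generates the classical centralizer $\End_{GL_r(\mathbb{C})}(V_{r+1}^{\otimes n})$ (rook-monoid duality). The spanning set $\Psi(T_{(A,B,w)})$ is integral and specializes to a spanning set of that algebra, so rank semicontinuity gives $\dim\Psi(\mH)\ge\dim\End_{GL_r(\mathbb{C})}(V_{r+1}^{\otimes n})=\dim\End_{U_q(\mathfrak{gl}_r)}(V_{r+1}^{\otimes n})$. Proposition~\ref{prop:action-Psi} supplies the reverse inclusion. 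This is short, but it imports the classical duality and says nothing about which $\mH$-module is paired with $L(\la)$. As a result, the labelling by $N^{(n)}_{(\la,k)}$ in \eqref{eq:decompose} is simply asserted.

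Your route uses only Jimbo's duality and describes the multiplicity spaces concretely: $\dim M_\la=\binom nk f^\la$, $P_jM_\la\neq0$ iff $j\le n-|\la|$, and $P_{n-k}M_\la\cong S^\la$ over $\mathcal H_k$. This is exactly the data needed to match Siegel's labels, and it also shows that $\Psi$ is faithful when $r\ge n$.

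To make it airtight, you need the following points.

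(i) The filtration that does the work is by the \emph{positions} of the $u_{r+1}$'s, not their number, which is constantly $n-k$ on $M_\la$. Write $M_\la=\bigoplus_C M_\la^C$ according to the set $C$ of $u_{r+1}$-positions, and let $\Sigma(C)$ be the sum of those positions. Each $\breve R_i^{\pm1}$ changes $\Sigma$ by at most one. Hence $T_A$ sends the block $M_\la^{\{1,\dots,n-k\}}$ isomorphically onto $M_\la^A$, via the scalar $(-v)^{\Sigma(A)-\Sigma(\{1,\dots,n-k\})}$ times letter-preserving transport, modulo components of smaller $\Sigma$. Induction on $\Sigma$ then shows the translates span $M_\la$.

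(ii) Semisimplicity of $\Psi(\mH)$ together with irreducibility of the block then forces $M_\la$ to be irreducible. Your ``meets the block'' step also holds directly: apply $P_{n-k}T_B^{-1}$ with $B$ of minimal $\Sigma$ in the support.

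(iii) For density over the non-closed field $\mathbb{C}(v)$, you need $\End_{\mH}(M_\la)=\mathbb{C}(v)$. This holds because an endomorphism preserves the block, acts there by a scalar, and the block generates $M_\la$.

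(iv) To separate different $k$, use the vanishing pattern of the $P_j$. A single $\chi(P_{n-k})$ together with the $\mathcal H_k$-character does not distinguish, for example, $M_{(2)}$ from $M_{\varnothing}$.
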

\begin{proof}
We denote by $\mathscr{C}_n(q)$ the endomorphism algebra 
\begin{equation}\label{eq:centralizer C}
\mathscr{C}_n(q):=\End_{U_q(\mathfrak{gl}_r)}(V_{r+1}^{\otimes n})\cong\End_{\Phi(U_q(\mathfrak{gl}_r))}(V_{r+1}^{\otimes n}). 
\end{equation}
Since $V_{r+1}^{\otimes n}$ is semisimple as a $\Phi(U_q(\mathfrak{gl}_r))$-module, by the general results from double centralizer property (cf. \cite[Section 4.2]{Ha}, see \cite{CR}) we know that $\mathscr{C}_n$ and $\Phi(U_{q}(\mathfrak{gl}_r))$ satisfying the double centralizer property, hence 
$$ 
\End_{\mathscr{C}_n(q)}(V_{r+1}^{\otimes n})=\Phi(U_q(\mathfrak{gl}_r)).
$$ 
We shall apply a strategy similar to  the proof of \cite[Corollary 4.2]{Ha}.  $\Psi(\mH)$ is generated by $\breve{R}_1,\ldots,\breve{R}_{n-1}$ and $\breve{e}_1,\ldots, \breve{e}_n$. Under the specialization $q=1$, these generators specialize exactly to the generators of the endomorphism $\End_{GL(r,\mathbb{C})}(V_{r+1}^{\otimes n})$ and moreover 
$\dim \End_{GL_r(\mathbb{C})}(V_{r+1}^{\otimes n})=\dim\mathscr{C}_n(q)$. Clearly $\{\Psi(T_{(A,B,w)})\mid (A,B,w)\in\Gamma\}$ spans $\Phi(\mH)$ and  $\Psi(T_{(A,B,w)})|_{q=1}$ is well-defined under the specialization $q=1$. This means $\dim \Psi(\mH)\geq\dim \End_{GL(r,\mathbb{C})}(V_{r+1}^{\otimes n}) =\dim\mathscr{C}_n(q)$. Meanwhile  Proposition \ref{prop:action-Psi} and \eqref{eq:centralizer C}  lead to $\Psi(\mH)\subseteq\mathscr{C}_n(q)$. Putting together the theorem is verified. 
\end{proof}
\begin{remark}
Obviously our duality can  fit into the following commutative diagram: 
\begin{equation*}
\begin{array}{ccccc}
&U_q(\mathfrak{gl}_{r+1})\xrightarrow{\hspace{.5cm} \phi \hspace{.5cm}} &\End_{\mathbb{C}({v})}(V_{r+1}^{\otimes n} )&\xleftarrow{\hspace{.5cm} \psi \hspace{.5cm}} &\mathcal{H}_n(q)\\\
&\rotatebox[origin=c]{90}{$\subset$} \qquad \qquad &\rotatebox[origin=c]{90}{$=$}  & &\rotatebox[origin=c]{-90}{$\subset$}\quad \\
 &U_q(\mathfrak{gl}_r)\xrightarrow{\hspace{.5cm} \Phi \hspace{.5cm}} &\End_{\mathbb{C}({v})}(V_{r+1}^{\otimes n} )&\xleftarrow{\hspace{.5cm} \Psi \hspace{.5cm}} &\mH\\
\end{array}
\end{equation*}
Here the first row is the duality in \cite{Ji}.  
Meanwhile it is known that  a Schur-Weyl type duality between $\mH$ and the mirabolic quantum group introduced in \cite{Ro2} (see also \cite{GR})  is established in \cite{FZM} using a geometric method building on the geometric realization of mirabolic Hecke algebra in \cite{Ro1} (see \cite{Ro2} for an algebraic realization of a duality between mirabolic quantum $\mathfrak{sl}_2$ and $\mathscr{H}_2(q)$).  It is interesting to compare the two dualities and one can also ask whether the Schur-Weyl duality in Theorem \ref{thm:dulaity} admits a geometric realization. We define the endomorphism algebra $\mathcal{MS}_q(r,n)=\End_{\mH}(V_{r+1}^{\otimes n})$ to be the {\em mirabolic $q$-Schur algebra} as a natural analog of the classical $q$-Schur algebras $S_q(r,n)$. Clearly $\mathcal{MS}_q(r,n)$ is a subalgebra of classical $q$-Schur algebra $S_q(r+1,n)$. It is natural to ask the connection between our notion of mirabolic $q$-Schur algebra $\mathcal{MS}_q(r,n)$ and the one introduced in \cite{FZM}. 
\end{remark}
\section{Frobenius character formulas for $\mH$}\label{sec:Frobenius}
In this section, we shall apply the duality in Theorem \ref{thm:dulaity} to establish a Frobenius character formula for $\mH$. {\bf In the remainder of the paper, we assume $r>n$}.  

Let $\mathfrak{h}$ be the Cartan subalgebra of the Lie algebra $\mathfrak{gl}_r$ and let $\epsilon_1,\ldots,\epsilon_r$ be the basis of $\mathfrak{h}^*$ due to the standard basis of $\mathfrak{h}$ and set 
$(\epsilon_i,\epsilon_j)=\delta_{ij}$. Let $\mathsf{P}=\sum_{i}\mathbb{Z}\epsilon$ be the weight lattice of $\mathfrak{gl}_r$. Similar to \cite[(3.1)]{Ra},  for formal variables $x_1,x_2,\ldots,x_r$, we introduce the operator $D$ on $U_q(\mathfrak{gl}_r)$-module $V_{r+1}^{\otimes n}$ via 
\begin{equation}\label{eq:operator-D}
D.u_{k_1}\otimes u_{k_2}\otimes\cdots\otimes u_{k_n}=x_{k_1}x_{k_2}\cdots x_{k_n}u_{k_1}\otimes u_{k_2}\otimes\cdots\otimes u_{k_n}, 
\end{equation}
 for any $u=u_{k_1}\otimes u_{k_2}\otimes\cdots\otimes u_{k_n}\in U$ with $k_1,k_2,\ldots,k_n\in\{1,\ldots,r, r+1\}$, where we set $x_{r+1}=1$. Equivalently, for $\mu=\mu_1\epsilon_1+\cdots+\mu_r\epsilon_r\in\mathsf{P}$,  the weight subspace
$(V_{r+1}^{\otimes n})_{\mu}$ of $V_{r+1}^{\otimes n}$ has a basis given by
$\{u_{k_1}\otimes\cdots\otimes u_{k_n}\mid k_1,\ldots,
k_n\in \{1,\ldots,r, r+1\}, \sharp\{j||k_j|=a\}=\mu_a$ for $1\leq a\leq r\}$. The
operator $D$ satisfies 
$$
 D=x_1^{\mu_1}x_2^{\mu_2}\cdots x_r^{\mu_r}~\text{\bf id} \text{ acting on } (V_{r+1}^{\otimes n})_\mu. 
 $$
On the other hand,  by Proposition \ref{prop:action-Psi} the weight space $(V_{r+1}^{\otimes n})_\mu$ is stable under the action of $\mH$ and moreover by \eqref{eq:decompose}  it can be decompose as the direct sum of irreducible modules $N^{(n)}_{\la,k}$ as follows
$$
(V_{r+1}^{\otimes
n})_{\mu}\cong \oplus_{k=0}^n\oplus_{\la\in\mathcal{P}_k} L(\la)_\mu\otimes N^{(n)}_{(\la,k)}
$$
under the assumption $r>n$. 
This implies that the trace of $Dh$ on $(V_{r+1}^{\otimes n})_{\mu}$ can
be written as
$$
{\rm tr} Dh|_{(V_{r+1}^{\otimes n})_{\mu}}=\sum_{k=0}^n\sum_{\la\in\mc{P}_k}\dim(L(\la)_\mu)x_1^{\mu_1}\cdots
x_r^{\mu_r}\chi^{(n)}_{(\la,k)}(h)
$$
This means as operators on $V_{r+1}^{\otimes n}$: 
 \begin{equation}\label{eq:trace-duality}
 \begin{aligned}
{\rm tr} (Dh)=&\sum_{\mu\in\mathsf{P}}\sum_{k=0}^n\sum_{\la\in\mc{P}_k}\dim(L(\la)_\mu)x_1^{\mu_1}\cdots
x_r^{\mu_r}\chi^{(n)}_{(\la,k)}(h)\\
=&\sum_{k=0}^n\sum_{\la\in\mc{P}_k}{\rm ch}(L(\la))\chi^{(n)}_{(\la,k)}(h)=\sum_{k=0}^n\sum_{\la\in\mc{P}_k}s_\la(x_1,\ldots,x_r)\chi^{(n)}_{(\la,k)}(h). 
\end{aligned}
\end{equation}
due to the known result ${\rm ch}(L(\la)):=\sum_{\mu\in\mathsf{P}}\dim(L(\la)_\mu) x_1^{\mu_1}\cdots
x_r^{\mu_r}=s_\la(x_1,\ldots,x_r)$ (cf \cite[Proposition 10.1.5]{CP}), where $s_\la(x_1,\ldots,x_r)$ is the Schur function in $x_1,\ldots,x_r$ corresponding to the partition $\la$. 

 In the following, we shall give another way to compute the trace ${\rm tr}(Dh)$ for $h=\widehat{T}^{(n)}_\la$ with $\la\in\mathcal{P}_k, 0\leq k\leq n$ by applying the approach in \cite{Ra} (see also \cite{WW} for the case Hecke-Clifford algebras). 

\begin{lemma}\label{lem:trace-wn}
For $1\leq m\leq n$, the trace of the operator $DT_{w_{(m)}}$ on $V_{r+1}^{\otimes m}$ is given by 
$$
{\rm tr}(DT_{w_{(m)}})=\sum_{\underline{k}}(-1)^{\mathsf{e}(\underline{k})}(q-1)^{\mathsf{g}(\underline{k})} x_{k_1}x_{k_2}\cdots x_{k_m}, 
$$
where the summation is over $\underline{k}=(k_1,k_2,\ldots,k_m)$ such $r+1\geq k_1\geq k_2\geq k_m\geq 1$ and $\mathsf{e}(\underline{k})=\sharp\{a\mid k_a=k_{a+1}\},\mathsf{g}(\underline{k})=\sharp\{a\mid k_a>k_{a+1}\}$ and we recall the convention $x_{r+1}=1$. 
\end{lemma}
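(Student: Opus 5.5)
The plan is to compute the trace directly in the basis $\{u_{k_1}\otimes\cdots\otimes u_{k_m}\}$ of $V_{r+1}^{\otimes m}$, following \cite{Ra}. Since $D$ is diagonal in this basis with eigenvalue $x_{k_1}\cdots x_{k_m}$, we have
$$
{\rm tr}(DT_{w_{(m)}})=\sum_{\underline{k}}x_{k_1}\cdots x_{k_m}\,c_{\underline{k}},
$$
where the sum runs over all $\underline{k}\in\{1,\ldots,r+1\}^m$ and $c_{\underline{k}}$ is the coefficient of $u_{\underline{k}}=u_{k_1}\otimes\cdots\otimes u_{k_m}$ in $\breve{R}_1\breve{R}_2\cdots\breve{R}_{m-1}(u_{\underline{k}})$. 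By \eqref{eq:Twga}, $T_{w_{(m)}}=T_1T_2\cdots T_{m-1}$, and it acts through $\Psi$ as this product of $\breve{R}_i$'s. It therefore suffices to prove the following claim: $c_{\underline{k}}=0$ unless $k_1\geq k_2\geq\cdots\geq k_m$, and in that case
$$
c_{\underline{k}}=(-1)^{\mathsf{e}(\underline{k})}(q-1)^{\mathsf{g}(\underline{k})}.
$$

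I will prove the claim by induction on $m$. For $m=1$ the operator is the identity, so $c_{(k_1)}=1$, which matches the empty product. For $m\geq 2$, write $\breve{R}_1\cdots\breve{R}_{m-1}=(\breve{R}_1\cdots\breve{R}_{m-2})\breve{R}_{m-1}$ and apply $\breve{R}_{m-1}$ first, reading off \eqref{eq:operatorR} on the last two factors $u_{k_{m-1}}\otimes u_{k_m}$. Its output has a ``keep'' part, namely $-u_{\underline{k}}$ if $k_{m-1}=k_m$, $(q-1)u_{\underline{k}}$ if $k_{m-1}>k_m$, and nothing if $k_{m-1}<k_m$. When $k_{m-1}\neq k_m$ it also has a ``swap'' part $-q^{1/2}\,u_{k_1}\otimes\cdots\otimes u_{k_m}\otimes u_{k_{m-1}}$.

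The key observation is that the remaining operators $\breve{R}_1,\ldots,\breve{R}_{m-2}$ act only on the first $m-1$ tensor factors, so they never change the $m$-th factor. The swap term has $m$-th factor $u_{k_{m-1}}$ with $k_{m-1}\neq k_m$, so nothing it produces can contribute to the coefficient of $u_{\underline{k}}$. Only the keep term survives. It is zero unless $k_{m-1}\geq k_m$, and in that case it contributes the scalar $-1$ (if $k_{m-1}=k_m$) or $q-1$ (if $k_{m-1}>k_m$), times the coefficient of $u_{k_1}\otimes\cdots\otimes u_{k_{m-1}}$ in $\breve{R}_1\cdots\breve{R}_{m-2}(u_{k_1}\otimes\cdots\otimes u_{k_{m-1}})$ computed on $V_{r+1}^{\otimes(m-1)}$.

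By the induction hypothesis, that last coefficient vanishes unless $k_1\geq\cdots\geq k_{m-1}$, and otherwise equals $(-1)^{\mathsf{e}}(q-1)^{\mathsf{g}}$ counted over the adjacent pairs $(a,a+1)$ with $a\leq m-2$. Multiplying by the factor from the pair $(k_{m-1},k_m)$ gives exactly the claimed exponents $\mathsf{e}(\underline{k})$ and $\mathsf{g}(\underline{k})$, which completes the induction. Summing $c_{\underline{k}}\,x_{k_1}\cdots x_{k_m}$ over weakly decreasing sequences with entries in $\{1,\ldots,r+1\}$, using the convention $x_{r+1}=1$ from \eqref{eq:operator-D}, yields the formula in the lemma.

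The step needing the most care is the bookkeeping of operator order. One must confirm that $\Psi(T_{w_{(m)}})=\breve{R}_1\cdots\breve{R}_{m-1}$ acts with $\breve{R}_{m-1}$ applied first, and that none of the later factors touch position $m$. The argument rests on this to discard the swap terms, including the $-q^{1/2}$ coefficients, without tracking them. With the opposite convention one would instead peel off $\breve{R}_1$ using a symmetric argument on the first factor, giving the same product of local coefficients.
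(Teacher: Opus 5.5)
Your proposal is correct and is essentially the paper's own argument. The paper also writes $T_{w_{(m)}}=T_{w_{(m-1)}}T_{m-1}$ and applies $\breve{R}$ to the last two tensor factors first. It discards the swapped term because $T_{w_{(m-1)}}$ never touches the $m$-th factor, and then inducts on $m$ using the diagonal coefficients $-1$, $q-1$ and $0$.
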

\begin{proof}
It suffices to show that, for $u=u_{k_1}\otimes u_{k_2}\otimes\cdots\otimes u_{k_m}$,
\begin{align*}
(DT_{w_{(m)}} u)|_u
=\left\{
\begin{array}{ll}
(-1)^{\mathsf{e}(\underline{k})}(q-1)^{\mathsf{g}(\underline{k})}x_{k_1}\cdots x_{k_m},
&\text{ if } r+1\geq k_1\geq k_2\geq\cdots\geq k_m\geq 1,\\
0, &\text{ otherwise}.
\end{array}
\right.
\end{align*}
By definition,  we have $T_{w_{(m)}} =T_{w_{(m-1)}} T_{m-1}$.
It follows by Proposition \ref{prop:action-Psi} and \eqref{eq:operator-D} that
\begin{align*}
&(DT_{w_{(m)}} u)|_u\\
=&x_{k_1}\cdots x_{k_m}\cdot (T_{w_{(m)}} u)|_u\\
=&x_{k_1}\cdots x_{k_m}\cdot T_{w_{(m-1)}}
\big(u_{k_1}\otimes\cdots\otimes u_{k_{m-2}}\otimes
 \breve{R}  (u_{k_{m-1}}\otimes u_{k_m}) \big)|_u\\
=&\left\{
\begin{array}{llll}
- x_{k_1}\cdots x_{k_m}  T_{w_{(m-1)}}(u_{k_1}\otimes\cdots\otimes
u_{k_{m-1}})|_{u_{k_1}\otimes\cdots\otimes u_{k_{m-1}}},
&\text{ if }k_{m-1}=k_m\geq 1,\\
(q-1) x_{k_1}\cdots x_{k_m}
 T_{w_{(n-1)}}(u_{k_1}\otimes\cdots\otimes
 u_{k_{n-1}})|_{u_{k_1}\otimes\cdots\otimes u_{k_{m-1}}},
&\text{ if }k_{m-1}>k_m,\\
0,&\text{ otherwise}.
\end{array}
\right.
\end{align*}
Now the lemma follows by induction on $m$.
\end{proof} 
By Lemma \ref{lem:trace-wn} the following is straightforward. 
\begin{proposition}\label{prop:trace-wn}
For $1\leq m\leq n$, the trace of the operator $DT_{w_{(m)}}$ on $V_{r+1}^{\otimes m}$ satisfies 
$$
{\rm tr}(DT_{w_{(m)}})=\sum_{\mu\in\mathcal{P}_m}(-1)^{m-\ell(\mu)}(q-1)^{\ell(\mu)-1}m_\mu(x_1,\ldots,x_r,x_{r+1}). 
$$
\end{proposition}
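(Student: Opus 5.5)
We start from Lemma~\ref{lem:trace-wn}, which writes ${\rm tr}(DT_{w_{(m)}})$ as a sum over weakly decreasing sequences $\underline{k}=(k_1\geq k_2\geq\cdots\geq k_m)$ with entries in $\{1,\ldots,r+1\}$. The plan is to regroup this sum according to the multiset of values taken by $\underline{k}$, i.e.\ according to the underlying monomial.

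First, a weakly decreasing sequence is determined by its multiset of entries. Let $j_1>j_2>\cdots>j_\ell$ be the distinct values occurring, with multiplicities $\mu_1,\ldots,\mu_\ell$ (a composition of $m$ with $\ell$ parts).

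Second, we compute the two statistics in Lemma~\ref{lem:trace-wn} in terms of these data. Among the $m-1$ adjacent pairs $(k_a,k_{a+1})$, exactly $\ell-1$ are strict descents, namely the boundaries between consecutive blocks. The remaining $m-\ell$ pairs are equalities. Hence
\[
\mathsf{g}(\underline{k})=\ell-1,\qquad \mathsf{e}(\underline{k})=m-\ell .
\]
Both depend only on the number $\ell$ of distinct values, and not on the values themselves or their order.

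Third, we sum the monomials. The monomial of $\underline{k}$ is $x_{j_1}^{\mu_1}\cdots x_{j_\ell}^{\mu_\ell}$ in the variables $x_1,\ldots,x_{r+1}$. As $\underline{k}$ ranges over all weakly decreasing sequences whose multiplicity multiset rearranges to a fixed partition $\mu\in\mathcal{P}_m$, these monomials range exactly once over the distinct monomials $x^\alpha$ with $\alpha$ a rearrangement of $\mu$ (padded with zeros to length $r+1$). This is because a monomial of degree $m$ determines its weakly decreasing index sequence uniquely. By definition, the sum of these monomials is $m_\mu(x_1,\ldots,x_r,x_{r+1})$. Since $\ell=\ell(\mu)$, collecting terms gives
\[
{\rm tr}(DT_{w_{(m)}})=\sum_{\mu\in\mathcal{P}_m}(-1)^{m-\ell(\mu)}(q-1)^{\ell(\mu)-1}m_\mu(x_1,\ldots,x_r,x_{r+1}).
\]
Here $r+1>m$ guarantees that every $\mu\in\mathcal{P}_m$ actually occurs, although the identity holds formally regardless. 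Finally, we substitute the convention $x_{r+1}=1$.

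There is no real obstacle. The only point needing care is the bijection between weakly decreasing sequences and monomials: we must check that no monomial is counted twice and that the statistics $\mathsf{e},\mathsf{g}$ are constant on each fibre. Both facts follow from the block decomposition in the second step.
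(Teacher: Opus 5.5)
Your proposal is correct and follows essentially the same route as the paper. Both arguments regroup the sum from Lemma~\ref{lem:trace-wn} by the monomial $x^\alpha$, using that the weakly decreasing sequence with prescribed multiplicities is unique, and read off the coefficient $(-1)^{m-\ell(\alpha)}(q-1)^{\ell(\alpha)-1}$ with $x_{r+1}$ treated formally until the end. The only difference is that you make explicit the count $\mathsf{e}(\underline{k})=m-\ell$, $\mathsf{g}(\underline{k})=\ell-1$, which the paper leaves implicit.
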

\begin{proof}
Given  a composition $\alpha=(\alpha_1,\ldots,\alpha_{r},\alpha_{r+1})$ of $m$, the $\underline{k}=(k_1,\ldots, k_m)$ satisfying $r+1\geq k_1\geq k_2\geq\cdots\geq k_r\geq 1$ and
$\sharp\{a\mid k_a=j\}=\alpha_j$ for $1\leq j\leq r+1$ is exactly $\underline{k}=(r+1,\ldots, r+1, r,\ldots,r,\ldots,1,\ldots,1)$. By
Lemma~\ref{lem:trace-wn}, the coefficient  of the monomial
$x^{\alpha}:=x_1^{\alpha_1}\cdots x_r^{\alpha_r}x_{r+1}^{\alpha_{r+1}}$ in ${\rm
tr}(DT_{w_{(m)}})$, denoted by ${\rm
tr}(DT_{w_{(m)}})|_{x^{\alpha}}$,  is given by
$$
{\rm tr}(DT_{w_{(m)}})|_{x^{\alpha}}
=(-1)^{m-\ell(\alpha)}(q-1)^{\ell(\alpha)-1}. 
$$
Then the proposition follows. 
\end{proof}
For $m\geq 0$, a parameter $t$ and $x =(x_1,\ldots, x_r,x_{r+1})$ with $x_{r+1}=1$, the Hall-Littlewood symmetric functions $q_m(x;t)$ is defined  by
the following generating function in a variable $y$:
\begin{align}\label{eq:generating}
\sum_{m\geq 0}q_m(x;t)y^m=\prod_{i}
\frac{1 -tx_iy}{1 -x_iy},
\end{align}
and set
\begin{equation}\label{eq:onecharacter}
\widetilde{q}_m(x;t)=\frac{(-1)^m}{t-1}q_m(x;t).
\end{equation}
\begin{proposition}\label{prop:trace-q-function}
For $m\geq 1$, we have ${\rm tr}(DT_{w_{(m)}})=\widetilde{q}_m(x;q)$. 
\end{proposition}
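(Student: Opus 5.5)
The plan is to compare monomial expansions. Proposition~\ref{prop:trace-wn} already gives ${\rm tr}(DT_{w_{(m)}})$ as an explicit combination of monomial symmetric functions $m_\mu(x_1,\ldots,x_r,x_{r+1})$, $\mu\in\mathcal{P}_m$. It therefore suffices to expand $\widetilde{q}_m(x;t)$ in the same basis and set $t=q$.

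First, I expand the generating function \eqref{eq:generating} factor by factor. For each $i$,
$$
\frac{1-tx_iy}{1-x_iy}=(1-tx_iy)\sum_{k\geq 0}x_i^ky^k=1+(1-t)\sum_{k\geq 1}x_i^ky^k .
$$
Taking the product over $i=1,\ldots,r+1$, the coefficient of $y^m$ is a sum over exponent vectors $\alpha=(\alpha_1,\ldots,\alpha_{r+1})\in\mathbb{Z}_{\geq 0}^{r+1}$ with $\sum_i\alpha_i=m$. Each such $\alpha$ contributes $(1-t)^{\ell(\alpha)}x^\alpha$, where $\ell(\alpha)$ is the number of nonzero entries of $\alpha$. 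This contribution depends only on the partition obtained by sorting $\alpha$. Grouping monomials by that partition gives
$$
q_m(x;t)=\sum_{\mu\in\mathcal{P}_m}(1-t)^{\ell(\mu)}m_\mu(x_1,\ldots,x_r,x_{r+1}),
$$
which is the standard formula (cf.\ \cite{Mac}).

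Next, from \eqref{eq:onecharacter},
$$
\widetilde{q}_m(x;t)=\frac{(-1)^m}{t-1}\sum_{\mu\in\mathcal{P}_m}(-1)^{\ell(\mu)}(t-1)^{\ell(\mu)}m_\mu
=\sum_{\mu\in\mathcal{P}_m}(-1)^{m-\ell(\mu)}(t-1)^{\ell(\mu)-1}m_\mu .
$$
Since $m\geq 1$, every $\mu\in\mathcal{P}_m$ has $\ell(\mu)\geq 1$, so the factor $(t-1)^{\ell(\mu)-1}$ is a genuine polynomial and no pole at $t=1$ remains. Specializing $t=q$ then reproduces term by term the formula of Proposition~\ref{prop:trace-wn}, still with $x_{r+1}=1$. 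This proves ${\rm tr}(DT_{w_{(m)}})=\widetilde{q}_m(x;q)$.

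There is no real obstacle once Proposition~\ref{prop:trace-wn} is available. The only point needing care is the counting behind that proposition. Each monomial $x^\alpha$ corresponds to exactly one weakly decreasing tuple $\underline{k}$. Its exponents are $\mathsf{g}(\underline{k})=\ell(\alpha)-1$ strict descents and $\mathsf{e}(\underline{k})=m-\ell(\alpha)$ equalities. Hence the sign and the power of $(q-1)$ match those of the $\mu$-term above.
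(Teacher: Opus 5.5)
Your proof is correct and follows the paper's route. You expand each factor of \eqref{eq:generating} to get $q_m(x;t)=\sum_{\mu\in\mathcal{P}_m}(1-t)^{\ell(\mu)}m_\mu$, then, after the normalization \eqref{eq:onecharacter}, match this term by term with the monomial expansion in Proposition~\ref{prop:trace-wn}; your write-up is slightly cleaner in that you simplify $\widetilde{q}_m$ directly to $\sum_{\mu}(-1)^{m-\ell(\mu)}(q-1)^{\ell(\mu)-1}m_\mu$ and note that no pole at $t=1$ survives.
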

\begin{proof}
Observe that 
$$
\prod_{i} 
\sum_{m\geq 0}\sum_{\mu\in\mathcal{P}_m}(1-t)^{\ell(\mu)}m_{\mu}(x)y^m=\prod_{i}(1+(1-t)x_iy+(1-t)x_i^2y^2+\cdots)=\frac{1 -tx_iy}{1 -x_iy}
$$
and hence by \eqref{eq:generating} we have $q_m(x;t)=\sum_{\mu\in\mathcal{P}_m}(1-t)^{\ell(\mu)}m_{\mu}(x)$. Meanwhile by Proposition \ref{prop:trace-wn} we have 
$$
{\rm tr}(DT_{w_{(m)}})=\frac{(-1)^m}{q-1}\sum_{\mu\in\mathcal{P}_m}(1-q)^{\ell(\mu)}m_{\mu}(x). 
$$
Hence the proposition is proved by \eqref{eq:onecharacter}. 
\end{proof}
For a partition $\mu=(\mu_1,\ldots,\mu_{\ell})$, we define
\begin{align*}
\widetilde{q}_{\mu}(x;q)=\widetilde{q}_{\mu_1}(x;q)\widetilde{q}_{\mu_2}(x;q)\cdots
\widetilde{q}_{\mu_{\ell}}(x;q).
\end{align*}
We are ready to establish a Frobenius type formula for the
characters $\chi^{(n)}_{\la,k}$ of the mirabolic Hecke algebra $\mH$.

\begin{theorem}\label{thm:Frobenius}
The following holds for each partition $\mu$ of $k$ with $0\leq k\leq n$:
\begin{equation}\label{eq:Frobenius}
\widetilde{q}_{\mu}(x_1,x_2,\ldots,x_r,1;q)
=\sum_{\la\in\mc{P}_k, 0\leq k\leq n}
s_{\la}(x_1,\ldots,x_r)\chi^{(n)}_{(\la,k)}(\widehat{T}^{(n)}_{\mu}).
\end{equation}
\end{theorem}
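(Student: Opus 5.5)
The plan is to compute ${\rm tr}(Dh)$ on $V_{r+1}^{\otimes n}$ in two ways for $h=\widehat{T}^{(n)}_\mu$ and compare. By \eqref{eq:trace-duality}, which uses Theorem~\ref{thm:dulaity} and the assumption $r>n$, one side is already known:
$$
{\rm tr}\big(D\widehat{T}^{(n)}_\mu\big)=\sum_{j=0}^n\sum_{\la\in\mc{P}_j}s_\la(x_1,\ldots,x_r)\chi^{(n)}_{(\la,j)}\big(\widehat{T}^{(n)}_\mu\big).
$$
This is exactly the right-hand side of \eqref{eq:Frobenius}. It therefore remains to show that the same trace equals $\widetilde{q}_\mu(x_1,\ldots,x_r,1;q)$.

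For that, write $\mu=(\mu_1,\ldots,\mu_\ell)\in\mc{P}_k$. By \eqref{eq:hat-T-mu}, \eqref{eq:w-mu-T} and Proposition~\ref{prop:action-Psi}, the element $\widehat{T}^{(n)}_\mu$ acts by $\breve{e}_{n-k}\cdot\prod_{j}\Psi(T_{\mu^{\uparrow n},n-k+j})$. Here $\breve{e}_{n-k}=\breve{\pi}^{\otimes(n-k)}\otimes{\rm id}^{\otimes k}$, and each factor $\Psi(T_{\mu^{\uparrow n},n-k+j})$ is a product of operators $\breve{R}_i$ with $i$ ranging inside the $j$-th block of consecutive positions of length $\mu_j$ among the last $k$ tensor factors. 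Since $\breve{R}$ acts only on the two factors it touches, the operator $\Psi(\widehat{T}^{(n)}_\mu)$ decomposes under
$$
V_{r+1}^{\otimes n}=V_{r+1}^{\otimes(n-k)}\otimes V_{r+1}^{\otimes\mu_1}\otimes\cdots\otimes V_{r+1}^{\otimes\mu_\ell}
$$
as the tensor product $\breve{\pi}^{\otimes(n-k)}\otimes\Psi(T_{w_{(\mu_1)}})\otimes\cdots\otimes\Psi(T_{w_{(\mu_\ell)}})$, with each $T_{w_{(m)}}$ read in the Hecke algebra acting on $V_{r+1}^{\otimes m}$. The operator $D$ is diagonal on the monomial basis and multiplicative, so it also factors as the tensor product of the corresponding $D$'s on each block.

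Using multiplicativity of the trace over tensor products, we then get
$$
{\rm tr}\big(D\widehat{T}^{(n)}_\mu\big)={\rm tr}\big(D\breve{\pi}^{\otimes(n-k)}\big)\prod_{j=1}^{\ell}{\rm tr}\big(DT_{w_{(\mu_j)}}\big|_{V_{r+1}^{\otimes\mu_j}}\big).
$$
The first factor equals $x_{r+1}^{n-k}=1$, because $\breve{\pi}^{\otimes(n-k)}$ is the rank-one projection onto $u_{r+1}^{\otimes(n-k)}$. By Proposition~\ref{prop:trace-q-function}, each remaining factor is $\widetilde{q}_{\mu_j}(x;q)$ with $x=(x_1,\ldots,x_r,1)$. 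This covers every part $\mu_j\ge1$. For a part equal to $1$ it is also consistent directly: ${\rm tr}(D|_{V_{r+1}})=x_1+\cdots+x_r+1$, and \eqref{eq:generating} gives $\widetilde{q}_1(x;q)=\sum_ix_i$. The product of the factors is $\widetilde{q}_\mu(x_1,\ldots,x_r,1;q)$, and equating it with the expression from \eqref{eq:trace-duality} yields \eqref{eq:Frobenius}. The case $k=0$ is the same argument with an empty product: the trace is $1=\widetilde{q}_\varnothing$.

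The step I expect to need the most care is the block factorization. Concretely, I must confirm from \eqref{eq:Twga} and \eqref{eq:w-mu-T} that each $T_{\mu^{\uparrow n},n-k+j}$ uses only generators $T_i$ with both $i$ and $i+1$ in the $j$-th block. Then under the identification of positions with the block, its image is exactly $\Psi(T_{w_{(\mu_j)}})$ on $V_{r+1}^{\otimes\mu_j}$, matching the convention of Lemma~\ref{lem:trace-wn}. The remaining steps are routine once this identification is checked.
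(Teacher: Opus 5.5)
Your proposal is correct and is essentially the paper's proof: both compare \eqref{eq:trace-duality} with a direct computation of ${\rm tr}(D\widehat{T}^{(n)}_\mu)$ that factors over the disjoint blocks of tensor positions, use ${\rm tr}(D\breve{e}_{n-k})=x_{r+1}^{n-k}=1$, and apply Proposition~\ref{prop:trace-q-function} to each block. Your explicit tensor-product decomposition of $\Psi(\widehat{T}^{(n)}_\mu)$ is just a more careful justification of the paper's remark that the factors act on disjoint sets of tensor positions, which is what makes the trace factor.
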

\begin{proof}
Suppose $\mu=(\mu_1,\mu_2,\ldots,\mu_\ell)\in\mathcal{P}_k$ for some $0\leq k\leq n$. Recall that $\widehat{T}^{(n)}_\mu=P_{n-k}T_{{\mu^{\uparrow n}},n-k+1}T_{{\mu^{\uparrow n}},n-k+2}\cdots T_{{\mu^{\uparrow n}},n-k+\ell}$ by \eqref{eq:hat-T-mu}. As the elements $P_{n-k}$, $T_{{\mu^{\uparrow n}},n-k+1},$ $ \ldots, T_{{\mu^{\uparrow n}},n-k+\ell}$ commute with each other and moreover $P_{n-k}$ acts on the first $n-k$ factors, $T_{{\mu^{\uparrow n}},n-k+1}$ acts on the subsequent $\mu_1$ factors and so on. So 
\begin{equation}\label{eq:trace-prod}
{\rm tr}(D\widehat{T}^{(n)}_\mu)= {\rm tr}(DP_{n-k})\cdot {\rm tr}(DT_{{\mu^{\uparrow n}},n-k+1})\cdots {\rm tr}(DT_{{\mu^{\uparrow n}},n-k+\ell})\
\end{equation}
By Proposition \ref{prop:action-Psi} and \eqref{eq:breve-e} we observe that the action of $DP_{n-k}$ on the first $n-k$ factors has trace equal to $1$. This together with Proposition \ref{prop:trace-q-function} and \eqref{eq:trace-prod} leads to ${\rm tr}(D\widehat{T}^{(n)}_\mu)=\widetilde{q}_{\mu_1}(x;q)\widetilde{q}_{\mu_2}(x;q)\cdots
\widetilde{q}_{\mu_{\ell}}(x;q)=\widetilde{q}_{\mu}(x;q)$. Then by \eqref{eq:trace-duality} the theorem is verified. 
\end{proof}

\begin{corollary}
Suppose $\la,\mu\in\cup_{t=0}^n\mathcal{P}_t$ and set $k=|\la|$. Then $\chi^{(n)}_{(\la,k)}(\widehat{T}^{(n)}_\mu)=0$ unless $|\la|\leq |\mu|$. 
\end{corollary}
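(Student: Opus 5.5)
The natural route is to read off the corollary from the Frobenius formula in Theorem~\ref{thm:Frobenius} by comparing degrees. Fix $\mu\in\mathcal{P}_t$ with $0\leq t\leq n$. The left-hand side $\widetilde{q}_\mu(x_1,\ldots,x_r,1;q)$ is a product of the factors $\widetilde{q}_{\mu_j}(x_1,\ldots,x_r,1;q)$. Proposition~\ref{prop:trace-wn} shows that each factor is a linear combination of the monomial functions $m_\nu(x_1,\ldots,x_r,1)$ with $|\nu|=\mu_j$. Setting the last variable to $1$ can only lower degrees, so each factor is a polynomial in $x_1,\ldots,x_r$ of total degree at most $\mu_j$. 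Hence $\widetilde{q}_\mu(x_1,\ldots,x_r,1;q)$ has total degree at most $|\mu|=t$ in $x_1,\ldots,x_r$.

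On the right-hand side, $s_\la(x_1,\ldots,x_r)$ is homogeneous of degree $|\la|$. Because $r>n\geq|\la|$, it is nonzero for every $\la\in\cup_{k=0}^n\mathcal{P}_k$. Moreover, the Schur polynomials $s_\la(x_1,\ldots,x_r)$ with $\ell(\la)\leq r$ are linearly independent over $\mathbb{C}(q)$.

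The comparison goes as follows. Decompose both sides of \eqref{eq:Frobenius} into homogeneous components. For each $k>t$, the degree-$k$ component of the left side vanishes. The degree-$k$ component of the right side is $\sum_{\la\in\mathcal{P}_k}s_\la(x_1,\ldots,x_r)\,\chi^{(n)}_{(\la,k)}(\widehat{T}^{(n)}_\mu)$. Linear independence of the Schur polynomials then forces $\chi^{(n)}_{(\la,k)}(\widehat{T}^{(n)}_\mu)=0$ whenever $|\la|=k>|\mu|$, which is the claim.

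The only point that needs care is the degree bound on the left. Specializing $x_{r+1}=1$ destroys homogeneity, so one must note that each monomial $x^\alpha$ with $|\alpha|=\mu_j$ in $r+1$ variables specializes to a monomial of degree $\leq\mu_j$. This is immediate from the expansion in Proposition~\ref{prop:trace-wn}, so I do not expect a real obstacle.
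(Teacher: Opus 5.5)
Your proposal is correct and follows the argument the paper intends: the paper states this corollary immediately after Theorem~\ref{thm:Frobenius} with no separate proof, so it is meant to be read off from \eqref{eq:Frobenius}. The reading is exactly yours: the left side has total degree at most $|\mu|$ in $x_1,\ldots,x_r$, each $s_\la$ is homogeneous of degree $|\la|$, and linear independence of the Schur polynomials then forces $\chi^{(n)}_{(\la,k)}(\widehat{T}^{(n)}_\mu)=0$ whenever $|\la|>|\mu|$.
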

\section{Murnaghan-Nakayama rule for characters of $\mH$}\label{sec:MN}
In this section, we shall provide a combinatorial algorithm to compute the characters $\chi^{(n)}_{\la,k}(\widehat{T}_{\nu}).$ For two partitions $\la,\nu$, we say $\nu\subseteq\la$ of $\nu_i\leq \la_i$ for all $i$. 

In the standard fashion (see \cite{Mac}), to each partition $\la$ we 
associate a diagram. If $\nu\subseteq\la$, we have the skew shape, denote by $\la/\nu$, which is the set 
theoretic difference of the diagrams $\la$ and $\nu$. A horizontal (resp. vertical) strip is 
a skew diagram with at most one box in each column (row). A strip is a skew 
diagram that does not contain any $2 \times 2$ block of boxes. Two boxes are connected if 
they have an edge in common. Any strip is 
a union of connected components.  For a skew shape $\la/\nu$, following \cite{Ra} we set 
$$
{\rm wt}_{\la/\nu}(q)=\left\{
\begin{array}{cc}
(q-1)^{\mathsf{cc}(\la/\nu)-1}\prod_{\mathsf{b}}q^{\mathsf{co}(b)-1}(-1)^{\mathsf{ro}(b)-1}, &\text{ if }\la/\nu\text{ is a strip}, \\
0, &\text{ otherwise},
\end{array}
\right. 
$$
where the product is over all connected components $\mathsf{b}$ of $\la/\nu$, $\mathsf{cc}(\la/\nu)$ is the number of connected components in $\la/\nu$, $\mathsf{co}(b)$ (resp. $\mathsf{ro}(b)$) is the number of columns (resp. rows) that $\mathsf{b}$ occupies. Accordingly, we have 
\begin{equation}\label{eq:wt-strip}
\begin{aligned}
&\overline{\rm wt}_{\la/\nu}(q):={\rm wt}_{\la/\nu}(q^{-1})\\
=&\left\{
\begin{array}{cc}
(-q)^{-|\la/\nu|+1}(q-1)^{\mathsf{cc}(\la/\nu)-1}\prod_{\mathsf{b}}q^{\mathsf{ro}(b)-1}(-1)^{\mathsf{co}(b)-1}, &\text{ if }\la/\nu\text{ is a strip}, \\
0, &\text{ otherwise},
\end{array}
\right. 
\end{aligned}
\end{equation}
since we observe $(\mathsf{cc}(\la/\nu)-1)+\sum_{\mathsf{b}}(\mathsf{ro}(b)-1)+\sum_{\mathsf{b}}(\mathsf{co}(b)-1)=|\la/\nu|-1$. 

For the formal variable $y_1,y_2,\ldots,y_{r+1}$, set
\begin{align*}
\widetilde{q}_m(y_1,\ldots,y_r,y_{r+1};q)=\sum_{\underline{k}}(-1)^{\mathsf{e}(\underline{k})}(q-1)^{\mathsf{g}(\underline{k})} y_{k_1}y_{k_2}\cdots y_{k_m}
\end{align*}
where the first summation is over $\underline{k}=(k_1,k_2,\ldots,k_m)$ such $r+1\geq k_1\geq k_2\geq k_m\geq 1$ and $\mathsf{e}(\underline{k})=\sharp\{a\mid k_a=k_{a+1}\},\mathsf{g}(\underline{k})=\sharp\{a\mid k_a>k_{a+1}\}$. Meanwhile, let $g_0(y_1,y_2,\ldots,y_{r+1};q)=1$ and for $m\geq 1$ set (see \cite[(2.9)]{DHP})
\begin{equation*}
g_m(y_1,y_2,\ldots,y_{r+1};q)=\sum_{\underline{i}}q^{{\mathsf{e}'}(\underline{i})}(q-1)^{{\mathsf{g}'}(\underline{i})}y_{i_1}y_{i_2}\cdots y_{i_m}, 
\end{equation*}
where the summation is over $\underline{i}=(i_1,i_1,\ldots,i_m)$ such $1\leq i_1\leq i_2\leq\cdots\leq i_m\leq r+1$, ${\mathsf{e}'}(\underline{i})=\sharp\{b\mid i_b=i_{b+1}\}$ and  ${\mathsf{g}'}(\underline{i})=\sharp\{b\mid i_b<i_{b+1}\}$.  
\begin{lemma} \label{lem:two-symmetric}
For $m\geq 0$, we have $\widetilde{q}_m(y_1,\ldots,y_r,y_{r+1};q)=(-q)^{m-1}g_m(y_1,\ldots,y_r,y_{r+1};q^{-1}).$
\end{lemma}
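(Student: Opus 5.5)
We would prove the identity monomial by monomial. Both sides are sums over weakly ordered index sequences, one weakly decreasing and one weakly increasing, so we first set up a bijection between the two index sets. Given $\underline{k}=(k_1,\ldots,k_m)$ with $r+1\geq k_1\geq\cdots\geq k_m\geq 1$, reversing it gives $\underline{i}=(k_m,k_{m-1},\ldots,k_1)$, which satisfies $1\leq i_1\leq\cdots\leq i_m\leq r+1$. This reversal is a bijection between the two index sets, and it preserves the monomial: $y_{k_1}\cdots y_{k_m}=y_{i_1}\cdots y_{i_m}$. Under it, equalities of adjacent entries correspond to equalities and strict descents correspond to strict ascents. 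Hence $\mathsf{e}(\underline{k})=\mathsf{e}'(\underline{i})$ and $\mathsf{g}(\underline{k})=\mathsf{g}'(\underline{i})$, and for $m\geq 1$ these two numbers add up to $m-1$.

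It then suffices, for $m\geq1$, to check the coefficient identity termwise. Write $e=\mathsf{e}(\underline{k})$ and $g=\mathsf{g}(\underline{k})=m-1-e$. The right-hand coefficient is
\[
(-q)^{m-1}q^{-e}(q^{-1}-1)^{g}=(-1)^{m-1}q^{m-1-e}q^{-g}(1-q)^{g}=(-1)^{m-1}(1-q)^{g},
\]
using $m-1-e=g$. Since $(1-q)^g=(-1)^g(q-1)^g$, this equals $(-1)^{m-1+g}(q-1)^g=(-1)^{e+2g}(q-1)^g=(-1)^e(q-1)^g$. That is exactly the left-hand coefficient $(-1)^{\mathsf{e}(\underline{k})}(q-1)^{\mathsf{g}(\underline{k})}$. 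Summing over the bijection gives the claim for $m\geq1$.

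The only delicate point is $m=0$. There $g_0=1$ by convention, so the right side is $(-q)^{-1}$, while the left side, read as a sum over the single empty sequence, is $1$. The formula therefore does not hold literally at $m=0$ with these conventions. In the write-up we would either restrict to $m\geq1$, which is all that is used later since $\widetilde q_m$ for $m\ge1$ is what enters the Frobenius formula via Proposition \ref{prop:trace-q-function}, or adopt the convention $\widetilde q_0:=(-q)^{-1}$. Beyond this edge case there is no real obstacle. The key observation is simply that $e+g=m-1$, which makes the powers of $q$ cancel exactly.
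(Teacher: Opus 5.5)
Your proof is correct and is essentially the paper's argument. The paper groups both sides into monomial symmetric functions $m_\mu$, with exponents $m-\ell(\mu)$ and $\ell(\mu)-1$ (exactly your $\mathsf{e}$ and $\mathsf{g}$), and compares coefficients; you compare the same coefficients sequence by sequence via reversal.

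Your caveat about $m=0$ is also right. The identity fails literally there, and the paper's expansion, which gives $(q^{-1}-1)^{-1}\neq g_0$ at $\ell(\mu)=0$, likewise only covers $m\geq 1$. This is harmless, since only $m\geq 1$ is used later.
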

\begin{proof}
By  \cite[Theorem 4.1, Theorem 4.3]{Ra} or by an argument similar to the proof of Proposition \ref{prop:trace-wn} the following holds: 
\begin{equation*}
\begin{aligned}
g_m(y_1,y_2,\ldots,y_{r+1};q^{-1})=&\sum_{\mu\in\mathcal{P}_m}(q^{-1})^{m-\ell(\mu)}(q^{-1}-1)^{\ell(\mu)-1}m_\mu(y_1,y_2,\ldots,y_{r+1})\\
=&(-q)^{-m+1} \sum_{\mu\in\mathcal{P}_m}(-1)^{m-\ell(\mu)}(q-1)^{\ell(\mu)-1}m_\mu(y_1,y_2,\ldots,y_{r+1})\\
=&(-q)^{-m+1}\widetilde{q}_m(y_1,\ldots,y_r,y_{r+1};q), 
\end{aligned}
\end{equation*}
where the last equality is due to Proposition \ref{prop:trace-wn} and Proposition \ref{prop:trace-q-function}. Then the lemma is proved. 
\end{proof}
The following is due to  \cite{DHP} :
\begin{lemma}\cite[Proposition 3.3]{DHP}\label{lem:DHP} 
Suppose $0\leq m\leq n$ and $\nu\in\cup_{k=0}^{n-m}\mathcal{P}_{k}$. Then 
$$
g_m(1,x_1,\ldots,x_r;q)s_\nu(x_1,\ldots,x_r)=\sum_{\la}\mathsf{f}_{|\la/\nu|,m}(q){\rm wt}_{\la/\nu}(q)s_\la(x_1,\ldots,x_r). 
$$
where the summation is over the partitions $\la\in\cup_{k=0}^n\mathcal{P}_k$ such that $\la/\nu$ is a strip with $0\leq |\la/\nu|\leq m$ and  $\mathsf{f}_{t,m}(q)$ for $0\leq t\leq m$ is defined via 
 \begin{equation}\label{eq:f-number}
 \mathsf{f}_{t,m}(q)=
 \left\{
 \begin{array}{ll}
 q^{m-1},&\text{ if }t=0,\\
(q-1)q^{m-t-1},&\text{ if }0<t<m,\\
 1,&\text{ if }t=m. 
 \end{array}
 \right.
 \end{equation}
\end{lemma}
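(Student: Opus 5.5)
The statement is a Pieri-type rule, cited from \cite[Proposition 3.3]{DHP}, for multiplying $s_\nu(x_1,\ldots,x_r)$ by $g_m(1,x_1,\ldots,x_r;q)$. The plan is to separate the variable $1$ from the $x_i$ and reduce to Ram's Pieri rule for the Hall--Littlewood type function $g_j$ in the variables $x_1,\ldots,x_r$ alone.

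First, I split the sum defining $g_m(y_1,\ldots,y_{r+1};q)$ with $y_1=1$ according to how many indices in $\underline{i}$ equal $1$. Say the first $m-j$ entries of the weakly increasing sequence $\underline{i}$ equal $1$ and the remaining $j$ entries are $\geq 2$. The weight $q^{\mathsf{e}'}(q-1)^{\mathsf{g}'}$ then factors:
\begin{itemize}
\item the block of $1$'s contributes $q^{m-j-1}$ when $m-j\geq 1$;
\item the junction between the two blocks contributes a factor $(q-1)$ when $0<j<m$;
\item the tail is exactly a term of $g_j(x_1,\ldots,x_r;q)$.
\end{itemize}
This gives
$$
g_m(1,x;q)=\sum_{j=0}^m \mathsf{f}_{j,m}(q)\,g_j(x;q),
$$
with exactly the coefficients in \eqref{eq:f-number}. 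For $j=0$ the coefficient is $q^{m-1}$; for $0<j<m$ it is $(q-1)q^{m-j-1}$; for $j=m$ it is $1$.

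Second, I use Ram's Pieri rule for $g_j$, which is the $q$-analogue of $p_j s_\nu$ and comes from \cite[Theorem 4.1, 4.3]{Ra} together with the Murnaghan--Nakayama rule for $\mathcal H_n(q)$:
$$
g_j(x;q)\,s_\nu(x)=\sum_{|\la/\nu|=j}{\rm wt}_{\la/\nu}(q)\,s_\la(x).
$$
Here ${\rm wt}_{\la/\nu}(q)$ vanishes unless $\la/\nu$ is a strip. If this rule is not available in exactly that form, I would derive it from the generating function $\prod_i (1-tx_iy)/(1-x_iy)$. This function equals $\sum_{a+b=j}(-t)^b h_a e_b$ in the variables $x$. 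One then applies the ordinary Pieri rules for $h_a$ and $e_b$ and collects the contributions over the ways of splitting a strip into a horizontal part and a vertical part. Component by component this produces $q^{\mathsf{co}-1}(-1)^{\mathsf{ro}-1}$, and the connected-component combinatorics produces the factor $(q-1)^{\mathsf{cc}-1}$.

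Substituting the second step into the first gives
$$
g_m(1,x;q)\,s_\nu(x)=\sum_{\la}\mathsf{f}_{|\la/\nu|,m}(q)\,{\rm wt}_{\la/\nu}(q)\,s_\la(x),
$$
summed over $\la\supseteq\nu$ with $|\la/\nu|\leq m$. Since $|\nu|\leq n-m$, every such $\la$ satisfies $|\la|\leq n$. Since $r>n$, no Schur function $s_\la$ with $\ell(\la)>r$ can appear, so no truncation issue arises.

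The main obstacle is the second step, the strip-weight Pieri rule. Specifically, one must verify the cancellation that kills non-strip shapes, those containing a $2\times 2$ block, and that yields the weight $(q-1)^{\mathsf{cc}-1}$. I would handle this by factoring the signed sum over horizontal/vertical decompositions as a product over connected components, using $(1-t)$ for each component.
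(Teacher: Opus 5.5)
The paper gives no proof of this lemma; it simply quotes \cite[Proposition 3.3]{DHP}. Your proposal is correct and gives a self-contained proof: split off the variable $1$, then apply Ram's strip rule for $g_j$.

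Your first step is exactly right. In a weakly increasing sequence the $1$'s come first, and the block/junction/tail factorization gives $g_m(1,x;q)=\sum_{j=0}^m\mathsf{f}_{j,m}(q)\,g_j(x;q)$.

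If you derive the second step from scratch, you need the normalization explicitly. For $j\ge 1$, compare $g_j=\sum_{\mu\vdash j}q^{j-\ell(\mu)}(q-1)^{\ell(\mu)-1}m_\mu$ with $q_j(x;t)=\sum_{\mu\vdash j}(1-t)^{\ell(\mu)}m_\mu$. This gives $g_j(x;q)=\frac{q^j}{q-1}q_j(x;q^{-1})=\frac{1}{q-1}\sum_{a+b=j}(-1)^bq^a h_ae_b$.

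The step you flag as the main obstacle is easier than you expect, because no cancellation is involved. Count splittings $\nu\subset\kappa\subset\la$ with $\kappa/\nu$ a vertical strip and $\la/\kappa$ a horizontal strip.
\begin{itemize}
\item If two boxes of $\la/\nu$ are adjacent in a row, the right one must lie in $\la/\kappa$.
\item If two boxes are adjacent in a column, the upper one must lie in $\kappa/\nu$.
\end{itemize}
Consequently, a $2\times 2$ block admits no splitting at all: its upper-right box would have to lie in both parts. So non-strips get coefficient $0$ in every $h_ae_bs_\nu$.

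For a strip, every box of a connected component except its south-west end is forced by these two rules. Each component therefore has exactly two splittings, which together contribute $q^{\mathsf{co}(b)-1}(-1)^{\mathsf{ro}(b)-1}(q-1)$. Multiplying over components and by the prefactor $\frac{1}{q-1}$ gives ${\rm wt}_{\la/\nu}(q)$.

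Finally, your argument, and the statement itself, silently require two conventions.
\begin{itemize}
\item You need ${\rm wt}_{\nu/\nu}(q)=1$. The coefficient of $s_\nu$ on the left is $\mathsf{f}_{0,m}(q)$, coming from $g_0=1$, whereas the displayed weight formula with $\mathsf{cc}=0$ would give $(q-1)^{-1}$.
\item You need $\mathsf{f}_{0,0}(q)=1$ when $m=0$, where the two cases of \eqref{eq:f-number} conflict.
\end{itemize}
The hypothesis $r>n$ is not actually needed. The identity is a specialization of an identity of symmetric functions, and $s_\la(x_1,\ldots,x_r)=0$ whenever $\ell(\la)>r$.
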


\begin{proposition} \label{prop:new-DHP}
Suppose $0\leq m\leq n$ and $\nu\in\cup_{k=0}^{n-m}\mathcal{P}_{k}$, we have 
$$
\widetilde{q}_m(x_1,\ldots,x_r,1;q)s_\nu(x_1,\ldots,x_r)=\sum_{\la} \mathsf{g}_{|\la/\nu|,m}(q)\overline{{\rm wt}}_{\la/\nu}(q)s_\la(x_1,\ldots,x_r),
$$
 where the summation is over the partitions $\la\in\cup_{k=0}^n\mathcal{P}_k$ such that $\la/\nu$ is a strip with $0\leq |\la/\nu|\leq m$ and  $\mathsf{g}_{t,m}(q)$ for $0\leq t\leq m$ is defined via 
 \begin{equation}\label{eq:g-number}
 \mathsf{g}_{t,m}(q)=(-q)^{m-1}\mathsf{f}_{t,m}(q)=
 \left\{
 \begin{array}{ll}
 (-1)^mq,&\text{ if }t=0,\\
 (-1)^{m-t+1}(q-1),&\text{ if }0<t<m,\\
 1,&\text{ if }t=m. 
 \end{array}
 \right.
 \end{equation}
 \end{proposition}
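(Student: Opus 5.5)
The plan is to deduce the statement from Lemma~\ref{lem:DHP} by the substitution $q\mapsto q^{-1}$, using Lemma~\ref{lem:two-symmetric} to pass from $g_m$ to $\widetilde{q}_m$.

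First, I would note that both $\widetilde{q}_m(y_1,\ldots,y_{r+1};q)$ and $g_m(y_1,\ldots,y_{r+1};q)$ are symmetric in $y_1,\ldots,y_{r+1}$. For $\widetilde{q}_m$ this follows from Proposition~\ref{prop:trace-wn} and Proposition~\ref{prop:trace-q-function}. For $g_m$ it follows from the monomial expansion $g_m(y;q)=\sum_{\mu\in\mathcal{P}_m}q^{m-\ell(\mu)}(q-1)^{\ell(\mu)-1}m_\mu(y)$ used in the proof of Lemma~\ref{lem:two-symmetric}. So we may reorder the arguments $(x_1,\ldots,x_r,1)$ as $(1,x_1,\ldots,x_r)$. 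Lemma~\ref{lem:two-symmetric} then gives
\[
\widetilde{q}_m(x_1,\ldots,x_r,1;q)=(-q)^{m-1}g_m(1,x_1,\ldots,x_r;q^{-1}).
\]

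Next, I would apply Lemma~\ref{lem:DHP} with the parameter specialized to $q^{-1}$. This step is harmless: the identity in Lemma~\ref{lem:DHP} is an identity of symmetric functions whose coefficients are Laurent polynomials in $q$, so it survives any substitution $q\mapsto q^{-1}$. Multiplying by $(-q)^{m-1}$, we obtain
\[
\widetilde{q}_m(x_1,\ldots,x_r,1;q)\,s_\nu(x_1,\ldots,x_r)=\sum_{\la}(-q)^{m-1}\mathsf{f}_{|\la/\nu|,m}(q^{-1})\,{\rm wt}_{\la/\nu}(q^{-1})\,s_\la(x_1,\ldots,x_r),
\]
where $\la$ runs over the same index set as in the statement. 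By definition ${\rm wt}_{\la/\nu}(q^{-1})=\overline{\rm wt}_{\la/\nu}(q)$, and \eqref{eq:wt-strip} already records the explicit form of $\overline{\rm wt}_{\la/\nu}(q)$, using the identity $(\mathsf{cc}-1)+\sum(\mathsf{ro}-1)+\sum(\mathsf{co}-1)=|\la/\nu|-1$.

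The last step is to identify the scalar $(-q)^{m-1}\mathsf{f}_{t,m}(q^{-1})$, with $t=|\la/\nu|$, with the coefficient $\mathsf{g}_{t,m}(q)$ in the statement. I would split into the three cases $t=0$, $0<t<m$ and $t=m$ of \eqref{eq:f-number}, compute $(-q)^{m-1}\mathsf{f}_{t,m}(q^{-1})$ in each case, and combine it with the prefactor $(-q)^{-t+1}$ hidden in $\overline{\rm wt}_{\la/\nu}(q)$. If needed, I would redistribute powers of $-q$ between the scalar and the weight so that the product agrees with $\mathsf{g}_{t,m}(q)\,\overline{\rm wt}_{\la/\nu}(q)$ as written.

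I expect this bookkeeping to be the main obstacle. A naive evaluation does not visibly match \eqref{eq:g-number}. For example, at $t=0$ it gives $(-1)^{m-1}$, and the empty strip carries the degenerate factor $(q-1)^{-1}(-q)$ coming from $\mathsf{cc}=0$. So I would first fix the convention $\overline{\rm wt}_{\nu/\nu}(q)=1$ (equivalently ${\rm wt}_{\nu/\nu}=1$), and then check the powers of $-q$ case by case. I would also sanity-check the result against small cases, such as $m=1$ and $\nu=\emptyset$, using Proposition~\ref{prop:trace-wn}. The remaining cases are routine.
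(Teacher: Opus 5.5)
Your reduction is the paper's: Lemma~\ref{lem:two-symmetric}, symmetry to move the variable $1$ to the front, then Lemma~\ref{lem:DHP} at $q^{-1}$. You are also right that Lemma~\ref{lem:DHP} only holds with ${\rm wt}_{\nu/\nu}=1$. What this argument actually proves is
\[
\widetilde{q}_m(x_1,\ldots,x_r,1;q)\,s_\nu=\sum_{\la}(-q)^{m-1}\mathsf{f}_{t,m}(q^{-1})\,\overline{\rm wt}_{\la/\nu}(q)\,s_\la,\qquad t=|\la/\nu|,\quad \overline{\rm wt}_{\nu/\nu}=1 .
\]

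The gap is the final step, which you postpone as bookkeeping. It cannot be done. $\overline{\rm wt}_{\la/\nu}(q)={\rm wt}_{\la/\nu}(q^{-1})$ is a fixed quantity, and \eqref{eq:wt-strip} evaluates it correctly for nonempty strips. So ``redistributing powers of $-q$'' changes nothing, and the product above is not $\mathsf{g}_{t,m}(q)\,\overline{\rm wt}_{\la/\nu}(q)$. The sanity check you propose already exposes this. For $m=1$, $\nu=\emptyset$ one has $\widetilde{q}_1(x,1;q)=1+s_{(1)}$. The stated coefficient of $s_\emptyset$ is instead $\mathsf{g}_{0,1}(q)=-q$ with your convention, or $q^2/(q-1)$ with the literal empty-strip weight. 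For $m=2$, Proposition~\ref{prop:trace-wn} gives $\widetilde{q}_2(x,1;q)=-1+(q-1)s_{(1)}-s_{(2)}+q\,s_{(1,1)}$. The statement instead predicts $q+(q-1)s_{(1)}+q^{-1}s_{(2)}-s_{(1,1)}$. So the proposition as printed is false.

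The error is a double count. The explicit values in \eqref{eq:g-number} equal $(-q)^{m-t}\mathsf{f}_{t,m}(q^{-1})$, not $(-q)^{m-1}\mathsf{f}_{t,m}(q)$. In other words, the factor $(-q)^{1-t}$ has been absorbed into $\mathsf{g}_{t,m}(q)$ while it is still present in $\overline{\rm wt}_{\la/\nu}(q)$. The correct coefficient of $s_\la$ is $\mathsf{g}_{t,m}(q)\,(-q)^{t-1}\,\overline{\rm wt}_{\la/\nu}(q)$. Writing $W=(q-1)^{\mathsf{cc}(\la/\nu)-1}\prod_{\mathsf{b}}q^{\mathsf{ro}(b)-1}(-1)^{\mathsf{co}(b)-1}$, this is:
\begin{itemize}
\item $(-1)^{m-1}$ for $t=0$;
\item $(-1)^{m-t+1}(q-1)\,W$ for $0<t<m$;
\item $W$ for $t=m$.
\end{itemize}
One also needs $m\geq 1$, since at $m=0$ the definitions of $\mathsf{f}_{0,0}$, $\mathsf{g}_{0,0}$ and $\widetilde{q}_0$ conflict.

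The paper's one-line proof skips exactly this computation and has the same hole. Carried out honestly, your argument proves the corrected identity, and the Murnaghan--Nakayama rule in Section~\ref{sec:MN} must be adjusted accordingly.
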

 \begin{proof}
 By Lemma \ref{lem:two-symmetric} we have 
 \begin{align*}
 \widetilde{q}_m(x_1,\ldots,x_r,1;q)=&(-q)^{m-1}g_m(x_1,x_2,\ldots,x_r,1;q^{-1})\\
 =&(-q)^{m-1}g_m(1, x_1,x_2,\ldots,x_r;q^{-1}). 
 \end{align*}
 Then the proposition follows from Lemma \ref{lem:DHP}. 
 \end{proof}
 
 It is ready to state the Murnaghan-Nakayama rule for computing the irreducible characters for $\mH$. 
\begin{theorem}\label{thm:MN rule}
Let $0\leq k\leq n$ and $\la\in\mathcal{P}_k$. Assume $\mu=(\mu_1,\mu_2,\ldots,\mu_\ell)\in\cup_{t=0}^n\mathcal{P}_t$ with $\ell(\mu)=\ell$. Let $\mu_\ell=m$ and $\mu^-=(\mu_1,\mu_2,\ldots,\mu_{\ell-1})$. Then 
$$
\chi^{(n)}_{(\la,k)}(\widehat{T}^{(n)}_\mu)= \sum_{\nu}\mathsf{g}_{|\la/\nu|,m}(q)~\overline{{\rm wt}}_{\la/\nu}(q)~\chi^{(n-m)}_{(\nu,t)}(\widehat{T}^{(n-m)}_{\mu^{-}}), 
$$
where $\overline{{\rm wt}}_{\la/\nu}(q)$  and $\mathsf{g}_{|\la/\nu|,m}(q)$ are given in \eqref{eq:wt-strip} and \eqref{eq:g-number}, and the summation is over $\nu\in\mathcal{P}_t, 0\leq t\leq n-m$ such that $\la/\nu$ is a strip and $0\leq |\la/\nu|\leq m$. 
\end{theorem}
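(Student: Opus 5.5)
The plan is to deduce the recursion from the Frobenius formula, Theorem~\ref{thm:Frobenius}, together with the Pieri-type rule of Proposition~\ref{prop:new-DHP}. We then compare coefficients of Schur functions, which are linearly independent as long as $r>n$.

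\textbf{Step 1: factor $\widetilde{q}_\mu$.} Since $\widetilde{q}_\mu=\widetilde{q}_{\mu^-}\widetilde{q}_m$ by definition, Theorem~\ref{thm:Frobenius} applied to $\mu$ for $\mathscr{H}_n(q)$ gives
$$\widetilde{q}_\mu(x,1;q)=\sum_{k=0}^n\sum_{\la\in\mc{P}_k}s_\la(x)\,\chi^{(n)}_{(\la,k)}(\widehat{T}^{(n)}_\mu).$$

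\textbf{Step 2: expand $\widetilde{q}_{\mu^-}$ one level down.} Here $\mu^-$ is a partition of $|\mu|-m\le n-m$. Apply Theorem~\ref{thm:Frobenius} to $\mu^-$ for $\mathscr{H}_{n-m}(q)$; this is legitimate because $r>n>n-m$. It gives
$$\widetilde{q}_{\mu^-}(x,1;q)=\sum_{t=0}^{n-m}\sum_{\nu\in\mc{P}_t}s_\nu(x)\,\chi^{(n-m)}_{(\nu,t)}(\widehat{T}^{(n-m)}_{\mu^-}).$$
When $m=n$, the partition $\mu^-$ is empty and we use $\mathscr{H}_0(q)$. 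There the convention is $\widetilde{q}_\emptyset=1$ and the only character is $\chi^{(0)}_{(\emptyset,0)}=1$, which is consistent with Theorem~\ref{thm:Frobenius}.

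\textbf{Step 3: multiply and apply the Pieri rule.} Multiply the expansion of Step 2 by $\widetilde{q}_m(x,1;q)$. Each $\nu$ lies in $\cup_{k=0}^{n-m}\mc{P}_k$, so Proposition~\ref{prop:new-DHP} applies to every term $\widetilde{q}_m\,s_\nu$. It expands each such product as
$$\sum_\la \mathsf{g}_{|\la/\nu|,m}(q)\,\overline{\rm wt}_{\la/\nu}(q)\,s_\la(x),$$
where $\la$ runs over partitions with $\la/\nu$ a strip, $|\la/\nu|\le m$, and $|\la|\le n$. Interchanging the order of summation writes $\widetilde{q}_\mu(x,1;q)$ as $\sum_\la s_\la(x)\,c_\la$, where
$$c_\la=\sum_\nu \mathsf{g}_{|\la/\nu|,m}(q)\,\overline{\rm wt}_{\la/\nu}(q)\,\chi^{(n-m)}_{(\nu,t)}(\widehat{T}^{(n-m)}_{\mu^-}).$$

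\textbf{Step 4: compare coefficients.} Every partition $\la$ that occurs has $|\la|\le n<r$, so $\ell(\la)\le r$. Hence the Schur polynomials $s_\la(x_1,\ldots,x_r)$ for these $\la$ are linearly independent over $\mathbb{C}(q)$. Comparing the coefficient of $s_\la$ in Steps 1 and 3 gives $\chi^{(n)}_{(\la,k)}(\widehat{T}^{(n)}_\mu)=c_\la$, which is exactly the stated formula.

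The only delicate point is bookkeeping of the index ranges, and the key observation is that $\mathscr{H}_n(q)$ and $\mathscr{H}_{n-m}(q)$ share the same $r$. One check is that every $\la$ arising in Step 3 has $|\la|\le |\nu|+m\le n$, so it genuinely labels an irreducible of $\mathscr{H}_n(q)$. The other check is the converse: for each $\la$ with $|\la|\le n$, the admissible $\nu$ in the statement are exactly those with $|\nu|\le n-m$. That matches the hypothesis needed for Proposition~\ref{prop:new-DHP} in Step 3.
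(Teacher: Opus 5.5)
Your argument is the paper's proof. You factor $\widetilde q_\mu=\widetilde q_{\mu^-}\widetilde q_m$, expand $\widetilde q_\mu$ and $\widetilde q_{\mu^-}$ by Theorem~\ref{thm:Frobenius} at levels $n$ and $n-m$ with the same $r>n$, apply Proposition~\ref{prop:new-DHP} to each $\widetilde q_m s_\nu$, and compare coefficients of the linearly independent $s_\la$. Along the way you also repair the paper's slips ($\widetilde q_t$ for $\widetilde q_m$, and $|\la/\nu|\le n-m$ for $|\la/\nu|\le m$) and treat the case $m=n$.

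There is one real problem, and you inherit it from the paper rather than introduce it: the constants printed in Proposition~\ref{prop:new-DHP} are not what Lemmas~\ref{lem:two-symmetric} and~\ref{lem:DHP} give. Replacing $q$ by $q^{-1}$ in Lemma~\ref{lem:DHP} and multiplying by $(-q)^{m-1}$ gives the coefficient $(-q)^{m-1}\mathsf{f}_{t,m}(q^{-1})\,\overline{\rm wt}_{\la/\nu}(q)$, where $t=|\la/\nu|$. So the factor multiplying $\overline{\rm wt}_{\la/\nu}(q)$ should be $(-1)^{m-1}$, $(-1)^{m}(q-1)q^{t-1}$ and $(-q)^{m-1}$ for $t=0$, $0<t<m$ and $t=m$ respectively, taking $\overline{\rm wt}_{\nu/\nu}=1$. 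These differ from the values in \eqref{eq:g-number}.

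A concrete check shows the discrepancy. Proposition~\ref{prop:trace-wn} gives $\widetilde q_2(x_1,\ldots,x_r,1;q)=-s_{(2)}(x)+q\,s_{(1,1)}(x)+(q-1)s_{(1)}(x)-1$. Hence Theorem~\ref{thm:Frobenius} yields $\chi^{(2)}_{((2),2)}(T_1)=-1$. The stated rule with $\nu=\emptyset$ instead gives $\mathsf{g}_{2,2}(q)\,\overline{\rm wt}_{(2)}(q)=q^{-1}$.

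So your deduction is logically sound, but it establishes the recursion only with the corrected coefficients $(-q)^{m-1}\mathsf{f}_{|\la/\nu|,m}(q^{-1})$, and the same is true of the paper's proof. You should derive the Pieri step directly from Lemmas~\ref{lem:two-symmetric} and~\ref{lem:DHP} rather than quoting Proposition~\ref{prop:new-DHP} as printed.
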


\begin{proof}
By Theorem \ref{thm:Frobenius} and Lemma \ref{lem:DHP} we obtain 
\begin{align*}
&\sum_{\la\in\mc{P}_k, 0\leq k\leq n}
s_{\la}(x_1,\ldots,x_r)\chi^{(n)}_{(\la,k)}(\widehat{T}^{(n)}_{\mu})\\
&=\widetilde{q}_{\mu}(x_1,x_2,\ldots,x_r,1;q)\\
&=\widetilde{q}_{\mu^-}(x_1,x_2,\ldots,x_r,1;q)\widetilde{q}_t(x_1,x_2,\ldots,x_r,1;q)\\
&=\sum_{\nu\in\mathcal{P}_t,0\leq t\leq n-m}\chi^{(n-m)}_{(\nu,t)}(\widehat{T}^{(n-m)}_{\mu^-})s_\nu(x_1,\ldots,x_r)\widetilde{q}_t(x_1,x_2,\ldots,x_r,1;q)\\
&=\sum_{\nu\in\mathcal{P}_t,0\leq t\leq n-m}\sum_{\substack{\la\in\cup_{k=0}^n\mathcal{P}_k\\ \la/\nu\text{ is a strip}, 0\leq |\la/\nu|\leq n-m}}~\chi^{(n-m)}_{(\nu,t)}(\widehat{T}^{(n-m)}_{\mu^-}) ~\mathsf{g}_{|\la/\nu|,m}(q)~\overline{{\rm wt}}_{\la/\nu}(q)s_\la(x_1,\ldots,x_r). 
\end{align*}
As the Schur polynomials are linearly independent, the theorem is proved. 
\end{proof}

\end{document}